\theoremstyle{plain}
\newtheorem{thm}[subsection]{Theorem}
\newtheorem{prop}[subsection]{Proposition}
\newtheorem{lem}[subsection]{Lemma}
\newtheorem{cor}[subsection]{Corollary}
\theoremstyle{definition} 
\newtheorem{defn}[subsection]{Definition}
\newtheorem{rem}[subsection]{Remark}
\newtheorem*{ack}{Acknowledgments}
\begin{document}

%%%%%%%%%%%%%%%%%%%%%%%%%
% Subject classification 
%%%%%%%%%%%%%%%%%%%%%%%%%

% Provide an AMS subject classification with one or two primary classification 
% numbers and, optionally, one or more secondary classification numbers. 
% Use the following format:  "Primary 42B25. Secondary 42B60, 20E26"

\subjclass[2010]{Primary 13B22; Secondary 13H05}

\keywords{integral closure, core of a module, adjoint of an ideal}
%%%%%%%%%
% Title
%%%%%%%%%

% Title, in lower case, with no explicit linebreaks (\\).  If the title
% is too long to be used as a running head, add a short version of the
% title in brackets, as in \title[shorttitle]{fulltitle}.

\title[The core of a module and the adjoint of an ideal]{The core of a module and the adjoint of an ideal over a two dimensional regular local ring}

%%%%%%%%%%%%%%%%%%%%%%%%%%%%%%
% Author names and addresses 
%%%%%%%%%%%%%%%%%%%%%%%%%%%%%%

% Provide one separate \author{...} \address{...} \email{....} entry for each
% author, i.e., do not combine multiple authors.  Separate address lines by double
% slashes.  Do not attach footnotes to author  names. (For acknowledgements use
% the "\thanks" construct below.)
%

\author{Kohsuke Shibata}

\address{Graduate School of Mathematical Sciences, University of Tokyo, 3-8-1 %\newline
Komaba, Meguro-ku, 
Tokyo, 153-8914, Japan.}

\email{shibata@ms.u-tokyo.ac.jp}

%%%%%%%%%%%%%%%%%%%%
% Acknowledgements
%%%%%%%%%%%%%%%%%%%

% Use \thanks for acknowledgements as footnotes to the title page.  
% (Note that footnotes inside \author or \title macros are not
% allowed.)
%
% In case of multiple author papers, phrase the acknowledgement to 
% say "The first author was supported by ...  The second author was
% supported by ..."

\thanks{
The author was supported by JSPS Grant-in-Aid for Early-Career Scientists 19K14496.
}

%%%%%%%%%%%%%
% Abstract 
%%%%%%%%%%%%%
%
% Abstracts should not contain macros (so that they can be processed independently
% of the paper.) Avoid displayed math and references in the abstract.

\begin{abstract}
The core of an module is the intersection of all its reductions.
The main result asserts that the core of a finitely generated, torsion-free, integrally closed module over a two dimensional regular local ring is the product of the module and the adjoint of an ideal.
 This generalizes the fundamental formula for the core of an integrally closed ideal in a two-dimensional regular local ring due to Huneke and Swanson.
As an application, we show that
for integrally closed modules $M$ and $N$  over a two-dimensional regular local ring with $M\subset N$ and $M^{**}=N^{**}$,
 the core of  $M$ is contained in the core of $N$.
\end{abstract}

\maketitle

%%%%%%%%%%%%%%%%%%%%%%%%%%%%%%%%%%%%%%%%%%%%%%%%%%%%%%%%%%%%%%%%%%%%%%%%%
% end Topmatter
%%%%%%%%%%%%%%%%%%%%%%%%%%%%%%%%%%%%%%%%%%%%%%%%%%%%%%%%%%%%%%%%%%%%%%%%%

%%%%%%%%%%%%%%%%%%%%%%%%%%%%%%%%%%%%%%%%%%%%%%%%%%%%%%%%%%%%%%%%%%%%%%%%%
% body of paper
%%%%%%%%%%%%%%%%%%%%%%%%%%%%%%%%%%%%%%%%%%%%%%%%%%%%%%%%%%%%%%%%%%%%%%%%%

\section{Introduction}

Let $I$ be an ideal of a Noetherian ring.
An ideal $J\subset I$ is called a reduction of $I$ if there exists a positive integer $n$ such that $JI^{n}=I^{n+1}$.
The core of $I$, denoted by $\mathrm{core}(I)$, is defined to be the intersection of all reductions of $I$.
The core of an ideal was  introduced   by Rees and Sally in \cite{RS}.
Huneke and Swanson showed many properties of the core of an ideal of a $2$-dimensional regular local ring and the various relationships between the core of an ideal and the adjoint of an ideal  of a $2$-dimensional regular local ring  in \cite{HS}.
Recently, their  results were generalized to a $2$-dimensional  local ring with rational singularity by Okuma, Watanabe and Yoshida (\cite{OWY}) and the author (\cite{S}).

This paper generalizes the results in \cite{HS} in a different direction.
The notions of integral closures and reductions of finitely generated torsion-free modules were introduced by Rees in \cite{R}.
The core of a module $M$, denoted by $\mathrm{core}(M)$, is defined to be the intersection of all reductions of $M$ in the same way as for ideals.
Then it is natural to ask whether the results in \cite{HS} can be generalized to the core of a module.
In \cite{HS} Huneke and Swanson proved that  the core of an integrally closed ideal in a two-dimensional regular local ring is a product of the ideal and a certain Fitting ideal of the ideal.
This result was generalized to integrally closed modules by Mohan (\cite{M}).

In \cite{HS} Huneke and Swanson also proved that
$\mathrm{core}(I)=\mathrm{adj}(I)I$ for an  $\mathfrak m$-primary integrally closed  ideal $I$ of a $2$-dimensional regular local ring $(R,\mathfrak m)$ with infinite residue field.
Here $\mathrm{adj}(I)$ is the adjoint  of $I$.
We generalize this result to integrally closed modules.

\begin{thm}
Let $(R,\mathfrak m)$ be a $2$-dimensional regular local ring with infinite residue field and $M$ be a finitely generated integrally closed torsion-free $R$-module of  rank $r$.
Then 
$$\mathrm{core}(M)=\mathrm{adj}(I(M))M,$$
where $I(M)$ is the ideal of $R$ generated by the $r$ minors of a representing matrix for $M$. 

\end{thm}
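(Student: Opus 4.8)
The plan is to reduce the statement to the Huneke--Swanson identity $\mathrm{core}(I)=\mathrm{adj}(I)I$ for ideals, by passing from $M$ to its ideal of minors $I(M)$ and controlling the extra information carried by a minimal reduction. Write $F:=M^{**}$; since reflexive modules over a two-dimensional regular local ring are free, $F\cong R^{r}$ and $M\subseteq F$ with $F/M$ torsion, so that $I(M)=\mathrm{Fitt}_{0}(F/M)$. I treat the essential case in which $F/M$ has finite length, equivalently $I(M)$ is $\mathfrak m$-primary (the hypothesis under which the ideal-theoretic statement of \cite{HS} is formulated). Because the residue field is infinite, the Buchsbaum--Rim analytic spread of $M$ is $\dim R+r-1=r+1$, so a minimal reduction $U$ of $M$ is generated by $r+1$ sufficiently general elements of $M$, and the associated maximal minors generate $I(U)\subseteq I(M)$.

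The first step is to express the core through a single general minimal reduction, in the spirit of Mohan's Fitting-ideal formula. I would show that, because $M$ is integrally closed, the colon ideal $(U:_{R}M):=\{a\in R:aM\subseteq U\}$ is independent of the minimal reduction $U$, and that
\[
\mathrm{core}(M)=(U:_{R}M)\,M .
\]
The inclusion $(U:_{R}M)M\subseteq\mathrm{core}(M)$ follows once the colon is known to be reduction-independent: every reduction $V$ of $M$ contains a minimal reduction $U'$, and then $(U:_{R}M)M=(U':_{R}M)M\subseteq U'\subseteq V$. The reverse inclusion is obtained, as in the ideal case of \cite{HS}, by producing enough general minimal reductions whose intersection is no larger than $(U:_{R}M)M$; integral closedness of $M$ is exactly what makes the colon stabilize and this argument go through.

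The second and decisive step is the identification
\[
(U:_{R}M)=\mathrm{adj}\bigl(I(M)\bigr).
\]
Here I would use the structure theory of complete modules over a two-dimensional regular local ring: the ideal of maximal minors of an integrally closed module is again integrally closed, and $I(U)$ is a minimal reduction of the $\mathfrak m$-primary ideal $I(M)$. The key claim is the transfer of the colon from the module to its minors,
\[
(U:_{R}M)=\bigl(I(U):_{R}I(M)\bigr),
\]
after which the Huneke--Swanson theorem applied to the $\mathfrak m$-primary integrally closed ideal $I(M)$ gives $\bigl(I(U):_{R}I(M)\bigr)=\mathrm{adj}(I(M))$. Combining the two steps yields $\mathrm{core}(M)=(U:_{R}M)M=\mathrm{adj}(I(M))M$.

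The main obstacle is precisely this colon-transfer identity, together with the accompanying claim that passing to maximal minors sends a minimal reduction $U$ of $M$ to a minimal reduction $I(U)$ of $I(M)$ while preserving integral closure. These rest on the compatibility between the Buchsbaum--Rim theory of $M\subseteq F$ and the Rees theory of $I(M)$: integral dependence over $M$ should be detected by the same divisorial valuations that detect integral dependence over $I(M)$, so that reductions, integral closures, and the resulting colon ideals correspond. Establishing this correspondence --- for which the two-dimensionality and regularity of $R$ are essential, since they make the relevant complete modules and complete ideals factor compatibly --- is where the bulk of the technical work will lie. Once it is in place, the reduction-independence of the colon in the first step and the Huneke--Swanson identity in the second step combine to give the theorem.
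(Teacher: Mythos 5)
Your first step is sound, but it is not new work: the identity $\mathrm{core}(M)=(U:_RM)M$, with $(U:_RM)$ independent of the chosen minimal reduction $U$ and equal to the Fitting-type ideal $I_{n-r-1}(A)$ of a presenting matrix $A$, is exactly Mohan's theorem (the paper's Theorem \ref{thm in M} together with Corollary 2.6 of \cite{M}), and the paper relies on the same input. The genuine gap is your second step. Both the claim that $I(U)$ is a \emph{minimal} reduction of $I(M)$ and the colon-transfer identity $(U:_RM)=\bigl(I(U):_RI(M)\bigr)$ are false once $r\ge 2$. Take $R=k[[x,y]]$, $\mathfrak m=(x,y)$, and $M=\mathfrak m\oplus\mathfrak m$, so $r=2$ and $I(M)=\mathfrak m^2$. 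By Mohan's Corollary 2.6, $(U:_RM)=I_1(A)=\mathfrak m$ for every minimal reduction $U$, where $A$ is the $4\times 2$ presenting matrix of $M$. A minimal reduction $U$ is generated by three general elements of $M$, i.e.\ by the columns of a $2\times 3$ matrix $B$ with entries in $\mathfrak m$, and $I(U)=I_2(B)$; for a general choice (e.g.\ columns $(x,y),(y,x),(x+y,x-y)$) the three $2\times 2$ minors span all quadrics modulo $\mathfrak m^3$, so $I(U)=\mathfrak m^2=I(M)$ and $\bigl(I(U):_RI(M)\bigr)=R\neq\mathfrak m=(U:_RM)$. Moreover no clever choice of $U$ can repair this: since $UV=MV$ for every DVR $V$ between $R$ and $K$, Fitting ideals commute with base change and hence $I(U)$ is a reduction of $\mathfrak m^2$, in particular $\mathfrak m$-primary of grade $2$; then Hilbert--Burch makes $0\to R^2\xrightarrow{B^{t}}R^3\to R\to R/I(U)\to 0$ a minimal free resolution, so $\nu_R(I(U))=3$. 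Thus $I(U)$ is never a $2$-generated (i.e.\ minimal) reduction of $I(M)$, and the Huneke--Swanson/Lipman formula $(J:_R\mathfrak a)=\mathrm{adj}(\mathfrak a)$, which requires $J$ to be a minimal reduction, can never be applied to the pair $I(U)\subset I(M)$. In short, for $r\ge 2$ the passage $U\mapsto I(U)$ inflates a module with $r+1$ generators into an ideal with $r+1>2$ generators, so taking maximal minors does not transfer minimal reductions of modules to minimal reductions of ideals; this is precisely the information your reduction to the ideal case destroys.

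For comparison, the paper's proof of the key identity $\mathrm{adj}(I(M))=(U:_RM)=I_{n-r-1}(A)$ (Theorem \ref{Main}) does not go through reductions of $I(M)$ at all: it argues by induction on the Buchsbaum--Rim multiplicity $e(M^{**}/M)$, passing to quadratic transforms $R[\frac{\mathfrak m}{x}]$ localized at maximal ideals, where Kodiyalam's results control $I(M)$, the presenting matrix, and integral closedness, and Lipman's localization formula controls the adjoint; contraction back to $R$ via Proposition \ref{intersection} then yields the identity, and the theorem follows by combining with Mohan's formula. The correct ideal-theoretic avatar of $(U:_RM)$ is thus the Fitting-type ideal $I_{n-r-1}(A)$ of a presenting matrix, not a colon of ideals of minors, and identifying it with $\mathrm{adj}(I(M))$ is where the real work of the paper lies.
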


As an application of the theorem, we show the following proposition, which generalizes Corollary 3.13 in \cite{HS} to modules.

\begin{prop}
Let $(R,\mathfrak m)$ be a $2$-dimensional regular local ring with infinite residue field and $M,N$ be  finitely generated integrally closed torsion-free $R$-modules.
We assume that  $M\subset N$ and $M^{**}=N^{**}$, where $(-)^*$ denotes the functor $\mathrm{Hom}_R(-,R)$.
Then 
$\mathrm{core}(M)\subset \mathrm{core}(N)$.
\end{prop}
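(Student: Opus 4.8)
The plan is to reduce the statement to a comparison of the two factors appearing in the core formula of the Theorem. Applying the Theorem to $M$ and to $N$ gives $\mathrm{core}(M)=\mathrm{adj}(I(M))M$ and $\mathrm{core}(N)=\mathrm{adj}(I(N))N$, so it suffices to prove $\mathrm{adj}(I(M))M\subset \mathrm{adj}(I(N))N$. Since $R$ is a two-dimensional regular local ring, every finitely generated reflexive $R$-module is free; hence $F:=M^{**}=N^{**}$ is free of rank $r$, and the hypothesis $M^{**}=N^{**}$ lets me regard $M$ and $N$ as submodules of one and the same free module $F$ with $M\subset N\subset F$.

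First I would compare the minor ideals. Choose generators of $M$ and of $N$ inside $F$ and record their coordinate vectors as the columns of matrices $A$ and $B$, each having $r$ rows, so that $I(M)=I_r(A)$ and $I(N)=I_r(B)$ are the ideals generated by the $r\times r$ minors (independence of this ideal from the chosen generators being the standard invariance of these minor ideals). Because $M\subset N$, every column of $A$ is an $R$-linear combination of the columns of $B$, i.e.\ $A=BC$ for some matrix $C$. The Cauchy--Binet formula then expresses each maximal minor of $A$ as an $R$-linear combination of the maximal minors of $B$, whence $I(M)=I_r(A)\subset I_r(B)=I(N)$.

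Next I would invoke the monotonicity of the adjoint: for ideals $I\subset J$ of $R$ one has $\mathrm{adj}(I)\subset \mathrm{adj}(J)$, which follows from the valuative description of the adjoint, since the discrepancy term there is independent of the ideal. Applying this to $I(M)\subset I(N)$ gives $\mathrm{adj}(I(M))\subset \mathrm{adj}(I(N))$. Combining the two inclusions yields $\mathrm{adj}(I(M))M\subset \mathrm{adj}(I(N))M\subset \mathrm{adj}(I(N))N$, and together with the core formula this gives $\mathrm{core}(M)\subset\mathrm{core}(N)$.

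The main obstacle is the first step, namely establishing $I(M)\subset I(N)$: this is precisely where the hypothesis $M^{**}=N^{**}$ is essential, since it is what allows the representing matrices of $M$ and $N$ to be taken with a common target free module $F$ and hence to be compared through the factorization $A=BC$. Without this hypothesis the two minor ideals would be computed in unrelated ambient free modules and no inclusion could be expected; for instance, the rank of the two modules could differ, so that $I(M)$ and $I(N)$ would not even involve minors of the same size. Once $I(M)\subset I(N)$ is in hand, the remaining steps are formal.
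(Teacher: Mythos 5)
Your proposal is correct and follows essentially the same route as the paper: reduce to the inclusion $I(M)\subset I(N)$, apply monotonicity of the adjoint, and conclude via the formula $\mathrm{core}(M)=\mathrm{adj}(I(M))M$. The only difference is that the paper asserts $I(M)\subset I(N)$ without comment, whereas you spell out the standard justification (common ambient free module $F=M^{**}=N^{**}$, factorization $A=BC$, Cauchy--Binet), which is a useful but routine elaboration rather than a different argument.
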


In Section $2$, we give necessary definitions and record various properties for later use.
In Section $3$, we determine the adjoint of the ideal of $R$ generated by the  minors of a representing matrix for  an integrally closed module $M$ over a $2$-dimensional regular local ring in terms of ideals of minors of any presentation of $M$ and also prove several properties of the adjoint.
In Section $4$, we prove the main theorem that relates the core of a module and the adjoint of an ideal of a $2$-dimensional regular local ring.
We also generalize several results  in \cite{HS} to integrally closed modules.

\begin{ack}
I am grateful to Prof. Shunsuke Takagi for the constant encouragements and many comments.
I also thank Futoshi Hayasaka for his helpful comments.
\end{ack}

\section{Preliminaries}
In this section, we give necessary definitions and record various properties for later use.

\noindent
{\bf \emph{Integral Closures, Reductions and Cores of  Modules}}

Let $R$ be a Noetherian domain and $K$ be its field of fractions.
Let $M$ be a finitely generated, torsion-free $R$-module.
By $M_K$ we denote the finite-dimensional $K$-vector space $M\otimes_R K$.
For any ring $S$ with $R\subset S \subset K$, we let $MS$ denote the $S$-submodule of $M_K$ generated by $M$. 
Let $S(M)$ denote the image of the symmetric algebra $\mathrm{Sym}^R(M)$ in the algebra $\mathrm{Sym}^K(M_K)$ under the canonical map.
Let $\mathrm{Sym}_n^K(M_K)$ (respectively $S_n(M)$) be the $n$th graded component of $\mathrm{Sym}^K(M_K)$ (respectively $S(M)$).

\begin{defn}
With notation as above, an element $v\in M_K$ is said to be integral over $M$ if $v\in MV$ for every discrete valuation ring $V$ of $K$ containing $R$.
The integral closure of $M$, denoted $\overline{M}$, is the set of all elements of $M_K$ that are integral over $M$.
The module $M$ is said to be integrally closed  if $M=\overline M$.
A submodule $N$ of $M$ is a reduction of $M$ if $M\subset \overline N$.
A reduction of $M$ is said to be minimal if it is minimal with respect to inclusion.
\end{defn}

\begin{thm}{\rm(Theorem 1.5 in \cite{R})}\label{Rees theorem1}
Let $R$ be a Noetherian domain and  let $M$ be a finitely generated torsion-free $R$-module of rank $r$.
For an element $v\in M_K$, the following are equivalent:
\begin{enumerate}
\item[{\rm(1)}]
The element $v$ is integral over $M$.

\item[{\rm(2)}]
 The element $v\in M_K=\mathrm{Sym}^K_1(M_K)$ is integral over $S(M)$.
\end{enumerate}

\end{thm}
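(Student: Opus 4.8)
The plan is to read both conditions off the graded inclusion $S(M)\subseteq \mathrm{Sym}^K(M_K)$ and then run a valuative argument on each side. Writing $\mathrm{Sym}^K(M_K)=K[x_1,\dots,x_r]$, I first record that $S(M)$ is a standard graded $R$-subalgebra which is a finitely generated $R$-algebra (being a quotient of $\mathrm{Sym}^R(M)$), hence Noetherian, with $S_1(M)=M$ and fraction field $K(x_1,\dots,x_r)$, since $M$ spans $M_K$ over $K$. Condition (2) then says precisely that $v$ lies in the integral closure $\overline{S(M)}$ of $S(M)$ inside this fraction field; as $\overline{S(M)}$ is a graded subring and $v$ is homogeneous of degree $1$, this is the statement I must match against condition (1).

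For the implication $(2)\Rightarrow(1)$ I would fix a DVR $V$ of $K$ containing $R$. Since $MV$ is a finitely generated torsion-free module over the principal ideal domain $V$, it is free of rank $r$, so $S(MV)$ is a polynomial ring $V[y_1,\dots,y_r]$ over $V$ and is therefore integrally closed in $K(x_1,\dots,x_r)$. From $S(M)\subseteq S(MV)$ and the integrality of $v$ over $S(M)$, the element $v$ is integral over the normal ring $S(MV)$ and so lies in it; reading off the degree-one part gives $v\in MV$. As $V$ is arbitrary, $v$ is integral over $M$.

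For the converse $(1)\Rightarrow(2)$ I would use that, because $S(M)$ is Noetherian, $\overline{S(M)}$ is the intersection of all DVRs $W$ with $S(M)\subseteq W\subseteq K(x_1,\dots,x_r)$, so it suffices to prove $v\in W$ for each such $W$. The key step is to restrict $W$ to the subfield $K$: the value group of $V':=W\cap K$ is a subgroup of the value group $\mathbb{Z}$ of $W$, hence is either trivial or isomorphic to $\mathbb{Z}$. In the first case $K\subseteq W$, so $W$ contains the $K$-span $K\cdot S(M)=K[x_1,\dots,x_r]$ and in particular $v$. In the second case $V'$ is a DVR of $K$ containing $R$, so hypothesis (1) yields $v\in MV'$; since $V'\subseteq W$ and $M\subseteq W$ we get $S(MV')\subseteq W$, whence $v\in MV'\subseteq W$. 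Either way $v\in W$, which gives (2).

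The easy half is $(2)\Rightarrow(1)$, driven entirely by the normality of a polynomial ring over a DVR. The main obstacle is the converse, specifically the passage from the valuative hypothesis over DVRs of $K$ to integral dependence over $S(M)$ over the larger field. The crucial point that unlocks this is that the restriction $W\cap K$ of a DVR of $K(x_1,\dots,x_r)$ is again a DVR of $K$ (or all of $K$), which is exactly what licenses the application of hypothesis (1). I would also take care to justify that $\overline{S(M)}$ may be tested on DVRs rather than arbitrary valuation rings; this is where Noetherianity of $S(M)$ is used, via the existence of a DVR dominating a suitable localisation whenever an element fails to be integral.
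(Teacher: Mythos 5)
The paper does not prove this statement at all: it is quoted verbatim as Theorem 1.5 of Rees's \emph{Reduction of modules} [R], so there is no in-paper proof to compare against, and your argument must be judged against Rees's original one. Your proof is correct, and it is essentially the classical valuative argument. Both halves check out: for $(2)\Rightarrow(1)$, $MV$ is finitely generated torsion-free over the PID $V$, hence free of rank $r$, so $S(MV)\cong V[y_1,\dots,y_r]$ is normal, and degree-one membership in it is exactly $v\in MV$; for $(1)\Rightarrow(2)$, the reduction of $\overline{S(M)}$ to an intersection of DVRs of $K(x_1,\dots,x_r)$ is legitimate precisely because $S(M)$, being a finitely generated algebra over the Noetherian ring $R$, is Noetherian (you correctly flag that this is where Noetherianity enters, via Krull--Akizuki-type existence of dominating DVRs), and the dichotomy on the value group of $W\cap K$ (trivial, whence $K\subseteq W$ and $W\supseteq K\cdot S(M)=K[x_1,\dots,x_r]$; or infinite cyclic, whence $W\cap K$ is a DVR of $K$ containing $R$ and hypothesis (1) applies) is sound, using that $S_n(M)$ spans $\mathrm{Sym}_n^K(M_K)$ over $K$. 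The only stylistic divergence from Rees is in the easy direction: Rees organizes it through the Gauss (generic) extension of a valuation $v$ on $K$ to $K(x_1,\dots,x_r)$ taken with respect to a $V$-basis of $MV$, whereas you invoke normality of the polynomial ring $V[y_1,\dots,y_r]$; these are the same computation in different clothing, since nonnegativity of the Gauss value in degree one is exactly membership in $MV$. Your version is slightly more self-contained, Rees's makes the extended valuations available for later use.
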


\begin{thm}{\rm(Lemma 2.1 in \cite{R})}\label{Rees theorem2}
Let $R$ be a $d$-dimensional Noetherian  local domain with infinite residue field and  let $M$ be a non-free finitely generated torsion-free $R$-module of rank $r$.
Then $M$ has a minimal reduction which is generated by at most $r+d-1$ elements.
Further, a minimal generating set of a minimal reduction of $M$ forms part of a minimal generating set for $M$.
In particular, when $d=2$, $M$ has an $r+1$ generated minimal reduction.

\end{thm}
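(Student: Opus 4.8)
The plan is to work with the special fiber ring of $M$ and to read the existence of a small minimal reduction, together with the generator bound, off its Krull dimension. Write $k=R/\mathfrak m$, which is infinite by hypothesis, and form the standard graded $k$-algebra $\mathcal F=S(M)/\mathfrak m S(M)$; its degree-one component is the $k$-vector space $M/\mathfrak m M$, so $\mathcal F$ is generated in degree one, and I set $\ell=\dim\mathcal F$. The first step is to record the analogue of the Northcott--Rees correspondence in this setting: for $x_1,\dots,x_s\in M$, the submodule $N=(x_1,\dots,x_s)$ is a reduction of $M$, that is $M\subset\overline N$, if and only if $S(M)$ is integral over the $R$-subalgebra generated by $x_1,\dots,x_s$, a reformulation made possible by Theorem \ref{Rees theorem1}; reducing modulo $\mathfrak m$ this happens exactly when the images $\bar x_1,\dots,\bar x_s$ in $\mathcal F_1=M/\mathfrak m M$ generate an ideal of $\mathcal F$ primary to its irrelevant maximal ideal, equivalently when $\mathcal F$ is module-finite over $k[\bar x_1,\dots,\bar x_s]$.

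Granting this, the minimal reduction is produced by graded Noether normalization. Since $\mathcal F$ is standard graded and generated in degree one over the infinite field $k$, I can choose $\ell$ general linear forms $\bar x_1,\dots,\bar x_\ell\in\mathcal F_1$ forming a homogeneous system of parameters, lift them to elements $x_1,\dots,x_\ell\in M$, and conclude from the correspondence that $N=(x_1,\dots,x_\ell)$ is a reduction generated by $\ell$ elements. A linear homogeneous system of parameters is $k$-linearly independent in $M/\mathfrak m M$, so by Nakayama's lemma $x_1,\dots,x_\ell$ form part of a minimal generating set of $M$; and since the images of the generators of any reduction must generate an ideal primary to the irrelevant maximal ideal of $\mathcal F$, no reduction can have fewer than $\ell=\dim\mathcal F$ generators. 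Hence $N$ is a minimal reduction, minimally generated by $\ell$ elements, with the asserted ``part of a minimal generating set'' property.

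It remains to show $\ell\le r+d-1$, which I expect to be the crux and the only place the numerology $r+d-1$ enters. The key input is that $S(M)$ is a domain of dimension $d+r$: by construction it is the image of $\mathrm{Sym}^R(M)$ in $\mathrm{Sym}^K(M_K)=K[T_1,\dots,T_r]$, hence an $R$-subalgebra of a domain, its generic fibre $S(M)\otimes_R K$ is the $r$-dimensional polynomial ring $K[T_1,\dots,T_r]$, and the fibre-dimension formula for a finitely generated domain over the $d$-dimensional base $R$ gives $\dim S(M)=d+r$ (cleanly so when $R$ is universally catenary, in particular in the regular case of interest). Because $M$ is not free, $R$ is not a field, so $\mathfrak m\neq 0$ and $\mathfrak m S(M)$ is a nonzero ideal of the domain $S(M)$; every prime $P$ minimal over it therefore has height at least one, and the elementary inequality $\mathrm{ht}\,P+\dim S(M)/P\le\dim S(M)$ yields $\ell=\dim\mathcal F=\dim S(M)/\mathfrak m S(M)\le\dim S(M)-1=d+r-1$. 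Specializing to $d=2$ gives a minimal reduction on at most $r+1$ generators. The delicate points to secure are the exact value $\dim S(M)=d+r$ (the transcendence-degree computation, which over a general Noetherian base must be argued through the dimension formula for Rees algebras of modules rather than by naive subring comparison, since a subring may have larger dimension than its overring) and the genericity underlying the Noether-normalization step, for which the infiniteness of $k$ is indispensable.
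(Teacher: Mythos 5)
The paper offers no proof of this statement at all: it is quoted verbatim from Rees (Lemma~2.1 in \cite{R}), and your strategy --- the fiber ring $\mathcal F=S(M)/\mathfrak m S(M)$, the reduction criterion via Theorem~\ref{Rees theorem1}, graded Noether normalization over the infinite residue field, and a dimension count on $S(M)$ --- is precisely the classical route behind Rees's lemma, so in approach you are on target. One point you flag as delicate is in fact a non-issue: you never need the exact value $\dim S(M)=d+r$, only the upper bound $\dim S(M)\le d+r$, and this holds over an arbitrary Noetherian local domain, with no catenarity hypothesis, by the dimension inequality for finitely generated extensions of Noetherian domains ($\mathrm{ht}\,Q\le \mathrm{ht}(Q\cap R)+\mathrm{trdeg}_R S(M)$, see Matsumura, \emph{Commutative Ring Theory}, Theorem~15.5), since $S(M)$ is a domain of transcendence degree $r$ over $R$ via its graded embedding in $K[T_1,\dots,T_r]$. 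Combined with $\dim S(M)/P+\mathrm{ht}\,P\le\dim S(M)$ and $\mathrm{ht}(\mathfrak m S(M))\ge 1$ (as $\mathfrak m\ne 0$ maps into the domain $S(M)$), this gives $\ell\le r+d-1$ exactly as you want, so your hedge about universal catenarity can simply be deleted.

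The one genuine gap is your deduction that $N=(x_1,\dots,x_\ell)$ is a \emph{minimal} reduction. From ``no reduction of $M$ has fewer than $\ell$ generators'' you cannot conclude minimality with respect to inclusion: a proper submodule $N'\subsetneq N$ that is still a reduction of $M$ could a priori require \emph{more} than $\ell$ generators, since $\nu_R(-)$ is not monotone under inclusion, so the generator count alone rules nothing out. The standard repair runs as follows. Because $\bar x_1,\dots,\bar x_\ell$ form a linear homogeneous system of parameters of the $\ell$-dimensional standard graded ring $\mathcal F$, the subalgebra $k[\bar x_1,\dots,\bar x_\ell]$ has dimension $\ell$ and hence is a polynomial ring; it follows (a surjective endomorphism of a Noetherian ring being injective) that the fiber ring $S(N)/\mathfrak m S(N)$ is itself a polynomial ring in $\ell$ variables with degree-one piece $N/\mathfrak m N$. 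Now if $N'\subset N$ is any reduction of $M$, then $\overline{N'}\supset M\supset N$, so $N'$ is a reduction of $N$, and by your own correspondence the image of $N'$ in $N/\mathfrak m N$ must generate an ideal primary to the irrelevant ideal of this polynomial ring; linear forms generating such an ideal must span the whole degree-one component, whence $N'+\mathfrak m N=N$ and $N'=N$ by Nakayama. Relatedly, note that you prove the assertion ``a minimal generating set of a minimal reduction forms part of a minimal generating set of $M$'' only for the particular $N$ you construct; for an arbitrary minimal reduction $N'$ you must rerun the normalization inside the span of the image of $N'$ in $\mathcal F_1$ (again using that $k$ is infinite) to produce an $\ell$-generated reduction $N''\subset N'$ with the stated property, and then invoke minimality of $N'$ to force $N''=N'$. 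Both repairs are routine, but as written neither the minimality of $N$ nor the universal form of the last assertion is actually established.
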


\begin{thm}{\rm(Theorem 5.2  in \cite{K})}\label{thm 5.2 in K}
Let $(R,\mathfrak m)$ be a $2$-dimensional regular  local ring with infinite residue field and  $M$ and $N$ be   finitely generated torsion-free integrally closed $R$-modules.
Then   $MN$ is integrally closed. 
In particular for an integrally closed ideal $\mathfrak a$ of $R$, $\mathfrak aM$ is integrally closed.
\end{thm}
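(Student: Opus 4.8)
The plan is to recast the statement valuatively and then import Zariski's classical theorem that the product of two complete ideals in a two-dimensional regular local ring is complete. First I would use that $R$ is two-dimensional and regular to realize $M$ and $N$ as full-rank submodules of free modules: the double duals $M^{**}$ and $N^{**}$ are reflexive, hence maximal Cohen--Macaulay over the regular local ring $R$, hence free, so I may assume $M\subset F$ and $N\subset G$ with $F,G$ free and $M_K=F_K$, $N_K=G_K$. Then $MN$ is the image of $M\otimes_R N$ in $F_K\otimes_K G_K$. By the valuative definition of integral closure one has $\overline{MN}=\bigcap_V (MN)V$, the intersection taken inside $(MN)_K$ over all discrete valuation rings $V$ with $R\subset V\subset K$; and since forming products commutes with extension of scalars, $(MN)V=(MV)(NV)$ for each such $V$. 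As $MN\subset\overline{MN}$ always holds, the theorem reduces to showing that the coupled condition $w\in\bigcap_V (MV)(NV)$ forces $w\in MN$, i.e. that no spurious integral elements appear. Taking $N=\mathfrak a$, a rank-one integrally closed module, will then yield the final assertion that $\mathfrak aM$ is integrally closed.

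To rule out spurious elements I would follow Zariski's strategy by quadratic transforms, adapted to modules. Choosing $x\in\mathfrak m\setminus\mathfrak m^2$ and passing to the localizations of the blowup $R[\mathfrak m/x]$, which are again two-dimensional regular local rings, I would define the transform $M'$ of $M$ by extending to the blowup and dividing out the power of the exceptional divisor prescribed by the order of $M$ (equivalently the order of its ideal of minors $I(M)$), and likewise for $N$. The two lemmas to prove are: (i) the transform of a product is the product of the transforms, $(MN)'=M'N'$, with orders adding; and (ii) a module is integrally closed if and only if it is contracted from $R[\mathfrak m/x]$ for general $x$ and its transform is integrally closed. Granting these, one inducts on a numerical invariant that strictly drops under transform, such as the Buchsbaum--Rim multiplicity, the base case being when one factor is free: there $MN$ is a direct sum of copies of the other factor, and completeness follows because integral closure commutes with finite direct sums.

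A complementary route reduces the rank to one through exterior powers, which also ties the argument to the paper's emphasis on the minors ideal. From $M\subset F$ and $N\subset G$ one has $\wedge^{rs}(M\otimes N)\cong (\wedge^r M)^{\otimes s}\otimes(\wedge^s N)^{\otimes r}$ sitting inside $\wedge^{rs}(F\otimes G)\cong R$, so that $I(MN)=I(M)^{s}I(N)^{r}$ up to integral closure. Zariski's product theorem then shows that this ideal of minors is complete as soon as $I(M)$ and $I(N)$ are. The remaining step is to promote completeness of the minors ideal to completeness of the module itself.

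I expect the main obstacle to be exactly this promotion, equivalently the module analogue (ii) of Zariski's contraction lemma together with the very construction of a module transform for which transforms multiply. The valuative reduction in the first paragraph is formal, but the assertion that a contracted module with complete transform is complete relies essentially on $\dim R=2$ and on $R$ being regular, and fails in higher dimension or at singular points. I would therefore anticipate that most of the effort goes into establishing these structural facts about complete modules over a two-dimensional regular local ring, after which the product theorem, and with it the special case $\mathfrak aM$, follows from the induction sketched above.
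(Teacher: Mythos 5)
This statement is not proved in the paper at all: it is quoted verbatim as Theorem 5.2 of Kodiyalam \cite{K}, and your plan --- embed $M$ and $N$ in the free modules $M^{**}$, $N^{**}$, reduce valuatively, then run a Zariski-style induction on the Buchsbaum--Rim multiplicity using contractedness from a quadratic transform and multiplicativity of transforms on products --- is essentially Kodiyalam's own argument, whose key ingredients even appear in this paper as Proposition \ref{properties of module in K} (the contractedness criterion $\mathrm{ord}_R(I(M))=\nu_R(M)-\mathrm{rank}_R(M)$ and integral closedness of $MT$) and Proposition \ref{buchsbaum rim} (strict drop of $e(M^{**}/M)$ under transform). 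So your approach coincides with that of the cited source, and the two lemmas you correctly single out as the real work, your (i) and (ii), are precisely the structural results of Sections 4 and 5 of \cite{K}.
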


\begin{defn}
Let $R$ be a  Noetherian   domain  and  let $M$ be a  finitely generated torsion-free $R$-module.
The core of $M$, denoted by $\mathrm{core}(M)$, is the intersection of all reductions of $M$.

\end{defn}

\noindent
{\bf \emph{Adjoints of  ideals}}

We will now review the definition of the adjoint of an ideal.

\begin{defn}
 Let $R$ be a regular domain with field of fractions $K$.
The adjoint of an ideal $I$ in $R$, denoted $\mathrm{adj}(I)$, is the ideal
$$\mathrm{adj}(I)=\bigcap_{v}\{r\in K|\ v(r)\ge v(I)-v(J_{R_v/R})\},$$
where the intersection varies over all  divisorial valuation with respect to $R$.
Here $R_v$ denotes the corresponding valuation ring for $v$ and $J_{R_v/R}$ denotes the Jacobian ideal of $R_v$ over $R$.   
\end{defn}

Here we recall some basic properties of adjoints of  ideals.

\begin{prop} {\rm(Lemma 18.1.2, Lemma 18.1.3 and Proposition 18.3.2 in \cite{HS book})}\label{basic properties}
Let $R$ be a regular  domain, $x$ an element in $R$ and $I, J$  ideals of $R$.
\begin{enumerate}
\item[{\rm(1)}]
$I\subset \overline I\subset \mathrm{adj}(I)=\mathrm{adj}(\overline I)$ and $\mathrm{adj}(I)$ is an integrally closed ideal of $R$.

\item[{\rm(2)}]
 $\mathrm{adj}(xI)=x\mathrm{adj}(I)$.

\item[{\rm(3)}]
If $I\subset J$, then $\mathrm{adj}(I)\subset \mathrm{adj}(J)$.

\item[{\rm(4)}] 
If $ J$ is minimally generated by $(x_1,\dots,x_l)$,  $\mathrm{ht}(J)=l$ and $R/J$ is regular,
then 
$$\mathrm{adj}(I)=\Biggl(\bigcap_{i=1}^l\frac{1}{x_i^{l-1}}\mathrm{adj}\Bigl(IR[\frac{J}{x_i}]\Bigr)\Biggr)\cap R.$$   
\end{enumerate}

\end{prop}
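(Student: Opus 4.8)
Each of the four assertions can be obtained directly from the valuative definition of $\mathrm{adj}(I)$, so the plan is to rephrase every statement as a condition on the values $v(-)$ as $v$ ranges over the divisorial valuations of $R$, and to invoke two standard facts. First, for each such $v$ the Jacobian ideal $J_{R_v/R}$ is a nonzero ideal of $R_v$, so $v(J_{R_v/R})\ge 0$, with $v(J_{R_v/R})=0$ for the height-one valuations coming from the Krull structure of $R$. Second, $v(I)=v(\overline I)$ for every divisorial $v$, and more generally $\overline I=\bigcap_v\{r\in K:\ v(r)\ge v(I)\}$. With these in hand all the work is bookkeeping on valuations.

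For (1), $I\subseteq\overline I$ is the definition, and since $v(J_{R_v/R})\ge 0$ gives $v(I)-v(J_{R_v/R})\le v(I)$, any $r$ satisfying $v(r)\ge v(I)$ for all $v$ a fortiori satisfies the weaker adjoint inequalities; hence $\overline I\subseteq\mathrm{adj}(I)$. Testing against the height-one valuations of $R$, where the Jacobian is trivial, shows $\mathrm{adj}(I)\subseteq\bigcap_{\mathrm{ht}=1}R_{\mathfrak p}=R$, so $\mathrm{adj}(I)$ is an ideal; the equality $\mathrm{adj}(I)=\mathrm{adj}(\overline I)$ is then immediate from $v(I)=v(\overline I)$, and integral closedness follows because each defining set $\{r:\ v(r)\ge v(I)-v(J_{R_v/R})\}$ is a valuation (hence integrally closed) ideal and an arbitrary intersection of integrally closed ideals is integrally closed. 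For (2), I would use $v(xI)=v(x)+v(I)$, so $v(r)\ge v(xI)-v(J_{R_v/R})$ is equivalent to $v(r/x)\ge v(I)-v(J_{R_v/R})$; intersecting over $v$ yields $\mathrm{adj}(xI)=x\,\mathrm{adj}(I)$. For (3), $I\subseteq J$ forces $v(I)\ge v(J)$ for every $v$, so the adjoint inequalities for $I$ imply those for $J$, giving $\mathrm{adj}(I)\subseteq\mathrm{adj}(J)$.

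Part (4) is the substantive one and is where I expect the real difficulty. The plan is to compute the adjoint after passing to the blowup of $J$: since $J=(x_1,\dots,x_l)$ is generated by a regular sequence with $R/J$ regular and $\mathrm{ht}(J)=l$, the blowup is smooth and is covered by the charts $\mathrm{Spec}\,A_i$ with $A_i:=R[J/x_i]$, on each of which the exceptional divisor is $V(x_i)$ and the relative Jacobian ideal is $J_{A_i/R}=(x_i^{\,l-1})$. The key identity is the transitivity of Jacobians (the different), namely $v(J_{R_v/R})=v(J_{R_v/A_i})+(l-1)v(x_i)$, which I would use to show that for a valuation $v$ centered on the $i$th chart the condition $r\in\frac{1}{x_i^{\,l-1}}\mathrm{adj}(IA_i)$, i.e. $v(r)+(l-1)v(x_i)\ge v(I)-v(J_{R_v/A_i})$, is exactly the defining adjoint inequality $v(r)\ge v(I)-v(J_{R_v/R})$ over $R$. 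Since every divisorial valuation of $R$ is centered on some chart, intersecting the contracted chart-wise adjoints recovers $\mathrm{adj}(I)$. The main obstacle is precisely this step: verifying the Jacobian chain rule in the form $v(J_{A_i/R})=(l-1)v(x_i)$ and confirming that the charts of the blowup account for \emph{all} divisorial valuations, so that the intersection $\bigl(\bigcap_i \frac{1}{x_i^{\,l-1}}\mathrm{adj}(IA_i)\bigr)\cap R$ returns $\mathrm{adj}(I)$ and nothing larger — which is the content reduced to, and cited from, Proposition 18.3.2 in \cite{HS book}.
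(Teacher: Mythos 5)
The paper offers no proof of this proposition at all: it is quoted as a known result, with the bracketed reference to Lemmas 18.1.2, 18.1.3 and Proposition 18.3.2 of \cite{HS book} standing in for a proof. So your proposal can only be judged on its own merits, and on those terms it is essentially correct, and in fact goes further than the paper does. Your valuative arguments for (1)--(3) are complete: the only inputs are $v(J_{R_v/R})\ge 0$, the valuative description of integral closure (valid because a regular domain is Noetherian and normal, so $\overline I$ is cut out by divisorial valuations and $\bigcap_{\mathrm{ht}\,\mathfrak p =1}R_{\mathfrak p}=R$), and the identity $v(I)=v(\overline I)$, all of which you invoke correctly; note in (3) that the inequality runs the right way, since $I\subset J$ gives $v(I)\ge v(J)$ and hence $v(I)-v(J_{R_v/R})\ge v(J)-v(J_{R_v/R})$. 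For (4), your outline is the standard argument (and surely the one in the cited book): the charts $A_i=R[J/x_i]$ are regular because $J$ is cut out by a regular sequence with $R/J$ regular; moreover $A_i\cong R[T_j:j\ne i]/(x_iT_j-x_j:j\ne i)$, so the relative Jacobian matrix is diagonal with entries $x_i$ and $J_{A_i/R}=(x_i^{l-1})$ --- a computation you flagged as an obstacle but could have completed in two lines. Granting the transitivity $v(J_{R_v/R})=v(J_{R_v/A_i})+(l-1)v(x_i)$ for a valuation nonnegative on $A_i$, your translation of $r\in\frac{1}{x_i^{l-1}}\mathrm{adj}(IA_i)$ into the adjoint inequalities over $R$ for exactly those valuations is correct, and the valuative criterion of properness for the blowup of $J$ shows every divisorial valuation of $R$ is nonnegative on some chart, so the intersection over the charts recovers $\mathrm{adj}(I)$ exactly. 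The one piece of genuine content you defer --- transitivity of Jacobian ideals for essentially-of-finite-type birational extensions of regular domains, and the matching of divisorial valuations of $R$ with those of the charts --- is indeed where the real work lies; deferring it to Proposition 18.3.2 of \cite{HS book} is no weaker than what the paper itself does, since the paper defers the entire proposition to that same citation.
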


\begin{prop} {\rm(Proposition 1.3.1 in \cite{L})}\label{prop 1.3.1 in L}
Let $R$ be a regular domain,   $I$ be an  ideal of $R$ and 
$f:Y\to \mathrm{Spec}R$ be a proper birational morphism  such that  $Y$ has rational singularities (for example,  $Y$ regular) and $I\mathcal O_Y$ invertible.
Then
$$\mathrm{adj}(I)=H^0(Y,I\omega_Y).$$
Moreover if such a $f$ exists, then for any multiplicative system $M$ in $R$,
$$\mathrm{adj}(IR_M)=\mathrm{adj}(I)R_M.$$
\end{prop}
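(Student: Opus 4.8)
The plan is to translate both sides into divisor-theoretic data on the single model $Y$, and then to reduce the valuative definition of the adjoint to conditions along the finitely many prime divisors of $Y$. First I would set up the geometric dictionary. Since $f\colon Y\to\mathrm{Spec}R$ is proper and birational over the regular (hence Gorenstein, $\omega_R\cong R$) base, the dualizing sheaf $\omega_Y$ may be identified with $\mathcal O_Y(K_{Y/\mathrm{Spec}R})$, where the relative canonical divisor has the form $K_{Y/\mathrm{Spec}R}=\sum_E v_E(J_{R_v/R})\,E$; along each prime divisor $E$ of the normal scheme $Y$ the coefficient equals the order $v_E(J_{R_v/R})$ of the Jacobian ideal, where $v_E=\mathrm{ord}_E$. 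Because $I\mathcal O_Y$ is invertible we may write $I\mathcal O_Y=\mathcal O_Y(-\sum_E v_E(I)\,E)$, so $I\omega_Y=\mathcal O_Y\bigl(\sum_E(v_E(J_{R_v/R})-v_E(I))E\bigr)$. With this identification, an element $r\in K$ lies in $H^0(Y,I\omega_Y)$ precisely when $\mathrm{div}_Y(r)+K_{Y/\mathrm{Spec}R}-\sum_E v_E(I)E\ge 0$, i.e. when $v_E(r)\ge v_E(I)-v_E(J_{R_v/R})$ for every prime divisor $E$ on $Y$. Comparing with the valuative definition of $\mathrm{adj}(I)$ gives at once the easy inclusion $\mathrm{adj}(I)\subset H^0(Y,I\omega_Y)$, since the divisors of $Y$ are among the divisorial valuations over which the defining intersection runs.

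The content of the proposition is the reverse inclusion: the conditions from the single model $Y$ must already force the conditions from all divisorial valuations. To establish this I would prove that $H^0(Y,I\omega_Y)$ is unchanged under further modification. Let $g\colon Y'\to Y$ be a proper birational morphism with $Y'$ regular; as $I\mathcal O_Y$ is invertible we have $I\mathcal O_{Y'}=g^*(I\mathcal O_Y)$, so the projection formula yields $g_*(I\omega_{Y'})=I\mathcal O_Y\otimes g_*\omega_{Y'}$. The crucial input is that $g_*\omega_{Y'}=\omega_Y$, which holds exactly because $Y$ has rational singularities (the trace map is an isomorphism). Hence $g_*(I\omega_{Y'})=I\omega_Y$, and since $H^0(Y',-)=H^0(Y,g_*(-))$ we obtain $H^0(Y',I\omega_{Y'})=H^0(Y,I\omega_Y)$. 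Now, given an arbitrary divisorial valuation $v$ of $K$ over $R$, choose such a regular $Y'$ dominating $Y$ on which $v=\mathrm{ord}_{E'}$ for a prime divisor $E'$ and on which $I\mathcal O_{Y'}$ is invertible; for $r\in H^0(Y,I\omega_Y)=H^0(Y',I\omega_{Y'})$, the description from the first paragraph applied to $Y'$ gives $v(r)\ge v(I)-v(J_{R_v/R})$. As $v$ was arbitrary, $r\in\mathrm{adj}(I)$, which proves $H^0(Y,I\omega_Y)\subset\mathrm{adj}(I)$ and hence the first equality.

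For the localization statement I would base change $f$ along $\mathrm{Spec}R_M\to\mathrm{Spec}R$. The resulting morphism $Y_M\to\mathrm{Spec}R_M$ is again proper and birational, $Y_M$ inherits rational singularities, $IR_M\mathcal O_{Y_M}$ remains invertible, and $\omega_{Y_M}$ is the pullback of $\omega_Y$ by compatibility of dualizing sheaves with the flat localization $R\to R_M$. Since coherent cohomology of a proper morphism commutes with flat base change, $H^0(Y,I\omega_Y)\otimes_R R_M\cong H^0(Y_M,IR_M\,\omega_{Y_M})$, and applying the first part of the proposition on both models gives $\mathrm{adj}(IR_M)=\mathrm{adj}(I)R_M$.

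The main obstacle is the model-independence step of the second paragraph, and within it the identification $g_*\omega_{Y'}=\omega_Y$: this is exactly where the rational-singularities hypothesis enters, and it rests on a Grauert--Riemenschneider-type vanishing (Kempf's criterion) rather than on any formal manipulation. The other point requiring care is the compatibility of $\omega_Y$ with the relative canonical divisor together with the matching of its coefficients with $v_E(J_{R_v/R})$; this is standard once the dictionary between divisorial valuations over $R$ and prime divisors on the models is in place, and it also quietly uses the availability of enough regular models dominating $Y$.
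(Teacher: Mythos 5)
First, a point of reference: the paper itself gives no proof of this proposition --- it is quoted from Lipman [L, Proposition 1.3.1] as a preliminary --- so your attempt can only be measured against Lipman's original argument. Your overall architecture is the standard one and matches that argument in outline: identify $H^0(Y,I\omega_Y)$ with the set of $r\in K$ satisfying $v_E(r)\ge v_E(I)-v_E(J_{R_{v_E}/R})$ along the prime divisors $E$ of $Y$ (using reflexivity of $\omega_Y$ and the Jacobian description of the codimension-one coefficients of $\omega_Y$), deduce the inclusion $\mathrm{adj}(I)\subset H^0(Y,I\omega_Y)$ for free, and obtain the reverse inclusion by showing that $H^0$ does not change when one passes to a higher model on which a given divisorial valuation appears. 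The localization statement via flat base change is also handled correctly.

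The genuine gap is in the pivotal step ``choose such a regular $Y'$ dominating $Y$ on which $v=\mathrm{ord}_{E'}$ \dots and on which $I\mathcal O_{Y'}$ is invertible.'' The proposition is stated for an arbitrary regular domain $R$, in any dimension and any (possibly mixed) characteristic; there, the existence of such regular models is precisely resolution of singularities, which is not available. You acknowledge this dependence in your final paragraph, but flagging it does not fill it, and your second key input (Kempf/Grauert--Riemenschneider applied to a resolution of $Y$) sits inside the same unavailable framework. The cited proof is structured exactly to avoid this: an arbitrary divisorial valuation $v$ is realized as $\mathrm{ord}_{E}$ on a merely \emph{normal} model $g:Z\to Y$ with $I\mathcal O_Z$ invertible (such $Z$ arise from blowing up and normalizing, since $R_v$ is essentially of finite type over $R$; no resolution is needed), and the required identity $g_*\omega_Z=\omega_Y$ comes not from a vanishing theorem for resolutions but from the pseudo-rational formulation of rational singularities, which asserts that the trace map $g_*\omega_Z\to\omega_Y$ is an isomorphism for \emph{every} proper birational $g:Z\to Y$ with $Z$ normal; the inequality at $v$ then reads off from the stalk of $\omega_Z$ at the generic point of $E$, i.e.\ from $\omega_{R_v/R}=J_{R_v/R}^{-1}$. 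Your argument is sound wherever resolution is available --- in particular in characteristic zero, and in the two-dimensional situation where this paper actually invokes the proposition, since there regular models realizing any divisorial valuation are produced by iterated point blowups --- but as a proof of the statement in its stated generality it has this hole, and closing it requires replacing your Kempf step by the pseudo-rationality property for arbitrary normal modifications.
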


\begin{thm}{\rm(Theorem 3.14 in \cite{HS})}\label{core adj in HS}
Let $(R,\mathfrak m)$ be a $2$-dimensional regular local ring with infinite residue field and $\mathfrak a$ be an integrally closed $\mathfrak m$-primary ideal.
Then 
$$\mathrm{core}(\mathfrak a)=\mathrm{adj}(\mathfrak a)\mathfrak a=\mathrm{adj}(\mathfrak a^2).$$

\end{thm}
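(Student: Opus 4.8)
The plan is to separate the asserted chain into the Skoda-type identity $\mathfrak a\,\mathrm{adj}(\mathfrak a)=\mathrm{adj}(\mathfrak a^2)$ and the genuine core computation $\mathrm{core}(\mathfrak a)=\mathrm{adj}(\mathfrak a^2)$, after first reducing to minimal reductions. Since the residue field is infinite and $\dim R=2$, Theorem~\ref{Rees theorem2} (with $r=1$) shows every minimal reduction $J$ of $\mathfrak a$ is generated by two elements $x,y$, which form a regular sequence because $\mathfrak a$ is $\mathfrak m$-primary; thus $J$ is a height-two complete intersection. I would also record that an integrally closed $\mathfrak m$-primary ideal in a two-dimensional regular local ring has reduction number one, so $\mathfrak a^2=J\mathfrak a$ for each such $J$. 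Finally, every reduction contains a minimal one, so $\mathrm{core}(\mathfrak a)=\bigcap_J J$ with $J$ ranging over minimal reductions.

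For the identity $\mathfrak a\,\mathrm{adj}(\mathfrak a)=\mathrm{adj}(\mathfrak a^2)$ I would argue geometrically. By Proposition~\ref{prop 1.3.1 in L}, choose a proper birational $f\colon Y\to\mathrm{Spec}\,R$ with $Y$ regular and $\mathfrak a\mathcal O_Y=\mathcal O_Y(-Z)$ invertible; then $\mathrm{adj}(\mathfrak a)=H^0(Y,\omega_Y(-Z))$ and $\mathrm{adj}(\mathfrak a^2)=H^0(Y,\omega_Y(-2Z))$. Since $\mathfrak a$ is integrally closed, $H^0(Y,\mathcal O_Y(-Z))=\overline{\mathfrak a}=\mathfrak a$, so the image of the multiplication map $H^0(\mathcal O_Y(-Z))\otimes H^0(\omega_Y(-Z))\to H^0(\omega_Y(-2Z))$ is exactly $\mathfrak a\,\mathrm{adj}(\mathfrak a)$, giving the inclusion $\mathfrak a\,\mathrm{adj}(\mathfrak a)\subseteq\mathrm{adj}(\mathfrak a^2)$ at once. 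The reverse inclusion is the surjectivity of this map, which I would deduce from Grauert--Riemenschneider vanishing $R^1f_*\omega_Y=0$ together with Mumford's regularity criterion applied to the $f$-globally generated invertible sheaf $\mathcal O_Y(-Z)$. Establishing this surjectivity is the first technical point.

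Granting the identity, the inclusion $\mathrm{adj}(\mathfrak a^2)\subseteq\mathrm{core}(\mathfrak a)$ reduces to showing $\mathrm{adj}(\mathfrak a)\,\mathfrak a\subseteq J$, i.e. $\mathrm{adj}(\mathfrak a)\subseteq(J:_R\mathfrak a)$, for every minimal reduction $J=(x,y)$. Here I would exploit that $J$ is a complete intersection of height $2=\dim R$: either run the quadratic-transform recursion of Proposition~\ref{basic properties}(4) (applied to a regular system of parameters generating $\mathfrak m$) to reduce the containment to transforms of $\mathfrak a$ on the first-order neighbourhoods and induct through Zariski's theory of complete ideals, or invoke linkage, $(J:_R\mathfrak a)$ being the ideal linked to $\mathfrak a$ by $J$. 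The subtle point is that $J$ is not integrally closed, so the divisorial description of the adjoint does not see $J$ directly and the two generators must enter the argument; this is exactly why the containment lands in $J$ rather than in $\overline{J}=\mathfrak a$.

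The reverse inclusion $\mathrm{core}(\mathfrak a)\subseteq\mathrm{adj}(\mathfrak a^2)$ is the \textbf{main obstacle}. Writing $\mathrm{adj}(\mathfrak a^2)=\bigcap_v\{r:v(r)\ge 2v(\mathfrak a)-v(J_{R_v/R})\}$, I must show $v(c)\ge 2v(\mathfrak a)-v(J_{R_v/R})$ for every $c\in\mathrm{core}(\mathfrak a)$ and every divisorial $v$. A single minimal reduction only yields $v(c)\ge v(J)=v(\mathfrak a)$, which is too weak when the discrepancy $v(J_{R_v/R})$ is small; the extra vanishing must come from $c$ lying in \emph{all} minimal reductions simultaneously. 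Realizing $v=v_E$ on a resolution and passing to the associated graded ring along $E$, one forces the leading forms of a core element to vanish to the required order by intersecting over sufficiently many general reductions, which is precisely where the infinite residue field is used. Carrying out this leading-form analysis---equivalently, constructing enough minimal reductions whose orders along the relevant exceptional divisors realize the discrepancy bound---via the Zariski--Lipman theory of complete ideals is the heart of the proof and the step I expect to be hardest.
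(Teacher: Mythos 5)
Your decomposition is sound, and everything except the final containment can in fact be made rigorous from results this paper already quotes. The identity $\mathrm{adj}(\mathfrak a^2)=\mathfrak a\,\mathrm{adj}(\mathfrak a)$ is the case $m=1$ of Lemma~\ref{skoda} (with $\mathfrak b=\mathfrak a$ and the other ideal equal to $R$), i.e. Lipman's 2.3 in \cite{L}, and the Koszul-complex-plus-vanishing mechanism you sketch is exactly the right one; note that if you run it with the two generators $a,b$ of a minimal reduction $J$ (rather than general sections of $\mathfrak a\mathcal O_Y$) it yields the stronger identity $\mathrm{adj}(\mathfrak a^2)=J\,\mathrm{adj}(\mathfrak a)$, which gives $\mathrm{adj}(\mathfrak a^2)\subseteq J$ for \emph{every} minimal reduction at once and short-circuits your linkage/quadratic-transform discussion of the inclusion $\mathrm{adj}(\mathfrak a)\subseteq (J:_R\mathfrak a)$. (That inclusion is in any case an equality, $\mathrm{adj}(\mathfrak a)=(J:_R\mathfrak a)$, by Lipman's Proposition 3.3 in \cite{L}, which the present paper invokes in the proof of Lemma~\ref{l(R/a)}.) So the chain $\mathfrak a\,\mathrm{adj}(\mathfrak a)=\mathrm{adj}(\mathfrak a^2)\subseteq\mathrm{core}(\mathfrak a)$ is fully within reach along your lines.

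The genuine gap is exactly where you flag it: $\mathrm{core}(\mathfrak a)\subseteq\mathrm{adj}(\mathfrak a^2)$, for which you offer a restatement of the difficulty rather than an argument. The valuation-by-valuation frame is moreover unpromising: for a fixed divisorial $v$ you need $v(c)\ge 2v(\mathfrak a)-v(J_{R_v/R})$, while membership of $c$ in each individual reduction only gives $v(c)\ge v(\mathfrak a)$, and nothing in your sketch produces, for a given $v$, a family of reductions whose intersection forces the extra drop $v(\mathfrak a)-v(J_{R_v/R})$ along the corresponding exceptional divisor --- ``intersecting over sufficiently many general reductions forces the leading forms to vanish to the required order'' is precisely the assertion to be proved, not a proof. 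Huneke and Swanson do not argue divisor by divisor: they establish the ideal-theoretic equality $\mathrm{core}(\mathfrak a)=I_{n-2}(A)\,\mathfrak a$ for an $n\times (n-1)$ presenting matrix $A$ (equivalently $\mathrm{core}(\mathfrak a)=\mathfrak a\,(J:_R\mathfrak a)$ for one fixed minimal reduction $J$) by an explicit matrix/joint-reduction analysis of finitely many general reductions, and then identify $I_{n-2}(A)$ with $\mathrm{adj}(\mathfrak a)$. This is also how the present paper proceeds at the module level: the statement you are proving is quoted from \cite{HS} without proof, and its generalization Theorem~\ref{core adj} is obtained by composing Mohan's Fitting-ideal computation of the core (Theorem~\ref{thm in M}, from \cite{M}) with Theorem~\ref{Main}, not by any leading-form analysis. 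To complete your proposal you should replace your fourth step by a proof that the intersection of (finitely many general) minimal reductions lands in $J(J:_R\mathfrak a)$ for a single fixed $J$; that Fitting-ideal argument is the actual heart of the theorem.
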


\noindent
{\bf \emph{Module Transforms  and Presenting matrices}}

We review the results of module transforms in \cite{K}.
Let $(R,\mathfrak m)$ be a $2$-dimensional regular local ring with infinite residue field and $M$ be a  finitely generated, torsion-free $R$-module.
Let $(-)^*$ denote the functor $\mathrm{Hom}_R(-,R)$.
The double dual $M^{**}$ of $M$ is a free $R$-module which canonically contains $M$ and  the quotient module $M^{**}/M$ is of finite length. Moreover, if $G$ is any free $R$-module containing $M$ such that $G/M$ is of finite length, then $M^{**}$ is isomorphic to $G$ up to unique isomorphism (Proposition 2.1 in \cite{K}).

Recall that the rank of $M$ is the vector space dimension of the $K$-vector space $M_K=M\otimes_RK$.
Let $\mathrm{rank}_R(M)$ denote the rank of $M$ and $\nu_R(M)$ denote the minimal number of generators of $M$.
We have $\mathrm{rank}_R(M)=\mathrm{rank}_R(M^{**})$ and $M\otimes_RK=M^{**}\otimes_RK$ since $M^{**}/M$ is of finite length.
Choose a basis for $M^{**}$ and a minimal generating set for $M$ and consider the matrix expressing this set of generators in terms of the chosen basis of $M^{**}$.
Considering the elements of $M^{**}$ as column vectors we get a $\mathrm{rank}_R(M)\times \nu_R(M)$ representing matrix for $M$.
The ideal of maximal minors, i.e., the minors of sized $\mathrm{rank}_R(M)$, is denoted $I(M)$ and is easily seen to be independent of the choices made.
We note that if $M$ is a free module then $I(M)=R$ and if $M$ is non-free then $I(M)$ is an $\mathfrak m$-primary ideal (See page 7 in \cite{M}).

We have the following exact sequence
$$0\to K\to G\to M^{**}\to M^{**}/M\to 0,$$
where $K$ and $G$ are free $R$-modules.
Note that the map $K\to G$ can be described by a $\mathrm{rank}_R(G)\times \mathrm{rank}_R(K)$ matrix $A$. This matrix $A$ is called a presenting matrix for $M$.
The ideal of $R$ generated by the $r$ minors of  $A$ is denoted $I_r(A)$.
We define $I_r(A)$ to be $R$ if $r\le 0$.

\begin{prop} {\rm(Proposition 2.5,  Proposition 4.3, Proposition 4.6,  Proposition 4.7 and Theorem 5.4 in \cite{K})}\label{properties of module in K}
Let $(R,\mathfrak m)$ be a $2$-dimensional  regular local ring with infinite residue field and $M$ be a finitely generated torsion-free $R$-module. 
Let  $x\in\mathfrak m\setminus \mathfrak m^2$ and  $T$ be a ring obtained by localization $R[\frac{\mathfrak m}{x}]$ at a maximal ideal containing $\mathfrak m R[\frac{\mathfrak m}{x}]$. 
\begin{enumerate}
\item[{\rm(1)}]
If $M$ is integrally closed, then  $I(M)$ is integrally closed, $MT$ is a integrally closed $T$-module and
$M=MR[\frac{\mathfrak m}{y}]\cap M^{**}$ for general $y\in \mathfrak m$.

\item[{\rm(2)}]
The following conditions are equivalent:
\begin{enumerate}
\item[{\rm(a)}]
There exists $y\in\mathfrak m\setminus \mathfrak m^2$ such that $M=MR[\frac{\mathfrak m}{y}]\cap M^{**}.$ 

\item[{\rm(b)}]
$\mathrm{ord_R}(I(M))=\nu_R(M)-\mathrm{rank}_R(M)$.
\end{enumerate}

\item[{\rm(3)}]
 $I(MT)=x^{-\mathrm{ord}_R(I(M))}I(M)T$.

\end{enumerate}
\end{prop}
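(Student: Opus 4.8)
The plan is to reduce all three parts to the theory of quadratic transforms of the two-dimensional regular local ring $R$, paralleling Zariski's theory of complete $\mathfrak m$-primary ideals. First I would record the structure of $T$: since $x\in\mathfrak m\setminus\mathfrak m^2$ we have $\mathfrak m R[\frac{\mathfrak m}{x}]=xR[\frac{\mathfrak m}{x}]$, so after localizing at a maximal ideal containing $\mathfrak m R[\frac{\mathfrak m}{x}]$ the ring $T$ is again a two-dimensional regular local ring with $\mathfrak mT=xT$ and $x$ part of a regular system of parameters. The inclusion $R\subset T$ together with $M_K=(MT)\otimes_TK$ lets me compare $M$, $M^{**}$, $MT$ and $M^{**}T$ inside the single vector space $M_K$, which is the common framework for all three assertions.

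For part (3) I would use the exterior-power description of the minor ideal: writing $r=\mathrm{rank}_R(M)$, the ideal $I(M)$ is the image of the natural map $\wedge^r_R M\to\wedge^r_R M^{**}\cong R$, the isomorphism being fixed by the chosen basis of $M^{**}$. Extending scalars to $T$ shows that the image of $\wedge^r_T(MT)\to\wedge^r_T(M^{**}T)\cong T$ is exactly $I(M)T$. Since $I(MT)$ is instead computed against a basis of $(MT)^{**}$, which may strictly contain the free module $M^{**}T$, the two differ by the determinant of the change-of-basis matrix relating $M^{**}T$ and $(MT)^{**}$. The heart of the argument is to show this determinant is a unit times $x^{\mathrm{ord}_R(I(M))}$; I would do this by identifying $x^{-\mathrm{ord}_R(I(M))}I(M)T$ with the transform of the $\mathfrak m$-primary ideal $I(M)$ under the quadratic transform, using $\mathfrak mT=xT$ together with $I(M)\subset\mathfrak m^{\mathrm{ord}_R(I(M))}$ and $I(M)\not\subset\mathfrak m^{\mathrm{ord}_R(I(M))+1}$. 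This yields $I(MT)=x^{-\mathrm{ord}_R(I(M))}I(M)T$.

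For part (2) I would first establish the general inequality $\nu_R(M)-\mathrm{rank}_R(M)\le\mathrm{ord}_R(I(M))$ by passing to leading forms in the associated graded ring $\mathrm{gr}_{\mathfrak m}(R)\cong k[X,Y]$ and bounding the number of minimal generators by the order of the minor ideal, and then show that equality holds precisely when condition (a) is satisfied. The contractedness $M=MR[\frac{\mathfrak m}{y}]\cap M^{**}$ means no generator of $M$ is lost under the quadratic transform, which forces the leading forms to behave minimally and gives equality; conversely, equality lets me reconstruct $M$ from its transform via the colength computation governed by part (3). When $r=1$ this specialises to Zariski's classical characterisation of contracted $\mathfrak m$-primary ideals by $\mu(I)=\mathrm{ord}(I)+1$.

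Finally, for part (1) I would argue valuatively. Every divisorial valuation of $K$ dominating $T$ also dominates $R$, and conversely the divisorial valuations of $R$ factor through iterated quadratic transforms; combining this with the valuative definition of $\overline M$ shows that $MT$ is integrally closed whenever $M$ is. For the minor ideal I would prove the compatibility $\overline{I(M)}=I(\overline M)$ with integral closure, so that $M=\overline M$ forces $I(M)=I(\overline M)=\overline{I(M)}$. The contractedness $M=MR[\frac{\mathfrak m}{y}]\cap M^{**}$ for general $y$ then follows by combining part (2) with the fact that an integrally closed module attains equality in $\nu_R(M)-\mathrm{rank}_R(M)\le\mathrm{ord}_R(I(M))$, the word ``general'' entering because the leading-form argument requires $y$ to avoid the finitely many directions in which a minor degenerates. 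I expect the main obstacle to be the exponent bookkeeping in part (3)---showing that the passage from $M^{**}T$ to $(MT)^{**}$ contributes exactly $x^{\mathrm{ord}_R(I(M))}$, since every other step rests on this identity---together with pinning down the equality case of the sharp generator bound in part (2).
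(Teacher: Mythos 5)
This proposition is quoted background: the paper itself gives no proof, citing Propositions 2.5, 4.3, 4.6, 4.7 and Theorem 5.4 of \cite{K}, so your proposal has to be measured against Kodiyalam's arguments rather than anything in this paper. Your part (3) is sound and is essentially the standard proof: identify $I(M)T$ with the image of $\wedge^r_T(MT)\to\wedge^r_T(M^{**}T)\cong T$, note $MT\subseteq (MT)^{**}\subseteq M^{**}T$ with cokernel of the second inclusion supported on the exceptional prime $xT$ (since $M^{**}T/MT=(M^{**}/M)\otimes_RT$ is killed by a power of $\mathfrak m$ and $\mathfrak mT=xT$), and compute the determinant of that inclusion by Smith normal form over the discrete valuation ring $T_{xT}$, using that the order valuation of $I(M)$ is $\mathrm{ord}_R(I(M))$. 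For the inequality in part (2) you work harder than necessary: a minimal presentation matrix $A$ is $n\times (n-r)$ with all entries in $\mathfrak m$ and $I(M)=I_{n-r}(A)$, so $\mathrm{ord}_R(I(M))\ge n-r=\nu_R(M)-\mathrm{rank}_R(M)$ immediately, with no leading-form analysis.

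The genuine gaps are concentrated in part (1), where you assume exactly the hard content. First, your valuative argument that $MT$ is integrally closed fails for a direction reason: the DVRs of $K$ containing $T$ form a \emph{proper} subfamily of those containing $R$, so $\overline{MT}=\bigcap_{T\subseteq V}MTV=\bigcap_{T\subseteq V}MV$ is an intersection over fewer valuation rings than $\bigcap_{R\subseteq V}MV=\overline{M}=M$, and intersecting over fewer rings yields a \emph{larger} module; nothing in this bounds $\overline{MT}$ above by $MT$. An element integral over $MT$ is constrained only by valuations centered at the point of the blow-up corresponding to $T$, and excluding such elements is a substantive theorem in \cite{K}, proved via contractedness and induction, not a formal consequence of the definition. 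Second, you propose to get integral closedness of $I(M)$ from an alleged compatibility $\overline{I(M)}=I(\overline{M})$, for which you offer no argument; restricted to integrally closed $M$ this identity \emph{is} Kodiyalam's Theorem 5.4, the deepest of the five cited statements, so invoking it is circular. Third, you deduce contractedness for general $y$ from ``the fact that an integrally closed module attains equality in $\nu_R(M)-\mathrm{rank}_R(M)\le\mathrm{ord}_R(I(M))$,'' but that fact is Proposition 4.3 of \cite{K}, i.e., precisely what needs proof here; likewise the ``no generator is lost'' heuristic for the equality case of (2) never becomes an argument. In sum: (3) is correct in outline, but (1) and the forward implication machinery behind (2) rest on assertions that are either wrong in direction or restatements of the cited results.
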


\begin{prop} {\rm(Proposition 5.1 in \cite{K})}\label{intersection}
Let $(R,\mathfrak m)$ be a $2$-dimensional regular local ring with infinite residue field, $x$ be an element of  $\mathfrak m\setminus \mathfrak m^2$, $S=R[\frac{\mathfrak m}{x}]$ and $I$ be an ideal  of $R$.
Then 
$IS=S\cap \bigl(\bigcap IT\bigr)$ where the second intersection ranges over all  ring $T$ obtained by localization $S$ at a maximal ideal containing $\mathfrak mS$.
\end{prop}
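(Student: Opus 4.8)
The plan is to reduce the statement to the standard local–global principle for ideal membership, after making the structure of $S=R[\frac{\mathfrak m}{x}]$ explicit. First I would record that $S=\bigcup_{n\ge 0}x^{-n}\mathfrak m^{n}$ is a two-dimensional regular domain with the same fraction field $K$ as $R$, and that $\mathfrak m S=xS$ is principal: writing $\mathfrak m=(x,y)$ for a suitable $y$, one has $y=x\cdot(y/x)\in xS$. Hence the rings $T$ occurring in the statement are precisely the localizations $S_{\mathfrak n}$ at the maximal ideals $\mathfrak n$ of $S$ containing $xS$. Viewing everything inside $K$, the inclusion $IS\subseteq S\cap\bigl(\bigcap_{T}IT\bigr)$ is immediate, since $S\subseteq T$ gives $IT=(IS)T\supseteq IS$, and also $IS\subseteq S$.

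For the reverse inclusion I would apply the local–global principle: for the ideal $J=IS$ of the Noetherian domain $S$ and an element $f\in S$, one has $f\in J$ if and only if $f\in JS_{\mathfrak n}$ for every maximal ideal $\mathfrak n$ of $S$. This is seen by considering the colon ideal $(J:_{S}f)$: the containment $f\in J$ is equivalent to $(J:_{S}f)=S$, an ideal equals the unit ideal exactly when it survives in every localization, and $(J:_{S}f)_{\mathfrak n}=(JS_{\mathfrak n}:f)$ because colons commute with localization. Since $\bigcap_{\mathfrak n}S_{\mathfrak n}=S$, this yields $J=S\cap\bigcap_{\mathfrak n}JS_{\mathfrak n}$ over all maximal ideals $\mathfrak n$, and it remains to discard those $\mathfrak n$ that do not contain $\mathfrak m S$.

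This discarding is the heart of the argument. If $\mathfrak n\not\supseteq\mathfrak m S$ then $x\notin\mathfrak n$, so $x$ is a unit in $S_{\mathfrak n}$ and $S_{\mathfrak n}$ is a localization of $S_{x}=R[\frac1x]$. The reduction then succeeds whenever every prime minimal over $IS$ contains $\mathfrak m S=xS$, equivalently when $IS$ contains a power $x^{N}S=(\mathfrak m S)^{N}$ (as happens when $I$ is $\mathfrak m$-primary): in that case the unit $x^{N}$ of $S_{\mathfrak n}$ already lies in $IS_{\mathfrak n}$, so $IS_{\mathfrak n}=S_{\mathfrak n}$ and the membership $f\in IS_{\mathfrak n}$ is automatic for every $f\in S$. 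Such maximal ideals therefore impose no condition, leaving
\[
IS=S\cap\bigcap_{\mathfrak n}IS_{\mathfrak n}=S\cap\bigcap_{\mathfrak n\supseteq\mathfrak m S}IS_{\mathfrak n}=S\cap\Bigl(\bigcap_{T}IT\Bigr),
\]
as required.

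I expect the main obstacle to be exactly this reduction to the maximal ideals containing $\mathfrak m S$: one must know that all the primary components of $IS$ are supported on the exceptional locus $\{\mathfrak n\supseteq xS\}$, which is what makes the first-neighborhood rings $T$ sufficient to recover $IS$. This is the step that genuinely uses how $I$ meets the maximal ideal (so that $IS$ contains a power of $xS$); once it is secured, the local–global principle and the commutation of colon ideals with localization are routine.
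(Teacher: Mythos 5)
The paper itself gives no proof of this proposition: it is quoted from \cite{K} (Proposition 5.1), so there is no internal argument to compare yours against, and I judge your proposal on its own terms. Your supporting steps are all correct: the description $S=\bigcup_{n\ge0}x^{-n}\mathfrak m^{n}$, the fact that $\mathfrak m S=xS$, the identification of the rings $T$ with the localizations $S_{\mathfrak n}$ at maximal ideals $\mathfrak n\supseteq xS$, the easy inclusion $IS\subseteq S\cap\bigl(\bigcap_{T}IT\bigr)$, the local--global principle via colon ideals, and the observation that a maximal ideal $\mathfrak n\not\supseteq xS$ imposes no condition once $x^{N}\in IS$ for some $N$. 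Consequently your argument is a complete and correct proof whenever $IS$ contains a power of $xS$, in particular whenever $I$ is $\mathfrak m$-primary or $I=R$. That is also every case in which the present paper actually invokes the proposition: in the proofs of Theorem \ref{Main} and Lemma \ref{A=B} it is applied to the Fitting ideals $I_{n-r-1}(A)$ and $I_{n-r-m}(B)$, which contain the $\mathfrak m$-primary ideal $I(M)=I_{n-r}(A)$ when $M$ is non-free, or equal $R$.

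The obstacle you flagged for a general ideal $I$, however, is not a gap you failed to close; it cannot be closed, because the statement as quoted, for an arbitrary ideal $I$ of $R$, is false. Take $R=k[[x,y]]$, $\mathfrak m=(x,y)$, $t=y/x\in S$, and $I=(x-y^{2})$. In $S$ we have $x-y^{2}=x(1-xt^{2})$, and for every maximal ideal $\mathfrak n$ of $S$ containing $\mathfrak m S=xS$ the element $1-xt^{2}$ is congruent to $1$ modulo $\mathfrak n$, hence a unit of $S_{\mathfrak n}$; therefore $IS_{\mathfrak n}=xS_{\mathfrak n}$ and $x\in S\cap\bigl(\bigcap_{T}IT\bigr)$. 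But $x\notin IS$: otherwise $x/(x-y^{2})\in S$, so $x^{n+1}=a(x-y^{2})$ for some $a\in\mathfrak m^{n}$, forcing the prime element $x-y^{2}$ to divide $x^{n+1}$ in the UFD $R$, which is absurd since $x-y^{2}$ is not an associate of $x$. (Geometrically, the strict transform of $V(x-y^{2})$ meets the exceptional divisor only at the one point of it that lies outside the chart $\mathrm{Spec}\,S$, so the rings $T$ cannot detect the factor $1-xt^{2}$.) Thus the hypothesis you isolated --- every minimal prime of $IS$ contains $xS$, as happens when $I$ is $\mathfrak m$-primary --- is exactly what the proposition needs; it should be read with that hypothesis (which holds in all of its applications in this paper), and with it your proof is correct.
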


\noindent
{\bf \emph{Buchsbaum-Rim multiplicity}}

We will review the definition of the Buchsbaum-Rim multiplicity for a module.
Let $R$ be a $d$-dimensional Noetherian local ring.
Let $P$ be a nonzero $R$-module of finite length with a free presentation
$$G\to F\to P\to 0.$$
Let $S(G)$ denote the image of $\mathrm{Sym}^R(G)$ in $\mathrm{Sym}^R(F)$.
Then $S(G)$ is a graded subring of   $\mathrm{Sym}^R(F)$ whose homogeneous components are denoted $S_n(G)$.
In \cite{BR}, Buchsbaum and Rim showed that the length $\ell_R(\mathrm{Sym}_n^R(F)/S_n(G))$ is asymptotically given by a polynomial function, $p(n)$, of $n$ of degree $\mathrm{rank}_R(F)+d-1$ and the leading coefficient of this polynomial is independent of the presentation chosen.

\begin{defn}
With notation as above, $(\mathrm{rank}_R(F)+d-1)!$ times leading coefficient of $p(n)$, denoted $e(P)$, is an invariant of $P$ and is called the Buchsbaum-Rim multiplicity of $P$.
We define the Buchsbaum-Rim multiplicity of the zero module to be $0$.
\end{defn}

\begin{prop} {\rm(Theorem 4.8 in \cite{K})}\label{buchsbaum rim}
Let $(R,\mathfrak m)$ be a $2$-dimensional  regular local ring with infinite residue field and $M$ be a non-free finitely generated torsion-free $R$-module. 
Let  $x\in\mathfrak m\setminus \mathfrak m^2$ and  $T$ be a ring obtained by localization $R[\frac{\mathfrak m}{x}]$ at a maximal ideal containing $\mathfrak m R[\frac{\mathfrak m}{x}]$. 
Then $$e((MT)^{**}/MT)<e(M^{**}/M).$$
\end{prop}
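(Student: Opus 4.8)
The plan is to deduce the strict inequality from a one-step \emph{additivity formula} for the Buchsbaum--Rim multiplicity under the transform $R\rightsquigarrow T$, in which the exceptional contribution is strictly positive precisely because $M$ is non-free. Write $F=M^{**}$, $r=\mathrm{rank}_R M$ and $o=\mathrm{ord}_R(I(M))$; since $M$ is non-free, $I(M)$ is $\mathfrak m$-primary and hence $o>0$. Fix a regular system of parameters $\mathfrak m=(x,y)$ and set $S=R[\mathfrak m/x]$, a two-dimensional regular ring with $\mathfrak m S=xS$; the rings $T$ in the statement are exactly the localizations of $S$ at the (finitely many) maximal ideals containing $xS$, and each degree $[\kappa(T):\kappa(R)]$ is finite. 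If $MT$ is free for a given $T$ then $e_T((MT)^{**}/MT)=0<e_R(M^{**}/M)$ (the latter is positive because $M\neq M^{**}$), so that case is immediate; the content lies in the non-free case, which the formula treats uniformly.

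First I would compare Hilbert functions before and after blowing up. The multiplicity is read off from $h_M(n):=\ell_R(\mathrm{Sym}^R_n(F)/S_n(M))$, which for $n\gg0$ is a polynomial of degree $r+1$ with leading coefficient $e(M^{**}/M)/(r+1)!$, and similarly $h_{MT}(n):=\ell_T(\mathrm{Sym}^T_n((MT)^{**})/S_n(MT))$ over each $T$. The key step is to track how $S_n(M)$ transforms: extending scalars gives $S_n(M)S=S_n(MS)\subset \mathrm{Sym}_n(F_S)$, and by Proposition~\ref{properties of module in K}(3) the determinantal ideal satisfies $I(MT)=x^{-o}I(M)T$, recording that the generators of $M$ carry a common order $o$ along the exceptional divisor $xS$. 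Patching the localized data at the various maximal ideals of $S$ over $\mathfrak m$ back to $R$, in the spirit of Proposition~\ref{intersection}, I would derive an identity of the form $h_M(n)=\sum_{T}[\kappa(T):\kappa(R)]\,h_{MT}(n)+c(n)$, where $c(n)$ is the exceptional correction produced by the factor of order $o$. Comparing top-degree coefficients then yields
$$e_R(M^{**}/M)=\sum_{T}[\kappa(T):\kappa(R)]\,e_T((MT)^{**}/MT)+c,$$
with $c$ a positive integer expressible through $o$ and $r$, positive exactly because $o>0$.

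Granting this, the conclusion is immediate. Every summand on the right is nonnegative and $c>0$, so for each fixed $T$ we get $[\kappa(T):\kappa(R)]\,e_T((MT)^{**}/MT)\le e_R(M^{**}/M)-c<e_R(M^{**}/M)$, and since $[\kappa(T):\kappa(R)]\ge1$ this forces $e_T((MT)^{**}/MT)<e_R(M^{**}/M)$. The main obstacle is plainly the additivity formula itself: one must make precise how $S_n(M)$ distributes across the exceptional divisor, reconcile the lengths $\ell_R$ and $\ell_T$ through the residue-field degrees, and isolate the exceptional coefficient $c$ while proving $c>0$ in the non-free case. I expect the bookkeeping to parallel the Hoskin--Deligne length formula for $\mathfrak m$-primary complete ideals, with $\mathrm{ord}_R(I(M))$ in the role of the order of the ideal and $c$ as the module analog of the positive term $\binom{o+1}{2}$ appearing there.
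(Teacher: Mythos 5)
Your proposal cannot be compared against an argument in the paper, because the paper does not prove this proposition at all: it is quoted from Kodiyalam (Theorem 4.8 in \cite{K}) and used as a black box to drive the inductions on $e(M^{**}/M)$ in Theorem \ref{Main} and Lemma \ref{A=B}. Judged as a self-contained argument, your proposal has a genuine gap: everything is delegated to the displayed ``additivity formula,'' which is never proved, and which --- as you state it, namely as an identity of Buchsbaum--Rim functions for an \emph{arbitrary} non-free torsion-free $M$ --- is false. The proposition does not assume $M$ integrally closed, and already for ideals the function-level identity fails: take $M=I=(x^2,y^2)\subset k[[x,y]]$, so $r=1$, $o=2$, and $h_I(n)=\ell(R/I^n)=2n(n+1)$, while every transform of $I$ in the first neighborhood is the unit ideal; your identity would force $h_I(n)=c(n)$, where the Hoskin--Deligne exceptional term is $c(n)=\binom{2n+1}{2}=n(2n+1)\neq 2n(n+1)$. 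Only the leading coefficients agree, and that agreement cannot be extracted ``by comparing top-degree coefficients'' of a false identity: for non-complete $M$ one must first replace $M$ by $\overline{M}$, using invariance of the Buchsbaum--Rim multiplicity under reductions (Rees' multiplicity theory for modules) together with $\overline{MT}=\overline{\overline{M}T}$ --- a reduction step your plan never mentions. If instead $c(n)$ is merely \emph{defined} as the difference of the two sides, then the assertion ``$c>0$ because $o>0$'' is precisely the inequality to be proven, and the argument is circular.

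Even in the complete case, where a transform formula of this type does hold, the ``patching'' step conceals the real obstruction. Extending scalars does give $S_n(M)T=S_n(MT)$, but the ambient free module does not transform compatibly: $(MT)^{**}\neq M^{**}T$ in general (for an ideal $I$ of order $o$ one has $M^{**}T=T$ while $(IT)^{**}=x^{o}T$), and the quotient $\mathrm{Sym}_n(M^{**}T)/S_n(M)T$ is supported along the whole exceptional divisor $xT$, hence has \emph{infinite} length over $T$. So there is no naive length bookkeeping to patch, and Proposition \ref{intersection} cannot serve this purpose: it is a statement about intersections of ideals, not about additivity of lengths. Inserting the factor $x^{-o}$ at the level of the module itself --- not merely at the level of the determinantal ideal as in Proposition \ref{properties of module in K}(3) --- and controlling the resulting exceptional contribution is exactly the Hoskin--Deligne-type theorem you are assuming. (A smaller slip: there are infinitely many maximal ideals of $R[\frac{\mathfrak m}{x}]$ containing $\mathfrak m R[\frac{\mathfrak m}{x}]$, since the quotient by $\mathfrak m R[\frac{\mathfrak m}{x}]$ is a polynomial ring over the residue field; your sum over $T$ is finite only because $MT$ is free for all but finitely many $T$, which should be said.) To repair the argument you would need either to prove the module analogue of the Hoskin--Deligne formula for complete modules and then carry out the reduction to the complete case, or to follow Kodiyalam's own proof in \cite{K}, which does not pass through such a formula --- which is exactly why the paper can simply cite it.
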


\section{The adjoint of an ideal}

In this section we study the  adjoint of the ideal of $R$ generated by the  minors of a representing matrix for an  integrally closed module  over a $2$-dimensional regular local ring.

\begin{thm}\label{Main}
Let $(R,\mathfrak m)$ be a $2$-dimensional regular local ring with infinite residue field and $M$ be a finitely generated integrally closed torsion-free $R$-module of  rank $r$.
Let $A$ be an $n\times n-r$ presenting matrix for $M$.
Then 
$$\mathrm{adj}(I(M))=I_{n-r-1}(A).$$
\end{thm}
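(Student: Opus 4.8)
The plan is to reduce the statement to an identity of Fitting ideals and then induct on the Buchsbaum--Rim multiplicity $e(M^{**}/M)$. First I would observe that the cokernel of $A$ is exactly $M$: in the sequence $0\to K\to G\to M^{**}\to M^{**}/M\to 0$ the composite $K\to G\to M^{**}$ is zero, $G$ surjects onto $M$, and $A(K)$ is the kernel of that surjection, so $R^{n-r}\xrightarrow{A}R^{n}\to M\to 0$ is a presentation of $M$. Hence $I_{n-r-1}(A)=\mathrm{Fitt}_{r+1}(M)$; in particular it is independent of the presenting matrix, and the theorem is equivalent to $\mathrm{adj}(I(M))=\mathrm{Fitt}_{r+1}(M)$. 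The base case is $M$ free, where $e(M^{**}/M)=0$, $I(M)=R$, $n=r$, and both sides equal $R$ (using the convention $I_{-1}(A)=R$).

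For the inductive step I would compute $\mathrm{adj}(I(M))$ by descending to a first quadratic transform. Writing $\mathfrak m=(x,y)$, Proposition \ref{basic properties}(4) gives $\mathrm{adj}(I(M))=\bigl(\tfrac1x\mathrm{adj}(I(M)R[\tfrac{\mathfrak m}{x}])\cap\tfrac1y\mathrm{adj}(I(M)R[\tfrac{\mathfrak m}{y}])\bigr)\cap R$. Fix $S=R[\tfrac{\mathfrak m}{x}]$ and let $T$ range over the localizations of $S$ at maximal ideals containing $\mathfrak mS$. Since adjoints localize (Proposition \ref{prop 1.3.1 in L}), together with Proposition \ref{intersection} one has $\mathrm{adj}(I(M)S)=S\cap\bigcap_T\mathrm{adj}(I(M)T)$. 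Because $M$ is integrally closed, Proposition \ref{properties of module in K}(1),(2) give $\mathrm{ord}_R(I(M))=\nu_R(M)-r=n-r$, and then Proposition \ref{properties of module in K}(3) yields $I(M)T=x^{\,n-r}I(MT)$, so $\mathrm{adj}(I(M)T)=x^{\,n-r}\mathrm{adj}(I(MT))$ by Proposition \ref{basic properties}(2). Finally $MT$ is integrally closed of rank $r$ (Proposition \ref{properties of module in K}(1)) with $e((MT)^{**}/MT)<e(M^{**}/M)$ (Proposition \ref{buchsbaum rim}), so the inductive hypothesis applies and $\mathrm{adj}(I(MT))=\mathrm{Fitt}_{r+1}(MT)$.

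The crux, and the step I expect to be hardest, is the transform law for the relevant Fitting ideal, namely $x^{\,n-r-1}\mathrm{Fitt}_{r+1}(MT)=\mathrm{Fitt}_{r+1}(M)T$ for each $T$. The equality $\mathrm{Fitt}_{r+1}(M)T=\mathrm{Fitt}_{r+1}(M\otimes_R T)$ is formal, since Fitting ideals commute with base change; the content is to control the $x$-power by which $MT$ differs from $M\otimes_R T$, that is, the torsion (supported on $\mathfrak mT=xT$) killed in passing to the submodule $MT\subset M_K$, and to check it contributes exactly the factor $x^{\,n-r-1}$. Granting this, substitution gives $\mathrm{adj}(I(M)T)=x^{\,n-r}\mathrm{Fitt}_{r+1}(MT)=x\,\mathrm{Fitt}_{r+1}(M)T$, so that $\tfrac1x\mathrm{adj}(I(M)S)=\mathrm{Fitt}_{r+1}(M)S$ by Proposition \ref{intersection} applied to the ideal $\mathrm{Fitt}_{r+1}(M)$; the same holds in the $y$-chart, and the two charts cover the blow-up of $\mathfrak m$. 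Feeding this back into Proposition \ref{basic properties}(4) and using Proposition \ref{intersection} once more collapses the right-hand side to $\mathrm{Fitt}_{r+1}(M)=I_{n-r-1}(A)$, which simultaneously shows this determinantal ideal is integrally closed. The delicate points are the exact exponent in the transform law and the denominator bookkeeping across the two charts; everything else is assembled from the quoted propositions.
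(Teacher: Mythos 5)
Your overall route is the same as the paper's: induct on $e(M^{**}/M)$, pass to the two charts of the blow-up of $\mathfrak m$ via Proposition \ref{basic properties}(4), localize at the maximal ideals lying over $\mathfrak m$, and use Proposition \ref{properties of module in K} and Proposition \ref{buchsbaum rim} to apply the inductive hypothesis there. However, the two places where your argument is incomplete are exactly the places where the paper leans on Mohan's Proposition 2.5 in \cite{M}, and as written both are genuine gaps. First, the transform law $x^{n-r-1}\mathrm{Fitt}_{r+1}(MT)=\mathrm{Fitt}_{r+1}(M)T$, which you explicitly grant, cannot be left open: it is the heart of the induction step. The paper obtains it by citing the proof of Proposition 2.5 in \cite{M}, which shows that, after reducing to a minimal presentation ($n=\nu_R(M)$ --- a reduction you never make, and which is needed so that the entries of $A$ lie in $\mathfrak m$ and can be divided by $x$ in $S$), the matrix $A/x$ is again a presenting matrix, now for $MT$; granting that, the exponent is automatic, since each $(n-r-1)$-minor of $A/x$ is $x^{-(n-r-1)}$ times the corresponding minor of $A$. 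Your reformulation via ``Fitting ideals commute with base change plus control of the torsion of $M\otimes_RT$'' is a reasonable way to think about it, but no argument is supplied, so the crux of the proof is missing.

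Second, and more seriously, your final collapse fails as stated. Proposition \ref{intersection} compares $IS$ with $\bigcap_T IT$ inside $S$; it says nothing about contraction back to $R$, so it cannot give $\mathrm{Fitt}_{r+1}(M)S\cap \mathrm{Fitt}_{r+1}(M)R[\frac{\mathfrak m}{y}]\cap R=\mathrm{Fitt}_{r+1}(M)$. That contraction statement is false for general ideals: for $\mathfrak a=(x^2,y^2)$ one has $\mathfrak aR[\frac{\mathfrak m}{x}]=x^2R[\frac{\mathfrak m}{x}]$ and $\mathfrak aR[\frac{\mathfrak m}{y}]=y^2R[\frac{\mathfrak m}{y}]$, so $xy$ lies in both extended ideals and in $R$, but not in $\mathfrak a$. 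What your argument actually yields is only $\mathrm{adj}(I(M))\supset I_{n-r-1}(A)$, together with an upper bound by $\overline{I_{n-r-1}(A)}$. The paper closes this by invoking Mohan's Proposition 2.5 once more --- $I_{n-r-1}(A)$ is integrally closed --- and then the contraction results quoted in Proposition \ref{properties of module in K}(1),(2) to choose $x,y$ with $(x,y)=\mathfrak m$ such that both $I(M)$ and $I_{n-r-1}(A)$ are contracted from both charts; this choice is made at the very start of the paper's proof and is absent from yours. In particular, your closing claim that the argument ``simultaneously shows'' that $I_{n-r-1}(A)$ is integrally closed is circular: integral closedness is an input to the contraction step, not an output. (A milder instance of the same misuse: $\mathrm{adj}(I(M)S)=S\cap\bigcap_T\mathrm{adj}(I(M)T)$ does not follow from Proposition \ref{intersection}, since $\mathrm{adj}(I(M)S)$ is not known to be extended from $R$; the paper justifies its analogous step by noting that $I(M)S\supset(\mathfrak mS)^l$, so every associated prime of the adjoint contains $\mathfrak mS$ and hence lies under one of the maximal ideals in question.)
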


\begin{proof}
If $M$ is a free module, then $I(M)=I_{n-r-1}(A)=R$ by the definition of the Fitting ideal.
Therefore we have that $\mathrm{adj}(I(M))=I_{n-r-1}(A)=R.$

We assume that $M$ is a non-free module.
We may assume that $n=\nu_R(M)$ (See the proof of Proposition 2.5 in \cite{M}).
We will prove that $\mathrm{adj}(I(M))=I_{n-r-1}(A)$ by induction on $e(M^{**}/M)$.
Note that $I_{n-r-1}(A)$ is integrally closed by Proposition 2.5 in \cite{M}.
By  Proposition \ref{properties of module in K}, we can choose $x,y\in \mathfrak m$  such that 
$$(x,y)=\mathfrak m,\ \  I(M)=I(M)R[\frac{x}{y}]\cap R=I(M)R[\frac{y}{x}]\cap R\ \ \mathrm{and}$$ 
$$I_{n-r-1}(A)=I_{n-r-1}(A)R[\frac{x}{y}]\cap R=I_{n-r-1}(A)R[\frac{y}{x}]\cap R.$$
Let $S=R[\frac{y}{x}]$ and $T=R[\frac{x}{y}]$.
Let $C_1$ be the set of the maximal ideals of $S$ containing $\mathfrak mS$ and $C_2$ be the set of the maximal ideals of $T$ containing $\mathfrak mT$.
By Proposition \ref{basic properties} and Proposition \ref{prop 1.3.1 in L}, we have
\begin{align*}
&\mathrm{adj}(I(M))\\
&=\frac{1}{x}\mathrm{adj}\Bigl(I(M)S\Bigr)\cap\frac{1}{y}\mathrm{adj}\Bigl(I(M)T\Bigr)\cap R\\
&=x^{n-r-1}\mathrm{adj}\Bigl(\frac{I(M)}{x^{n-r}}S\Bigr)\cap{y^{n-r-1}}\mathrm{adj}\Bigl(\frac{I(M)}{y^{n-r}}T\Bigr)\cap R\\
&={x}^{n-r-1}\bigcap_{\mathfrak n_1\in C_1}\mathrm{adj}\Bigl(\frac{I(M)}{x^{n-r}}S\Bigr)_\mathfrak {n_1}\cap {y}^{n-r-1}\bigcap_{\mathfrak n_2\in C_2}\mathrm{adj}\Bigl(\frac{I(M)}{y^{n-r}}T\Bigr)_{\mathfrak n_2}\cap R\\
&={x}^{n-r-1}\bigcap_{\mathfrak n_1\in C_1}\mathrm{adj}\Bigl(\frac{I(M)}{x^{n-r}}S_{\mathfrak n_1}\Bigr)\cap {y}^{n-r-1}\bigcap_{\mathfrak n_2\in C_2}\mathrm{adj}\Bigl(\frac{I(M)}{y^{n-r}}T_{\mathfrak n_2}\Bigr)\cap R.
\end{align*}
The third equality  holds since $\frac{I(M)}{x^{n-r}}S\supset (\mathfrak mS)^l$ and $\frac{I(M)}{y^{n-r}}S\supset (\mathfrak mT)^l$ for some natural number $l$.
By Proposition \ref{properties of module in K}, we have $\frac{I(M)}{x^{n-r}}S_{\mathfrak n_1}=I(MS_{\mathfrak n_1}),\frac{I(M)}{y^{n-r}}T_{\mathfrak n_2}=I(MT_{\mathfrak n_2})$ and $MS_{\mathfrak n_1},MT_{\mathfrak n_2}$ are integrally closed modules.
Therefore 
\begin{align*}
&{x}^{n-r-1}\bigcap_{\mathfrak n_1\in C_1}\mathrm{adj}\Bigl(\frac{I(M)}{x^{n-r}}S_{\mathfrak n_1}\Bigr)\cap {y}^{n-r-1}\bigcap_{\mathfrak n_2\in C_2}\mathrm{adj}\Bigl(\frac{I(M)}{y^{n-r}}T_{\mathfrak n_2}\Bigr)\cap R\\
&={x}^{n-r-1}\bigcap_{\mathfrak n_1\in C_1}\mathrm{adj}\Bigl(I(MS_{\mathfrak n_1})\Bigr)\cap {y}^{n-r-1}\bigcap_{\mathfrak n_2\in C_2}\mathrm{adj}\Bigl(I(MT_{\mathfrak n_2})\Bigr)\cap R.
\end{align*}
Let  $(A/x)_{\mathfrak n_1}$ denote the matrix with entries in $S_{\mathfrak n_1}$ obtained by dividing each entry of $A$ by $x$, where $\mathfrak n_1\in C_1$.
Then $(A/x)_{\mathfrak n_1}$ is an $n\times n-r$ presenting matrix for the finitely generated torsion-free  integrally closed module $MS_{\mathfrak n_1}$  (See the proof of Proposition 2.5 in \cite{M}).
By Proposition \ref{buchsbaum rim} and the induction hypothesis, 
$\mathrm{adj}\Bigl(I(MS_{\mathfrak n_1})\Bigr)=I_{n-r-1}\big((A/x)_{\mathfrak n_1}\big)$.
Therefore by   Proposition \ref{intersection},
\begin{align*}
{x}^{n-r-1}\bigcap_{\mathfrak n_1\in C_1}\mathrm{adj}\Bigl(I(MS_{\mathfrak n_1})\Bigr)\cap R&={x}^{n-r-1}\bigcap_{\mathfrak n_1\in C_1}I_{n-r-1}\big((A/x)_{\mathfrak n_1}\big)\cap R\\
&=\bigcap_{\mathfrak n_1\in C_1}I_{n-r-1}(A)S_{\mathfrak n_1}\cap R\\
&=I_{n-r-1}(A)S\cap R\\
&=I_{n-r-1}(A).
\end{align*}
In the same way as above, we have
\begin{align*}
{y}^{n-r-1}\bigcap_{\mathfrak n_2\in C_2}\mathrm{adj}\Bigl(I(MT_{\mathfrak n_2})\Bigr)\cap R
=I_{n-r-1}(A).
\end{align*}
Hence $$\mathrm{adj}(I(M))=I_{n-r-1}(A).$$

\end{proof}

\begin{cor}\label{adj=N:M}
Let $(R,\mathfrak m)$ be a $2$-dimensional regular local ring with infinite residue field.
Let $M$ be a finitely generated integrally closed torsion-free $R$-module and $N$ be any minimal reduction of $M$.
Then 
$$\mathrm{adj}(I(M))=(N:_RM).$$

\end{cor}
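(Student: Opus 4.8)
The plan is to combine Theorem \ref{Main} with an analysis of the finite-length module $M/N$. Since Theorem \ref{Main} already gives $\mathrm{adj}(I(M)) = I_{n-r-1}(A)$, it suffices to prove $(N:_R M) = I_{n-r-1}(A)$. First I would invoke Rees's Theorem \ref{Rees theorem2}: any minimal reduction $N$ of the non-free module $M$ is minimally generated by $r+1$ elements, and these generators may be taken to form part of a minimal generating set $m_1,\dots,m_n$ of $M$. Fixing such a generating set with $N = \langle m_1,\dots,m_{r+1}\rangle$, I would arrange the $n\times(n-r)$ presenting matrix $A$ accordingly and let $\bar A$ denote the submatrix formed by its bottom $n-r-1$ rows. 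A short diagram chase then identifies $M/N$ with $\mathrm{coker}(\bar A)$, which has finite length (because $N$ is a reduction), is minimally presented by the $(n-r-1)\times(n-r)$ matrix $\bar A$, and satisfies $(N:_R M) = \mathrm{Ann}_R(M/N)$.

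The heart of the argument is the identity $\mathrm{Ann}_R(M/N) = \mathrm{Fitt}_0(M/N) = I_{n-r-1}(\bar A)$. Here I would use that $M/N$ has finite length, so $I_{n-r-1}(\bar A)$ has grade $2$ and the Buchsbaum--Rim complex $0\to R\xrightarrow{\psi}R^{n-r}\xrightarrow{\bar A}R^{n-r-1}\to M/N\to 0$ is its minimal free resolution, the entries of $\psi$ being, up to sign, the maximal minors of $\bar A$, which generate $I_{n-r-1}(\bar A)$. The inclusion $\mathrm{Fitt}_0\subseteq\mathrm{Ann}$ is automatic. For the reverse inclusion, any $a\in\mathrm{Ann}_R(M/N)$ makes multiplication by $a$ null-homotopic on this resolution; reading off the top homotopy $h\colon R^{n-r}\to R$ gives $a = h\circ\psi$, an $R$-linear combination of the entries of $\psi$, whence $a\in I_{n-r-1}(\bar A)$. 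This yields $(N:_R M) = I_{n-r-1}(\bar A)$, and since $\bar A$ is a submatrix of $A$ we get $(N:_R M)\subseteq I_{n-r-1}(A)$.

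For the opposite inclusion $I_{n-r-1}(A)\subseteq(N:_R M)$ I would appeal to the Fitting-ideal expression for the core due to Mohan \cite{M}, namely $\mathrm{core}(M) = I_{n-r-1}(A)\,M$: since $\mathrm{core}(M)$ is contained in every reduction of $M$, in particular $I_{n-r-1}(A)\,M = \mathrm{core}(M)\subseteq N$, i.e.\ $I_{n-r-1}(A)\subseteq(N:_R M)$. Combining the two inclusions with Theorem \ref{Main} gives $\mathrm{adj}(I(M)) = I_{n-r-1}(A) = (N:_R M)$; as the argument is run for an arbitrary minimal reduction, it yields the stated equality for every such $N$.

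I expect the main obstacle to be the Fitting-ideal identity $\mathrm{Ann}_R(M/N) = I_{n-r-1}(\bar A)$: a naive Cramer's-rule estimate only gives $\mathrm{Ann}_R(M/N)\subseteq\sqrt{I_{n-r-1}(\bar A)}$, and it is the exactness of the Buchsbaum--Rim resolution of the finite-length module $M/N$ (equivalently, that $M/N$ is a perfect module of grade $2$ presented by a matrix with exactly one more column than rows) that upgrades this to an equality. A secondary subtlety is matching the minor ideal $I_{n-r-1}(\bar A)$ of the truncated matrix with the full Fitting ideal $I_{n-r-1}(A)$ appearing in Theorem \ref{Main}; this is precisely what the core inclusion from \cite{M} supplies.
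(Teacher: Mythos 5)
Your proof is correct, but it does more work than the paper's, which is essentially a two-line citation: after invoking Theorem \ref{Main} to get $\mathrm{adj}(I(M))=I_{n-r-1}(A)$ (exactly as you do), the paper simply quotes Corollary 2.6 of \cite{M} for the identity $I_{n-r-1}(A)=(N:_RM)$ and stops. You instead re-derive that identity, and your derivation splits into two genuinely different ingredients. The inclusion $(N:_RM)\subseteq I_{n-r-1}(A)$ --- identifying $M/N$ with the cokernel of the truncated $(n-r-1)\times(n-r)$ matrix $\bar A$ and proving $\mathrm{Ann}_R(M/N)=\mathrm{Fitt}_0(M/N)=I_{n-r-1}(\bar A)$ via exactness of the length-two Buchsbaum--Rim (Hilbert--Burch type) resolution plus the null-homotopy argument --- is in substance a proof of the paper's Lemma \ref{B^t} (Mohan's Lemma 2.2), and it is sound: finite length of $M/N$ gives $I_{n-r-1}(\bar A)$ grade $2$, which is exactly what acyclicity requires. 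For the reverse inclusion you invoke Mohan's core formula $\mathrm{core}(M)=I_{n-r-1}(A)M$ (the paper's Theorem \ref{thm in M}) together with $\mathrm{core}(M)\subseteq N$; this is legitimate and non-circular, since that formula is proved in \cite{M} independently of the present paper, but note that you are wheeling in the deepest theorem of \cite{M} --- and this is the only place where integral closedness of $M$ enters your argument --- to obtain what is, in Mohan's development, an earlier and more elementary fact. The trade-off is that your version makes transparent why annihilator equals Fitting ideal here, whereas the paper outsources everything to \cite{M}. Two small points to tidy: the degenerate cases (free $M$, where $N=M$ and both sides equal $R$; and $\nu_R(M)=r+1$, where $M/N=0$ and $\bar A$ is empty) should be dispatched by the convention $I_s(A)=R$ for $s\le 0$; and your appeal to Theorem \ref{Rees theorem2} should be phrased with care, since as stated it only asserts the existence of an $(r+1)$-generated minimal reduction, while you need that the given arbitrary minimal reduction $N$ is $(r+1)$-generated and extends to a minimal generating set of $M$ --- a standard fact over an infinite residue field, and the same hypothesis under which Lemma \ref{B^t} operates.
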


\begin{proof}
Let $r=\mathrm{rank}_R(M)$ and $A$ be an $n\times n-r$ presenting matrix for $M$.
Then $\mathrm{adj}(I(M))=I_{n-r-1}(A)$ by Theorem \ref{Main}. 
We have $I_{n-r-1}(A)=(N:_RM)$ by Corollary 2.6 in \cite{M}.
Therefore $\mathrm{adj}(I(M))=(N:_RM)$.
\end{proof}

Now we introduce some notation: $\mathrm{adj}^0(\mathfrak a)=\mathfrak a$ and, for $m\ge1$, $\mathrm{adj}^m(\mathfrak a)=\mathrm{adj}^{m-1}(\mathrm{adj}(\mathfrak a))$.

\begin{prop} {\rm(Proposition 3.16 in \cite{HS})}\label{adj^m in HS}
Let $(R,\mathfrak m)$ be a $2$-dimensional regular local ring with infinite residue field and $\mathfrak a$ be an $\mathfrak m$-primary integrally closed ideal.
Let $A$ be an $n\times n-1$ presenting matrix for $\mathfrak a$.
Then for $m\ge 0$,
$$\mathrm{adj}^m(\mathfrak a)=I_{n-1-m}(A).$$
\end{prop}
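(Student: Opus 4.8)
The plan is to induct on $m$, using Theorem \ref{Main} as the engine together with the fact (Proposition \ref{basic properties}(1)) that every iterate $\mathrm{adj}^m(\mathfrak a)$ is again an integrally closed ideal, which is $\mathfrak m$-primary until it becomes $R$. The two base cases are $m=0$ and $m=1$. For $m=1$, since $\mathfrak a$ has rank $1$ and $I(\mathfrak a)=\mathfrak a$, Theorem \ref{Main} gives at once $\mathrm{adj}(\mathfrak a)=\mathrm{adj}(I(\mathfrak a))=I_{n-2}(A)$. For $m=0$ I must check $\mathfrak a=I_{n-1}(A)$; this is the identification of $I(\mathfrak a)$ with the ideal of maximal minors of its presenting matrix, which for an $\mathfrak m$-primary ideal is exactly the Hilbert--Burch theorem applied to the free resolution $0\to R^{n-1}\to R^{n}\to\mathfrak a\to 0$ whose middle map is given by $A$.

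For the inductive step, suppose $\mathrm{adj}^{m-1}(\mathfrak a)=I_{n-m}(A)$ and set $\mathfrak b=\mathrm{adj}^{m-1}(\mathfrak a)$. If $\mathfrak b=R$ then $\mathrm{adj}^m(\mathfrak a)=R=I_{n-1-m}(A)$ and there is nothing to prove, so assume $\mathfrak b$ is an integrally closed $\mathfrak m$-primary ideal. Let $n'=\nu_R(\mathfrak b)$ and let $B$ be an $n'\times(n'-1)$ presenting matrix for $\mathfrak b$. Applying Theorem \ref{Main} to $\mathfrak b$ (again rank $1$, with $I(\mathfrak b)=\mathfrak b$) gives
$$\mathrm{adj}^m(\mathfrak a)=\mathrm{adj}(\mathfrak b)=I_{n'-2}(B).$$
Thus the statement reduces to two facts about how the data of a complete $\mathfrak m$-primary ideal changes under a single adjoint: first, that the number of generators drops by exactly one, $n'=n-(m-1)$, so that $n'-2=n-1-m$; and second, that the presenting matrix $B$ of $\mathfrak b$ can be chosen so that its Fitting ideals agree with those of $A$ in the relevant degree, $I_{n-1-m}(B)=I_{n-1-m}(A)$. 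Granting both, $\mathrm{adj}^m(\mathfrak a)=I_{n'-2}(B)=I_{n-1-m}(B)=I_{n-1-m}(A)$, which closes the induction.

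The first fact is classical for complete ideals in a two-dimensional regular local ring: one has $\nu_R(\mathfrak c)=\mathrm{ord}_R(\mathfrak c)+1$ for every integrally closed $\mathfrak m$-primary $\mathfrak c$, and passing to the adjoint decreases the order by one, so $\nu_R$ drops by one at each step and reaches $R$ precisely when $m=\mathrm{ord}_R(\mathfrak a)=n-1$, matching $I_0(A)=R$. The real obstacle is the second fact, the agreement of the lower Fitting ideals of the presenting matrices of $\mathfrak b$ and of $\mathfrak a$; I stress that this is genuinely equivalent to the assertion being proved and so cannot be had for free. I would attack it by the same mechanism as the proof of Theorem \ref{Main}: localize on the charts $S=R[\frac{\mathfrak m}{x}]$ and $T=R[\frac{\mathfrak m}{y}]$, on which (as in the proof of Theorem \ref{Main}) the presenting matrix passes to $(A/x)_{\mathfrak n}$ and the associated module transform has strictly smaller Buchsbaum--Rim multiplicity by Proposition \ref{buchsbaum rim}; the good localization behaviour of the adjoint in Proposition \ref{prop 1.3.1 in L}, combined with the glueing of Proposition \ref{intersection}, then lets one descend the Fitting-ideal identity from the charts back to $R$, exactly as in the displayed computation proving Theorem \ref{Main}. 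The delicate bookkeeping of which minors of $A$ survive the division by $x$ on each chart is where the main difficulty lies.
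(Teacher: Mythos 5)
First, a point of logic: the paper itself gives no proof of this statement --- it is quoted verbatim as Proposition 3.16 of \cite{HS} and used as a black box (notably in Lemma \ref{A=B}) --- so your proposal has to stand on its own. Your base cases do stand: $m=0$ is Hilbert--Burch, and $m=1$ follows from Theorem \ref{Main} applied to the rank-one module $\mathfrak a$, which is not circular within the paper since the proof of Theorem \ref{Main} nowhere uses the present proposition. The genuine gap is the inductive step, and you flag it yourself: you reduce the assertion to the identity $I_{n-1-m}(B)=I_{n-1-m}(A)$, where $B$ presents $\mathfrak b=\mathrm{adj}^{m-1}(\mathfrak a)$, you correctly observe that this identity is ``genuinely equivalent to the assertion being proved,'' and you then offer only a programme for establishing it. A proof that terminates in an equivalent unproved claim plus a sketch is not a proof; all of the content of the proposition is concentrated in exactly that step.

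Two concrete reasons your sketch does not close the gap. First, you take $B$ to be an arbitrary minimal presentation of $\mathfrak b$, which gives no handle for comparing its minors with those of $A$; the comparison only becomes tractable after one realizes a presentation of $\mathrm{adj}(\mathfrak a)=(I:_R\mathfrak a)$ explicitly in terms of $A$, namely as $B^t$ where $B$ is the $(n-2)\times(n-1)$ submatrix of $A$ obtained by deleting the two rows corresponding to a minimal reduction $I$ (Lemma \ref{B^t} with $r=1$, together with Corollary \ref{adj=N:M}); then $I_{n-1-m}(B)\subset I_{n-1-m}(A)$ is trivial and the entire difficulty is the reverse inclusion. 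Second, the mechanism of the proof of Theorem \ref{Main} does not formally iterate: $S=R[\frac{\mathfrak m}{x}]$ is a blowup chart, not a localization of $R$, so Proposition \ref{prop 1.3.1 in L} does not apply to $S$ itself, and the chart formula of Proposition \ref{basic properties}(4) computes a \emph{single} adjoint. To run it for $\mathrm{adj}^m$ you would need to know that forming adjoints commutes with passing to transforms on the charts, which is again essentially the statement being proved; moreover the gluing step needs the contraction $I_{n-1-m}(B)S\cap R=I_{n-1-m}(B)$, i.e.\ integral closedness of these lower Fitting ideals, which is also part of the conclusion. This is precisely why the paper's Lemma \ref{A=B}, which proves this kind of Fitting-ideal comparison for modules by the chart induction you describe, takes the present proposition as an input rather than reproving it --- so you cannot borrow that lemma without circularity. (Your ``first fact,'' that $\mathrm{ord}_R$ of an integrally closed $\mathfrak m$-primary ideal drops by exactly one under $\mathrm{adj}$, is true but is likewise asserted without proof or citation; as you have set things up it is most naturally a consequence of the proposition rather than an available ingredient.) The honest conclusion is that your argument proves the cases $m\le 1$ and otherwise reformulates the problem; to complete it one must reproduce the core of Huneke and Swanson's original argument for the comparison $I_{n-1-m}(A)=I_{n-1-m}(B)$.
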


\begin{lem}  {\rm(Lemma 2.2 in \cite{M})}\label{B^t}
Let $(R,\mathfrak m)$ be a $2$-dimensional regular local ring with infinite residue field, $M$ be a finitely generated torsion-free non-free $R$-module of rank $r$ and $N$ be a minimal reduction of $M$.
Let ${\bf m}_1,\dots, {\bf m}_{n}$ be a set of minimal generators for $M$ such that  the first $r+1$ of these generators generate $N$.
Let $A$ be an $n\times n-r$ presenting matrix for $M$ with respect to  ${\bf m}_1,\dots, {\bf m}_{n}$ and $B$ be the $n-r-1\times n-r$ matrix obtained by deleting the first $r+1$ rows of $A$.
Then $B^t$ presents the ideal $(N:_RM)$, where $B^t$ is the transpose of $B$.
Furthermore, $I_{n-r-1}(B)=(N:_RM)$.
\end{lem}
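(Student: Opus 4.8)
The plan is to funnel both assertions through the cokernel description of $M$ and then invoke the Hilbert--Burch theorem. Writing $G=R^n$ for the free module on a basis $e_1,\dots,e_n$ that maps onto $\mathbf m_1,\dots,\mathbf m_n$, the presenting matrix realizes $M=G/\mathrm{im}(A)=\mathrm{coker}(A)$, with $A\colon R^{n-r}\to G$ the inclusion of the relation module. Since $N$ is generated by the first $r+1$ of the $\mathbf m_i$, we get $M/N\cong G/(\langle e_1,\dots,e_{r+1}\rangle+\mathrm{im}(A))$, and projecting $G$ onto its last $n-r-1$ coordinates sends $\mathrm{im}(A)$ onto the column space of the matrix $B$ obtained from $A$ by deleting the first $r+1$ rows. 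Hence $M/N\cong\mathrm{coker}(B)$ with $B$ of size $(n-r-1)\times(n-r)$. That $B$ has exactly one more column than row --- which is precisely where the hypothesis that the minimal reduction $N$ is generated by $r+1=\mathrm{rank}_R(M)+1$ elements enters --- is what makes the argument run; I may assume $n>r+1$, the case $n=r+1$ (where $N=M$ and both sides equal $R$) being trivial.

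First I would check that $M/N$ has finite length. Localizing at a height-one prime $\mathfrak p$ makes $R_{\mathfrak p}$ a discrete valuation ring, over which a finitely generated torsion-free module is free and equal to its own integral closure; as $N_{\mathfrak p}$ is a reduction of $M_{\mathfrak p}$ this forces $M_{\mathfrak p}=\overline{N_{\mathfrak p}}=N_{\mathfrak p}$, so $(M/N)_{\mathfrak p}=0$. Thus $M/N$ is supported only at $\mathfrak m$ and $(N:_RM)=\mathrm{Ann}_R(M/N)$ is $\mathfrak m$-primary of grade $2$. Being a nonzero finite-length module over the two-dimensional regular ring $R$, $M/N$ has projective dimension $2$, so its second syzygy $\ker(B)\subset R^{n-r}$ is free; a rank count (the image of $B$ has full rank $n-r-1$) gives $\ker(B)\cong R$. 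This produces a free resolution
$$0\to R\xrightarrow{\psi}R^{n-r}\xrightarrow{B}R^{n-r-1}\to M/N\to 0,$$
in which $\psi$ is a column vector $(a_1,\dots,a_{n-r})^t$.

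Next I would dualize into $R$. Because $M/N$ is torsion of grade $2$, $\mathrm{Hom}_R(M/N,R)=\mathrm{Ext}^1_R(M/N,R)=0$, and the dual complex becomes exact on the left:
$$0\to R^{n-r-1}\xrightarrow{B^t}R^{n-r}\xrightarrow{\psi^t}R\to\mathrm{Ext}^2_R(M/N,R)\to 0.$$
So $\mathrm{Ext}^2_R(M/N,R)\cong R/\mathrm{im}(\psi^t)$ is cyclic, with $\mathrm{im}(\psi^t)=(a_1,\dots,a_{n-r})$. By local duality over the Gorenstein ring $R$, this $\mathrm{Ext}^2$ is the Matlis dual of the finite-length module $M/N$, so it has the same annihilator; being cyclic, its annihilator is exactly $\mathrm{im}(\psi^t)$. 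Therefore $\mathrm{im}(\psi^t)=\mathrm{Ann}_R(M/N)=(N:_RM)$, and the exactness of the dual complex shows that $R^{n-r-1}\xrightarrow{B^t}R^{n-r}\to(N:_RM)\to 0$ is exact, i.e. $B^t$ presents $(N:_RM)$. This is the first assertion.

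Finally, the dual complex is a length-one free resolution $0\to R^{n-r-1}\xrightarrow{B^t}R^{n-r}\to(N:_RM)\to 0$ of the $\mathfrak m$-primary ideal $(N:_RM)$, so the Hilbert--Burch theorem yields a nonzerodivisor $a$ with $(N:_RM)=a\cdot I_{n-r-1}(B^t)=a\cdot I_{n-r-1}(B)$, the last equality because minors are unchanged under transposition. If $a$ were a non-unit it would lie in some height-one prime, forcing $(N:_RM)\subseteq(a)$ to have height at most one, contradicting grade $2$; hence $a$ is a unit and $(N:_RM)=I_{n-r-1}(B)$, the second assertion. The step I expect to be the main obstacle is precisely the identification $\mathrm{im}(\psi^t)=(N:_RM)$: one always has the Fitting inclusion $I_{n-r-1}(B)=\mathrm{Fitt}_0(M/N)\subseteq\mathrm{Ann}(M/N)=(N:_RM)$, and for a general finite-length module this inclusion is strict, so it is only the one-off-square shape of $B$ together with the duality computation that upgrades it to an equality.
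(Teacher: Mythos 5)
The paper never proves this lemma: it is quoted, statement only, from Lemma 2.2 of \cite{M}, so there is no in-paper argument to compare against and your proposal must stand on its own. It does; the argument is correct. Your identification $M/N\cong\mathrm{coker}(B)$ is consistent with the paper's definition of a presenting matrix (the exact sequence $0\to K\to G\to M^{**}\to M^{**}/M\to 0$ exhibits $M$ as $\mathrm{coker}(A)$ via $e_i\mapsto \mathbf{m}_i$); the localization at height-one primes does give that $M/N$ has finite length (the one point left tacit is that $N_{\mathfrak p}$ is a reduction of $M_{\mathfrak p}$, which holds because every DVR between $R_{\mathfrak p}$ and $K$ contains $R$, so $M\subset\overline{N}$ localizes to $M_{\mathfrak p}\subset\overline{N_{\mathfrak p}}$); the syzygy, rank and grade computations producing the resolution $0\to R\to R^{n-r}\to R^{n-r-1}\to M/N\to 0$ are standard and correctly executed; local duality does identify $\mathrm{Ext}^2_R(M/N,R)$ with the Matlis dual of $M/N$, hence with a cyclic module whose annihilator is $(N:_RM)$, which is exactly what is needed both to see that $B^t$ presents $(N:_RM)$ and to put $(N:_RM)$ in Hilbert--Burch position; and the final unit argument (an $\mathfrak m$-primary ideal cannot lie in a height-one principal ideal) legitimately converts $(N:_RM)=a\,I_{n-r-1}(B)$ into the asserted equality. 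Your closing remark is also the right diagnosis of where the content lies: it is the one-off-square shape of $B$, forced by the minimal reduction being $(r+1)$-generated, together with duality, that upgrades the general Fitting inclusion $\mathrm{Fitt}_0(M/N)\subseteq\mathrm{Ann}(M/N)$ to an equality. This dualize-then-Hilbert--Burch route is the natural homological proof and is in the spirit of \cite{M}; note also that, consistently with the lemma's hypotheses, your argument nowhere uses integral closedness of $M$, only torsion-freeness and the minimal reduction structure.
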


\begin{lem}  \label{A=B}
Let $(R,\mathfrak m)$ be a $2$-dimensional regular local ring with infinite residue field, $M$ be a finitely generated integrally closed torsion-free non-free $R$-module of rank $r$ and $N$ be a minimal reduction of $M$.
Let ${\bf m}_1,\dots, {\bf m}_{n}$ be a set of minimal generators for $M$ such that  the first $r+1$ of these generators generate $N$.
Let $A$ be an $n\times n-r$ presenting matrix for $M$ with respect to  ${\bf m}_1,\dots, {\bf m}_{n}$ and $B$ be the $n-r-1\times n-r$ matrix obtained by deleting the first $r+1$ rows of $A$.
Then for $m\ge 1$, $$I_{n-r-m}(A)=I_{n-r-m}(B)=\mathrm{adj}^m(I(M)).$$

\end{lem}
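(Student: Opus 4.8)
The plan is to prove the two asserted equalities in turn: first the identity $I_{n-r-m}(B)=\mathrm{adj}^m(I(M))$, which follows cleanly from the ideal-theoretic results already available, and then the identity $I_{n-r-m}(A)=I_{n-r-m}(B)$, which is where the real work lies. As usual I would first dispose of the free case (where every relevant ideal is $R$) and assume $M$ non-free, so that $(N:_RM)=\mathrm{adj}(I(M))$ is $\mathfrak m$-primary.

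For the $B$-identity I would argue entirely about the ideal $\mathfrak a:=(N:_RM)$. By Lemma \ref{B^t} the transpose $B^t$ is an $(n-r)\times(n-r-1)$ presenting matrix for $\mathfrak a$, and by Corollary \ref{adj=N:M} we have $\mathfrak a=\mathrm{adj}(I(M))$, which is an $\mathfrak m$-primary integrally closed ideal by Proposition \ref{basic properties}(1). Thus Proposition \ref{adj^m in HS} applies to $\mathfrak a$ with presenting matrix $B^t$ (the role of $n$ there being played by $n-r$), giving $\mathrm{adj}^k(\mathfrak a)=I_{n-r-1-k}(B^t)=I_{n-r-1-k}(B)$ for all $k\ge0$, where I use that a matrix and its transpose have the same ideals of minors. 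Since $\mathrm{adj}^k(\mathfrak a)=\mathrm{adj}^{k+1}(I(M))$, setting $k=m-1$ yields $\mathrm{adj}^m(I(M))=I_{n-r-m}(B)$ for every $m\ge1$; in particular each $I_{n-r-m}(B)$ is integrally closed.

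It remains to prove $I_{n-r-m}(A)=I_{n-r-m}(B)$. One inclusion is free: since $B$ is obtained from $A$ by deleting the first $r+1$ rows, every $(n-r-m)$-minor of $B$ is also one of $A$, so $I_{n-r-m}(B)\subseteq I_{n-r-m}(A)$. For the reverse inclusion I would run the induction on the Buchsbaum--Rim multiplicity $e(M^{**}/M)$ used in Theorem \ref{Main}. Using that $I_{n-r-m}(A)$ (Proposition 2.5 in \cite{M}) and $I_{n-r-m}(B)$ (just shown) are integrally closed, choose, as in the proof of Theorem \ref{Main}, elements $x,y$ with $(x,y)=\mathfrak m$ such that $J=JS\cap R$ holds for $S=R[\tfrac{y}{x}]$ and for each of the finitely many ideals $J=I_{n-r-m}(A),\,I_{n-r-m}(B)$ simultaneously. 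For each maximal ideal $\mathfrak n$ of $S$ containing $\mathfrak mS$, the module $MS_{\mathfrak n}$ is integrally closed with $e((MS_{\mathfrak n})^{**}/MS_{\mathfrak n})<e(M^{**}/M)$ by Proposition \ref{buchsbaum rim}; its presenting matrix is $(A/x)_{\mathfrak n}$, deleting the first $r+1$ rows of $(A/x)_{\mathfrak n}$ gives $(B/x)_{\mathfrak n}$, and the first $r+1$ generators generate the minimal reduction $NS_{\mathfrak n}$. The induction hypothesis therefore gives $I_{n-r-m}((A/x)_{\mathfrak n})=I_{n-r-m}((B/x)_{\mathfrak n})$, i.e. $I_{n-r-m}(A)S_{\mathfrak n}=I_{n-r-m}(B)S_{\mathfrak n}$ after clearing the factor $x^{-(n-r-m)}$ coming from dividing each entry of $A$ (resp. $B$) by $x$. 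Intersecting over all such $\mathfrak n$, applying Proposition \ref{intersection}, and using the choice of $x,y$ then descends the local equality back to $I_{n-r-m}(A)=I_{n-r-m}(B)$.

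I expect the main obstacle to be exactly this reverse inclusion: the $(n-r-m)$-minors of $A$ that involve some of the first $r+1$ rows a priori enlarge $I_{n-r-m}(B)$, and there is no elementary row or column manipulation forcing them into the ideal of the lower block, which is why the inductive descent through $e(M^{**}/M)$ is needed rather than a direct matrix computation. Two points require care: that the minimal-reduction structure (the first $r+1$ generators) and the row-deletion producing $B$ descend compatibly to each localization $S_{\mathfrak n}$ so that the induction hypothesis genuinely applies, and the bookkeeping of the number of generators when passing from $M$ to $MS_{\mathfrak n}$, the same technical issue already finessed in the proof of Theorem \ref{Main} via the reduction to $n=\nu_R(M)$. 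Combining the two identities yields $I_{n-r-m}(A)=I_{n-r-m}(B)=\mathrm{adj}^m(I(M))$ for all $m\ge1$.
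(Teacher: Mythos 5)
Your proposal follows essentially the same route as the paper's proof: you obtain $I_{n-r-m}(B)=\mathrm{adj}^m(I(M))$ by applying Proposition \ref{adj^m in HS} to the $\mathfrak m$-primary integrally closed ideal $(N:_RM)=\mathrm{adj}(I(M))$ with presenting matrix $B^t$ (via Lemma \ref{B^t} and Corollary \ref{adj=N:M}), and you then prove $I_{n-r-m}(A)=I_{n-r-m}(B)$ by induction on $e(M^{**}/M)$, descending through the localizations $S_{\mathfrak n}$ exactly as the paper does, using Propositions \ref{buchsbaum rim} and \ref{intersection}.

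One step is, however, unjustified as written, and it is circular rather than merely imprecise: you invoke Proposition 2.5 of \cite{M} to claim that $I_{n-r-m}(A)$ is integrally closed, hence contracted from $S$, for all $m\ge 1$. That proposition covers only the first Fitting ideal, i.e.\ the case $m=1$; for $m\ge 2$ the integral closedness of $I_{n-r-m}(A)$ is precisely Kodiyalam's question, which the paper answers in Proposition \ref{adj^m=I_{n-r-m}} as a \emph{consequence} of this very lemma, so it cannot be assumed here. Fortunately your argument does not actually need it: in the final descent, contraction is required only for $I_{n-r-m}(B)$ (which you do have, since $I_{n-r-m}(B)=\mathrm{adj}^m(I(M))$ is integrally closed by Proposition \ref{basic properties}), and combining the inclusion $I_{n-r-m}(A)\subseteq x^{n-r-m}\bigcap_{\mathfrak n}I_{n-r-m}\big((A/x)_{\mathfrak n}\big)\cap R$ with your trivial inclusion $I_{n-r-m}(B)\subseteq I_{n-r-m}(A)$ yields $I_{n-r-m}(A)\subseteq I_{n-r-m}(B)S\cap R=I_{n-r-m}(B)\subseteq I_{n-r-m}(A)$, which is exactly how the paper closes the loop. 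If you delete the appeal to integral closedness of $I_{n-r-m}(A)$ and run the descent one-sidedly in this way, your proof coincides with the paper's.
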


\begin{proof}
By Theorem \ref{Main}, Corollary \ref{adj=N:M} and Lemma \ref{B^t},
$B^t$ presents $I_{n-r-1}(B)$ and 
$$I_{n-r-1}(A)=I_{n-r-1}(B)=\mathrm{adj}(I(M)).$$
Note that $I_{n-r-m}(B^t)=I_{n-r-m}(B)$.
By Proposition \ref{adj^m in HS}, we have for $m\ge 1$ $$\mathrm{adj}^m(I(M))=I_{n-r-m}(B).$$
Therefore $I_{n-r-m}(B)$ is an integrally closed ideal for $m\ge 1$ by Proposition \ref{basic properties}.
By  Proposition \ref{properties of module in K}, we can choose $x\in \mathfrak m\setminus \mathfrak m^2$  such that 
$$I_{n-r-m}(B)=I_{n-r-m}(B)R[\frac{\mathfrak m}{x}]\cap R.$$
Let $S=R[\frac{\mathfrak m}{x}]$.
Let $C$ be the set of the maximal ideals of $S$ containing $\mathfrak mS$. 
Let  $(A/x)_{\mathfrak n}$ (respectively $(B/x)_{\mathfrak n}$) denote the matrix with entries in $S_{\mathfrak n}$ obtained by dividing each entry of $A$ (respectively $B$) by $x$, where $\mathfrak n\in C$.
Then $$x^{n-r-m}\bigcap_{\mathfrak n\in C}I_{n-r-m}\big((B/x)_\mathfrak n\big)\cap S=\bigcap_{\mathfrak n\in C}I_{n-r-m}(B)S_\mathfrak n\cap S= I_{n-r-m}(B)S$$ by Proposition \ref{intersection} and 
$(A/x)_{\mathfrak n}$ is an $n\times n-r$ presenting matrix for the finitely generated torsion-free  integrally closed module $MS_{\mathfrak n}$  (See the proof of Proposition 2.5 in \cite{M}).
We will prove that $I_{n-r-m}(A)=I_{n-r-m}(B)$ by induction on $e(M^{**}/M)$.
By Proposition \ref{buchsbaum rim} and the induction hypothesis, we have
$$I_{n-r-m}\big((A/x)_\mathfrak n\big)=I_{n-r-m}\big((B/x)_\mathfrak n\big).$$
Hence
\begin{align*}
I_{n-r-m}(A)&\subset x^{n-r-m}\bigcap_{\mathfrak n\in C}I_{n-r-m}\big((A/x)_\mathfrak n\big)\cap R\\
&= x^{n-r-m}\bigcap_{\mathfrak n\in C}I_{n-r-m}\big((B/x)_\mathfrak n\big)\cap R\\
&= I_{n-r-m}(B)S\cap R\\
&=I_{n-r-m}(B)\\
&\subset I_{n-r-m}(A).
\end{align*}
Therefore $I_{n-r-m}(A)=I_{n-r-m}(B)$.
\end{proof}

Kodiyalam in \cite{K} raised the question of which all Fitting ideals of $M$ are integrally closed for an arbitrary integrally closed module $M$ over   a $2$-dimensional regular local ring with infinite residue field.
In the same paper, Kodiyalam proved that the first Fitting ideal of $M$ is integrally closed if $M$ is so.
We  obtain the following positive answer to Kodiyalam's question.
The following proposition is a generalization of Proposition 3.16 in \cite{HS}.
\begin{prop}\label{adj^m=I_{n-r-m}}
Let $(R,\mathfrak m)$ be a $2$-dimensional regular local ring with infinite residue field and $M$ be a finitely generated integrally closed torsion-free $R$-module of  rank $r$.
Let $A$ be an $n\times n-r$ presenting matrix for $M$.
Then for $m\ge 0$
$$\mathrm{adj}^m(I(M))=I_{n-r-m}(A).$$
In particular, $I_{n-r-m}(A)$ is an integrally closed ideal for $m\ge 0$.
\end{prop}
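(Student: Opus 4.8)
The plan is to match the asserted formula against results already in hand, treating $m\ge 1$ and $m=0$ separately, with the passage from the distinguished presenting matrix of Lemma~\ref{A=B} to an arbitrary one supplied by the invariance of Fitting ideals under change of presentation. First I would dispose of the free case. If $M$ is free of rank $r$, then $I(M)=R$, so $\mathrm{adj}^m(I(M))=R$ for all $m\ge 0$ by Proposition~\ref{basic properties}(1); and since any presenting matrix $A$ realizes $M$ as a cokernel, $M\cong\mathrm{coker}(A)=R^r$, whence $I_{n-r-m}(A)=R$ for every $m\ge 0$ as well. Both sides equal $R$, and the formula is trivial in this case.

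Next I would record the structural point that drives the whole argument. The four-term exact sequence $0\to K\to G\to M^{**}\to M^{**}/M\to 0$ from Section~2 exhibits $M$ as the cokernel of the map $K\xrightarrow{A}G$, since the image of $G\to M^{**}$ is $M$ and its kernel is $\mathrm{im}(A)$. Thus $A$ is a genuine presentation of $M$, and $I_{n-r-m}(A)=\mathrm{Fitt}_{r+m}(M)$ is an invariant of $M$ that does not depend on the chosen presenting matrix. Consequently it suffices to verify the formula for a single conveniently chosen presentation.

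Now assume $M$ is non-free and fix $m\ge 1$. By Theorem~\ref{Rees theorem2} I would choose a minimal reduction $N$ of $M$ and a minimal generating set ${\bf m}_1,\dots,{\bf m}_n$ whose first $r+1$ members generate $N$, and let $A$ be the presenting matrix attached to this generating set, exactly as in Lemma~\ref{A=B}. That lemma gives $I_{n-r-m}(A)=\mathrm{adj}^m(I(M))$ for all $m\ge 1$, and by the invariance noted above this identity persists for every presenting matrix of $M$. For $m=0$ the left-hand side is $\mathrm{adj}^0(I(M))=I(M)$ by definition, so the content reduces to the equality $I(M)=I_{n-r}(A)$, which identifies the maximal minors of the $\mathrm{rank}_R(M)\times\nu_R(M)$ representing matrix $V$ with the maximal minors of $A$; this is the complementary-minor relation read off from the exact sequence $0\to R^{n-r}\xrightarrow{A}R^n\xrightarrow{V}R^r\to M^{**}/M\to 0$ (cf. Proposition~2.5 in \cite{M}). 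Together these two cases cover all $m\ge 0$.

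The final assertion is then immediate: for $m\ge 1$ the ideal $\mathrm{adj}^m(I(M))$ is an adjoint, hence integrally closed by Proposition~\ref{basic properties}(1), while for $m=0$ the ideal $I(M)$ is integrally closed by Proposition~\ref{properties of module in K}(1), so every $I_{n-r-m}(A)$ is integrally closed, which is the promised answer to Kodiyalam's question. I expect the only genuine obstacle to be the $m=0$ step, namely verifying that the ideal $I(M)$, defined through the representing matrix, coincides with the top minor ideal $I_{n-r}(A)$ of the presenting matrix; everything else is bookkeeping once one recognizes the lower minor ideals $I_{n-r-m}(A)$ as presentation-independent Fitting ideals of $M$, to which Lemma~\ref{A=B} applies verbatim.
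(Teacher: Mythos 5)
Your proposal is correct and takes essentially the same route as the paper's proof: handle the free case trivially, use Theorem \ref{Rees theorem2} to pick the distinguished generating set so that Lemma \ref{A=B} gives the case $m\ge 1$, and settle $m=0$ via the identity $I(M)=I_{n-r}(A)$ (which the paper likewise quotes from the proof of Proposition 2.2 in \cite{K}). The only difference is cosmetic: you spell out the presentation-independence of the minor ideals $I_{n-r-m}(A)$ as Fitting ideals of $M$, a step the paper leaves implicit.
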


\begin{proof}
We have $I(M)=I_{n-r}(A)$ (See the proof of Proposition 2.2 in \cite{K}).
If $M$ is a free module, then $R=I(M)=\mathrm{adj}^m(I(M))=I_{n-r-m}(A)$ for any $m$.

We assume that $M$ is a non-free module.
By Theorem \ref{Rees theorem2}, we can choose
${\bf m}_1,\dots, {\bf m}_{n}\in M$ satisfying the assumption in Lemma \ref{A=B}. 
Therefore we have
 for $m\ge 1$, $I_{n-r-m}(A)=\mathrm{adj}^m(I(M))$ by Lemma \ref{A=B}.

\end{proof}

\begin{lem}\label{l(R/a)}
Let $(R,\mathfrak m)$ be a $2$-dimensional regular local ring with infinite residue field and $\mathfrak a$ be an $\mathfrak m$-primary integrally closed ideal.
Then
$$\ell(R/\mathfrak a)=e(\mathfrak a)-e(\mathrm{adj}(\mathfrak a))+e(\mathrm{adj}^2(\mathfrak a))-e(\mathrm{adj}^3(\mathfrak a))+\cdots.$$
\end{lem}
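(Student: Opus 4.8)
The plan is to reduce the entire alternating sum to a single one-step identity and then telescope. Concretely, I would first prove that for every $\mathfrak m$-primary integrally closed ideal $\mathfrak b$ one has
$$\ell(R/\mathfrak b)+\ell(R/\mathrm{adj}(\mathfrak b))=e(\mathfrak b),\qquad(\star)$$
with the convention that both sides vanish when $\mathfrak b=R$. Granting $(\star)$, since $\mathrm{adj}(\mathfrak b)$ is again integrally closed by Proposition \ref{basic properties}(1) and is either $\mathfrak m$-primary or equal to $R$, I can apply $(\star)$ in turn to $\mathfrak a,\mathrm{adj}(\mathfrak a),\mathrm{adj}^2(\mathfrak a),\dots$ and substitute repeatedly, obtaining
$$\ell(R/\mathfrak a)=\sum_{i=0}^{m}(-1)^i e(\mathrm{adj}^i(\mathfrak a))+(-1)^{m+1}\ell(R/\mathrm{adj}^{m+1}(\mathfrak a)).$$
By Proposition \ref{adj^m=I_{n-r-m}}, applied to $\mathfrak a$ viewed as a module of rank one so that $\mathrm{adj}^m(\mathfrak a)=I_{n-1-m}(A)$, the ideal $\mathrm{adj}^m(\mathfrak a)$ equals $R$ once $m\ge n-1$; hence the error term vanishes for large $m$ and the displayed telescoping collapses to the asserted identity.

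The heart of the matter is therefore $(\star)$, and the key observation is that the adjoint is here a colon ideal. I would choose a minimal reduction $\mathfrak q=(f,g)$ of $\mathfrak b$, which exists and is generated by a system of parameters because the residue field is infinite and $\dim R=2$. Viewing $\mathfrak b$ as a torsion-free module of rank one, one has $\mathfrak b^{**}=R$ and $I(\mathfrak b)=\mathfrak b$, so Corollary \ref{adj=N:M} gives
$$\mathrm{adj}(\mathfrak b)=(\mathfrak q:_R\mathfrak b).$$
Now $\overline R:=R/\mathfrak q$ is an Artinian complete intersection, hence Gorenstein, and Matlis duality over $\overline R$ yields the length symmetry $\ell(\overline R/\overline{\mathfrak b})=\ell(0:_{\overline R}\overline{\mathfrak b})$ for the image $\overline{\mathfrak b}=\mathfrak b/\mathfrak q$. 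Unwinding the two sides gives $\ell(R/\mathfrak b)=\ell((\mathfrak q:_R\mathfrak b)/\mathfrak q)=\ell(\mathrm{adj}(\mathfrak b)/\mathfrak q)$, and combining this with the additivity of length along $\mathfrak q\subseteq\mathrm{adj}(\mathfrak b)\subseteq R$ gives
$$\ell(R/\mathfrak b)+\ell(R/\mathrm{adj}(\mathfrak b))=\ell(\mathrm{adj}(\mathfrak b)/\mathfrak q)+\ell(R/\mathrm{adj}(\mathfrak b))=\ell(R/\mathfrak q)=e(\mathfrak q)=e(\mathfrak b).$$
The last two equalities use that $\mathfrak q$ is a parameter ideal in the Cohen-Macaulay ring $R$, so its colength equals its multiplicity, and that passing to a reduction does not change the multiplicity.

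I expect the main obstacle to be the clean justification of $(\star)$ rather than the telescoping, and it splits into two points. First, one must verify that Corollary \ref{adj=N:M} genuinely computes $\mathrm{adj}(\mathfrak b)$ as the colon $(\mathfrak q:_R\mathfrak b)$ when $\mathfrak b$ is treated as a module: this requires checking that $I(\mathfrak b)=\mathfrak b$ and that an ideal-theoretic minimal reduction is simultaneously a module-theoretic one. Second, one needs the precise form of Gorenstein duality producing the colength equality $\ell(R/\mathfrak b)=\ell((\mathfrak q:_R\mathfrak b)/\mathfrak q)$. Both steps are standard but must be invoked carefully, and together they are what let me bypass the more laborious intersection-theoretic route that would compute the three colengths by Riemann-Roch on a resolution via Proposition \ref{prop 1.3.1 in L}; that alternative runs into the nuisance that the divisor attached to $\mathrm{adj}(\mathfrak b)$ need not be antinef on the chosen resolution.
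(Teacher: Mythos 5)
Your proposal is correct and is essentially the paper's own proof: both rest on the one-step identity $e(\mathfrak b)=\ell(R/\mathfrak b)+\ell(R/\mathrm{adj}(\mathfrak b))$, obtained by writing $\mathrm{adj}(\mathfrak b)$ as the colon of a two-generated minimal reduction and applying Matlis duality over the resulting Artinian Gorenstein quotient, followed by the same telescoping with termination because $\mathrm{adj}^i(\mathfrak a)=R$ for $i\gg 0$. The only cosmetic difference is that the paper cites Proposition 3.3 in \cite{L} for $\mathrm{adj}(\mathfrak b)=(\mathfrak q:_R\mathfrak b)$, which sidesteps the module-versus-ideal reduction compatibility you flag as needing care when invoking Corollary \ref{adj=N:M}.
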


\begin{proof}
First note that $\ell(R/R)=e(R)=0$ and $\mathrm{adj}^i(\mathfrak a)=R$ for $i\gg 0$ by Proposition \ref{adj^m in HS}.

Let $I$ be a minimal reduction of $\mathfrak a$.
Then we have $\mathrm{adj}(\mathfrak a)=I:\mathfrak a$ by Proposition 3.3 in \cite{L}.
By Matlis duality, 
$$\ell(I:\mathfrak a/I)=\ell(\mathrm{Hom}_{R/I}(R/\mathfrak a,R/I))=\ell(R/\mathfrak a).$$
Therefore 
\begin{align*}
e(\mathfrak a)&=\ell(R/I)=\ell(R/I:\mathfrak a)+\ell(I:\mathfrak a/I)\\
&=\ell(R/\mathrm{adj}(\mathfrak a))+\ell(R/\mathfrak a).
\end{align*}
In the same way as above, we have
\begin{align*}
e(\mathrm{adj}^i(\mathfrak a))&=\ell(R/\mathrm{adj}^{i+1}(\mathfrak a))+\ell(R/\mathrm{adj}^i(\mathfrak a)).
\end{align*}
Hence 
$$\ell(R/\mathfrak a)=e(\mathfrak a)-e(\mathrm{adj}(\mathfrak a))+e(\mathrm{adj}^2(\mathfrak a))-e(\mathrm{adj}^3(\mathfrak a))+\cdots.$$
\end{proof}

We can calculate the colenth of the first Fitting ideal of an integrally closed module over a $2$-dimensional regular local ring using the Hilbert-Samuel multiplicities of the Fitting ideals of the module.
The following corollary is a generalization of Corollary 2.3 in \cite{CLU}.

\begin{prop}
Let $(R,\mathfrak m)$ be a $2$-dimensional regular local ring with infinite residue field and $M$ be a finitely generated integrally closed torsion-free $R$-module of  rank $r$.
Let $A$ be an $n\times n-r$ presenting matrix for $M$.
Then 
$$\ell(R/I_{n-r}(A))=e(I_{n-r}(A))-e(I_{n-r-1}(A))+\cdots+(-1)^{n-r-1}e(I_{1}(A)).$$
\end{prop}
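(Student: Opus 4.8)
The plan is to combine the Fitting-ideal description of the adjoint powers, which was established in Proposition~\ref{adj^m=I_{n-r-m}}, with the colength formula for an $\mathfrak m$-primary integrally closed ideal from Lemma~\ref{l(R/a)}. The key observation is that both statements are available for exactly the ideals appearing here. First I would set $\mathfrak a=I(M)=I_{n-r}(A)$. By Proposition~\ref{properties of module in K}(1), $I(M)$ is integrally closed, and since $M$ is non-free (the free case being trivial, as $I_{n-r}(A)=R$ and every term vanishes), $I(M)$ is $\mathfrak m$-primary, so Lemma~\ref{l(R/a)} applies verbatim to $\mathfrak a$.

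The heart of the argument is then a translation between the two indexings. Lemma~\ref{l(R/a)} gives
\begin{align*}
\ell(R/I(M))=e(I(M))-e(\mathrm{adj}(I(M)))+e(\mathrm{adj}^2(I(M)))-\cdots,
\end{align*}
and Proposition~\ref{adj^m=I_{n-r-m}} identifies each term via $\mathrm{adj}^m(I(M))=I_{n-r-m}(A)$ for all $m\ge 0$. Substituting $m=0,1,2,\dots$ into the right-hand side turns the alternating sum over adjoint powers into an alternating sum over the Fitting ideals $I_{n-r}(A),I_{n-r-1}(A),\dots$, with signs $+,-,+,\dots$ reading downward in the index. I would then verify that the tail of the sum is exactly what is claimed: the Fitting ideal $I_{n-r-m}(A)$ equals $R$ precisely once the index drops to $0$ or below, so $e(I_{n-r-m}(A))=0$ there, and the last nonzero contribution is $(-1)^{n-r-1}e(I_1(A))$. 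This matches the sign $(-1)^{n-r-1}$ on the final term because the term $e(I_{n-r-m}(A))$ carries the sign $(-1)^m$ and $I_1(A)$ corresponds to $m=n-r-1$.

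The main obstacle is purely bookkeeping: confirming that the index ranges and signs line up so that the alternating series, which a priori runs to infinity, truncates exactly at $I_1(A)$ with the stated sign. This requires noting that $\mathrm{adj}^m(I(M))=R$ for $m\gg 0$ (equivalently $I_{n-r-m}(A)=R$ once $n-r-m\le 0$, by the convention $I_j(A)=R$ for $j\le 0$ recorded in Section~2), so that $e$ of these terms is $0$ and contributes nothing. I expect no genuine difficulty beyond this alignment, since the two cited results do all the structural work; the proposition is essentially their composition. I would close by remarking that the free case is immediate and that the $\mathfrak m$-primariness needed to invoke Lemma~\ref{l(R/a)} is guaranteed by the non-freeness of $M$.
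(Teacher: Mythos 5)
Your proposal is correct and is essentially identical to the paper's proof, which simply observes that the proposition follows by combining Proposition \ref{adj^m=I_{n-r-m}} (identifying $\mathrm{adj}^m(I(M))$ with $I_{n-r-m}(A)$) with the alternating-sum colength formula of Lemma \ref{l(R/a)}. The details you supply — the trivial free case, the $\mathfrak m$-primariness of $I(M)$ for non-free $M$, and the truncation of the series via $I_j(A)=R$ for $j\le 0$ — are exactly the routine bookkeeping the paper leaves implicit.
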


\begin{proof}
This proposition follows from Proposition \ref{adj^m=I_{n-r-m}} and Lemma \ref{l(R/a)}.
\end{proof}

The integrally closed property is preserved by completion.

\begin{lem}\label{completion module}
Let $(R,\mathfrak m)$ be a $2$-dimensional regular local ring with infinite residue fields and 
$\widehat{R}$ be its completion.
Let $M$ be a finitely generated integrally closed torsion-free $R$-module of rank $r$.
Then $M\otimes_R\widehat{R}$ is a finitely generated integrally closed torsion-free $\widehat{R}$-module.
\end{lem}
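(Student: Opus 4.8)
The plan is to verify the three defining properties of $M\otimes_R\widehat R$ in turn, the finiteness, torsion-freeness and rank being routine consequences of faithful flatness and the integral closedness being the substantive point. Throughout write $F=M^{**}$ (a free $R$-module), $\widehat F=F\otimes_R\widehat R$, and let $\widehat K$ be the fraction field of $\widehat R$. First I would record that $\widehat R$ is again a $2$-dimensional regular local ring with infinite residue field $\widehat R/\mathfrak m\widehat R=R/\mathfrak m$, so every result of Section $2$ is available over $\widehat R$. Since $M$ is finitely presented, double duals commute with the flat base change $R\to\widehat R$, so $(M\otimes_R\widehat R)^{**}=\widehat F$; in particular $M\otimes_R\widehat R$ embeds in the free module $\widehat F$ (hence is finitely generated and torsion-free) and $(M\otimes_R\widehat R)\otimes_{\widehat R}\widehat K=M_K\otimes_K\widehat K$ has dimension $r$. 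As $\widehat F$ is free it is integrally closed, so $\overline{M\otimes_R\widehat R}\subseteq\widehat F$. It then remains to prove $\overline{M\otimes_R\widehat R}=M\otimes_R\widehat R$, which I would split into (b) the equality $\overline{M\otimes_R\widehat R}\cap F=M$, and (c) the passage from (b) to the full statement.

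For (b), let $p\in F$ be integral over $M\otimes_R\widehat R$. By Theorem \ref{Rees theorem1} over $\widehat R$, the degree-one element $p$ is integral over $S(M\otimes_R\widehat R)$, and since the symmetric algebra commutes with base change one checks degreewise that $S_i(M\otimes_R\widehat R)=S_i(M)\otimes_R\widehat R$ inside $\mathrm{Sym}^{\widehat K}((M\otimes_R\widehat R)_{\widehat K})=\mathrm{Sym}^K(M_K)\otimes_K\widehat K$. Thus there is a homogeneous relation $p^n+c_1p^{n-1}+\dots+c_n=0$ with $c_i\in S_i(M)\otimes_R\widehat R$. The crucial observation is that $p\in F\subseteq M_K$ is defined over $R$, so each power $p^{\,n-i}$ lies in $\mathrm{Sym}^K_{n-i}(M_K)$; hence, putting $D=\sum_{i=1}^n S_i(M)\,p^{\,n-i}$, a finitely generated $R$-submodule of $\mathrm{Sym}^K_n(M_K)$, the relation rewrites as $p^n\in D\otimes_R\widehat R$. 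But $p^n\in\mathrm{Sym}^K_n(M_K)$, and faithful flatness of $R\to\widehat R$ gives $(D\otimes_R\widehat R)\cap\mathrm{Sym}^K_n(M_K)=D$; therefore $p^n\in D$, i.e. $p$ satisfies a monic relation of degree $n$ with coefficients in $S(M)$. By Theorem \ref{Rees theorem1} over $R$ this yields $p\in\overline M=M$, proving (b).

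For (c), since $F/M$ has finite length the natural map $F/M\to\widehat F/(M\otimes_R\widehat R)=(F/M)\otimes_R\widehat R$ is an isomorphism, so the composite $F\to\widehat F\to\widehat F/(M\otimes_R\widehat R)$ is surjective. Given $q\in\overline{M\otimes_R\widehat R}\subseteq\widehat F$, choose $f\in F$ with $q-f\in M\otimes_R\widehat R$. Then $f=q-(q-f)\in\overline{M\otimes_R\widehat R}$ and $f\in F$, so $f\in\overline{M\otimes_R\widehat R}\cap F=M$ by (b); hence $q\in M\otimes_R\widehat R$. This gives $\overline{M\otimes_R\widehat R}=M\otimes_R\widehat R$, as wanted.

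The main obstacle is step (b): descending an integral-dependence relation from $\widehat R$ to $R$. What makes this succeed with no excellence hypothesis on $R$ is that, after intersecting $\overline{M\otimes_R\widehat R}$ with $F$, the element $p$ and all of its powers are already defined over $R$, so the relation, whose coefficients a priori only lie in the $\widehat R$-modules $S_i(M)\otimes_R\widehat R$, can be collected inside a single finitely generated $R$-submodule $D$ of $\mathrm{Sym}^K_n(M_K)$ and then contracted back to $R$ by faithful flatness. The role of the finite-length isomorphism in (c) is precisely to reduce an arbitrary $q\in\widehat F$ to such an element $p$ of $F$, so that the descent in (b) applies.
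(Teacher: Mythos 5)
Your proof is correct, and although it shares its overall skeleton with the paper's argument (embed $M\otimes_R\widehat R$ in $\widehat F=M^{**}\otimes_R\widehat R$, apply Theorem \ref{Rees theorem1} over $\widehat R$ to get a homogeneous equation with coefficients in $S_i(M\otimes_R\widehat R)=S_i(M)\otimes_R\widehat R$, exploit the finite length of $F/M$, and finish with Theorem \ref{Rees theorem1} over $R$ and $\overline M=M$), the descent to $R$ is done by a genuinely different mechanism. The paper approximates: it chooses $t\in F$ with $s-t\in\mathfrak m^{mn}\widehat F$ and $b_i\in S_i(M)$ with $a_i-b_i\in S_i(\mathfrak m^{mn}F\otimes_R\widehat R)$, observes that the error $t^n+b_1t^{n-1}+\cdots+b_n$ lands in $S_n(\mathfrak m^{m}F\otimes_R\widehat R)\cap S_n(F)=S_n(\mathfrak m^{m}F)\subset S_n(M)$, and absorbs it into the constant term. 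You avoid approximation altogether: step (c) produces $f\in F$ \emph{exactly} congruent to $q$ modulo $M\otimes_R\widehat R$ (via $F/M\cong(F/M)\otimes_R\widehat R$), and step (b) descends the resulting exact equation by collecting it into the finitely generated $R$-module $D=\sum_iS_i(M)p^{n-i}\subset\mathrm{Sym}^K_n(M_K)$ and contracting, $(D\otimes_R\widehat R)\cap\mathrm{Sym}^K_n(M_K)=D$, by faithful flatness. The two routes are cousins---the paper's intersection $S_n(\mathfrak m^{m}F\otimes_R\widehat R)\cap S_n(F)=S_n(\mathfrak m^{m}F)$ is itself an instance of the same flat contraction---but your packaging eliminates all bookkeeping with powers of $\mathfrak m$ and error terms, at the modest cost of verifying the identifications $S_i(M\otimes_R\widehat R)=S_i(M)\otimes_R\widehat R$ and $K\otimes_R\widehat R\hookrightarrow\widehat K$, which you do correctly; it is arguably the cleaner of the two arguments.
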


\begin{proof}
Note that $M\subset M^{**}$ and $M^{**}$ is a free $R$-module of rank $r$.
Since $M\otimes_R\widehat{R}\subset M^{**}\otimes_R\widehat{R}\cong \widehat{R}^r$,  $M\otimes_R\widehat{R}$ is a  torsion-free $\widehat{R}$-module.

Note that $\overline{M\otimes_R\widehat{R}}\subset M^{**}\otimes_R\widehat{R}\cong \widehat{R}^r$.
Let $s=(s_1,\dots,s_r)\in \overline{M\otimes_R\widehat{R}}$, where $s_i\in \widehat{R}$.

By Theorem \ref{Rees theorem1}, there exist $a_i\in S_i(M\otimes_R\widehat{R})$ such that 
$$s^n+a_1s^{n-1}+\cdots+a_n=0.$$
Since $M^{**}/M$ is of finite length, we can choose $m\in \mathbb N$ such that $\mathfrak m^mM^{**}\subset M$.
We can regard $S(M^{**})$  as a subring of $S(M^{**}\otimes_R\widehat{R})$ since $S(M^{**})\cong R[z_1,\dots,z_r]$ and $S(M^{**}\otimes_R\widehat{R})\cong \widehat{R}[z_1,\dots,z_r]$. 
Choose $t_i\in R$ and $b_i\in S_i(M)$ such that $$s_i-t_i\in \mathfrak m^{mn}\widehat{R}\ \mathrm{and}\ a_i-b_i\in S_i(\mathfrak m^{mn}M^{**}\otimes_R\widehat{R}).$$
Let $t=(t_1,\dots,t_r)\in M^{**}=S_1(M^{**})$.
Then $$t^n+b_1t^{n-1}+\cdots+b_n\in S_n(\mathfrak m^{m}M^{**}\otimes_R\widehat{R})\cap S_n(M^{**})=S_n(\mathfrak m^{m}M^{**}).$$
By the choice of $m$, $S(\mathfrak m^{m}M^{**})\subset S(M)$, so that we can modify this equation to give an integral equation for $t$ over $S(M)$.
By Theorem \ref{Rees theorem1}, $t\in \overline{M}=M$.
Since $s-t\in \mathfrak m^{mn}M^{**}\otimes_R\widehat{R}\subset M\otimes_R\widehat{R}$,
it follows that $s\in M\otimes_R\widehat{R}.$
\end{proof}

\begin{cor}\label{adj completion}
Let $(R,\mathfrak m)$ and $(S,\mathfrak n)$ be  $2$-dimensional regular local rings with infinite residue fields, $f:R\to S$ be a local flat homomorphism and $M$ be a finitely generated integrally closed torsion-free $R$-module.
If  $M\otimes_RS$ is an integrally closed $S$-module, then 
$$\mathrm{adj}(I(M))S=\mathrm{adj}(I(M\otimes_RS)).$$
In particular, this holds if $S$ is the completion of $R$.
\end{cor}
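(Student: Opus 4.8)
The plan is to reduce both sides of the claimed equality to Fitting ideals by means of Theorem \ref{Main} and then exploit the fact that Fitting ideals commute with flat base change. Concretely, write $r=\mathrm{rank}_R(M)$ and let $A$ be an $n\times n-r$ presenting matrix for $M$, so that Theorem \ref{Main} gives $\mathrm{adj}(I(M))=I_{n-r-1}(A)$. If I can show that the matrix $f(A)$ obtained by applying $f$ to each entry of $A$ is an $n\times n-r$ presenting matrix for the integrally closed $S$-module $M\otimes_RS$, then Theorem \ref{Main} applied over $S$ yields $\mathrm{adj}(I(M\otimes_RS))=I_{n-r-1}(f(A))$. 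Since the minors of $f(A)$ are exactly the images under $f$ of the minors of $A$, we get $I_{n-r-1}(f(A))=I_{n-r-1}(A)S=\mathrm{adj}(I(M))S$, which is the desired conclusion.

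The crux is therefore to verify that $f(A)$ presents $M\otimes_RS$. Recall that $A$ is the matrix of the map $K\to G$ in the exact sequence
$$0\to K\to G\to M^{**}\to M^{**}/M\to 0$$
with $K,G$ free. Since $f$ is flat, tensoring this sequence with $S$ keeps it exact:
$$0\to K\otimes_RS\to G\otimes_RS\to M^{**}\otimes_RS\to (M^{**}/M)\otimes_RS\to 0,$$
and here $M^{**}\otimes_RS$ is free over $S$ with the map $K\otimes_RS\to G\otimes_RS$ represented by $f(A)$. To read off a presenting matrix for $M\otimes_RS$ I must identify $M^{**}\otimes_RS$ with $(M\otimes_RS)^{**}$ and $(M^{**}/M)\otimes_RS$ with $(M\otimes_RS)^{**}/(M\otimes_RS)$. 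Because $f$ is a local flat homomorphism of $2$-dimensional regular local rings, the fiber ring $S/\mathfrak mS$ is $0$-dimensional, so $\mathfrak mS$ is $\mathfrak n$-primary; since $\mathfrak m^kM^{**}\subset M$ for some $k$, the module $(M^{**}/M)\otimes_RS$ is annihilated by $\mathfrak m^kS$ and hence has finite length over $S$. Thus $M^{**}\otimes_RS$ is a free $S$-module containing $M\otimes_RS$ with finite-length cokernel, and by the universal property of the double dual (Proposition 2.1 in \cite{K}) it is canonically $(M\otimes_RS)^{**}$; in particular $\mathrm{rank}_S(M\otimes_RS)=r$ and $f(A)$ is an $n\times n-r$ presenting matrix for $M\otimes_RS$, as required.

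Since $M\otimes_RS$ is integrally closed by hypothesis, Theorem \ref{Main} applies over $S$ and the argument of the first paragraph yields $\mathrm{adj}(I(M))S=\mathrm{adj}(I(M\otimes_RS))$. The case of free $M$ is trivial, both ideals being the unit ideal. Finally, when $S=\widehat R$ is the completion, $f$ is a local flat homomorphism, $\widehat R$ is again a $2$-dimensional regular local ring with infinite residue field, and $M\otimes_R\widehat R$ is integrally closed by Lemma \ref{completion module}, so the hypotheses are met and the ``in particular'' statement follows.

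I expect the main obstacle to be the bookkeeping in the middle paragraph: confirming that flat base change genuinely sends a presenting matrix to a presenting matrix, which hinges on the identification $(M\otimes_RS)^{**}\cong M^{**}\otimes_RS$ and hence on the finite-length (equivalently, $\mathfrak n$-primary fiber) property. The remaining steps are formal consequences of Theorem \ref{Main} together with the functoriality of minors under ring maps.
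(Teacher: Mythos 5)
Your proof is correct and follows essentially the same route as the paper's: tensor the presentation $0\to K\to G\to M^{**}\to M^{**}/M\to 0$ with the flat map $f$, conclude that $f(A)$ presents $M\otimes_RS$, and apply Theorem \ref{Main} on both sides together with the functoriality of minors under ring maps. You are in fact slightly more careful than the paper, which asserts that the base-changed sequence exhibits a presenting matrix without spelling out the verification you give — that $(M^{**}/M)\otimes_RS$ has finite length (via the zero-dimensionality of the fiber $S/\mathfrak mS$), which is what identifies $M^{**}\otimes_RS$ with $(M\otimes_RS)^{**}$.
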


\begin{proof}Let $r=\mathrm{rank}_R(M)$.
Note that  $M\otimes_RS\subset M^{**}\otimes_RS\cong {S}^r$, so that $M\otimes_RS$ is a torsion-free $S$-module.
Let $A=(a_{i,j})_{i,j}$ be a presenting matrix for $M$.
If $M$ is a free module, then $I(M)=R$ and $I(M\otimes_RS)=S$.
Hence $\mathrm{adj}(I(M))S=\mathrm{adj}(I(M\otimes_RS))=S$.
Suppose that  $M$ is a non-free module.
Let $F=M^{**}$. 
Then We have the following exact sequence
$$0\to K\to G\to F\to F/M\to 0,$$
where $K$ and $G$ are free $R$-modules.
Note that $F$ is a free module.
Since $S$ is a faithfully flat $R$-algebra, we have the following exact sequence
$$0\to K\otimes_RS\to G\otimes_RS\to F\otimes_RS\to (F\otimes_RS)/(M\otimes_RS)\to 0.$$
Therefore the matrix  $(f(a_{i,j}))_{i,j}$  is a presenting matrix for $M\otimes_RS$.
Hence we have $\mathrm{adj}(I(M))S=\mathrm{adj}(I(M\otimes_RS))$ by Theorem \ref{Main}.

The second part follows from  Lemma \ref{completion module}.
\end{proof}

%%%%%%%%%%%%%%%%%%%%%%%%%%%%%%%%%%%%%%%%%%%%%%%%%%%%%%%%%%%%%%%%%%%%%%%%%%%%%%%%%
\section{The Core of a Module}

In this section we study the  core of a finitely generated integrally closed torsion-free module over a $2$-dimensional regular local ring.

Let us now recall the main theorem in \cite{M}. 
Mohan determined the core of a finitely generated integrally closed torsion-free module over a $2$-dimensional regular local ring.
\begin{thm}\cite{M}\label{thm in M}
Let $(R,\mathfrak m)$ be a $2$-dimensional regular local ring with infinite residue field and $M$ be a finitely generated integrally closed torsion-free $R$-module of  rank $r$.
Let $A$ be an $n\times n-r$ presenting matrix for $M$.
Then 
$$\mathrm{core}(M)=I_{n-r-1}(A)M.$$
\end{thm}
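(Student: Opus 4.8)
The plan is to prove the two inclusions of $\mathrm{core}(M)=I_{n-r-1}(A)M$ separately, writing $\mathfrak c:=I_{n-r-1}(A)$ for brevity. Throughout I use that $\mathrm{core}(M)$ is the intersection of the \emph{minimal} reductions of $M$: every reduction contains a minimal one, and by Theorem \ref{Rees theorem2} minimal reductions exist and are $(r+1)$-generated, so intersecting over minimal reductions suffices. The starting point is Corollary \ref{adj=N:M} together with Theorem \ref{Main}, which gives $\mathfrak c=\mathrm{adj}(I(M))=(N:_RM)$ for \emph{every} minimal reduction $N$; in particular the colon $(N:_RM)$ is independent of the choice of $N$.

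The inclusion $\mathfrak cM\subseteq\mathrm{core}(M)$ is then immediate. Indeed, for any minimal reduction $N$ we have $\mathfrak cM=(N:_RM)M\subseteq N$ by definition of the colon, so $\mathfrak cM$ lies in every minimal reduction and hence in their intersection $\mathrm{core}(M)$. This is the direction where the arithmetic of Theorem \ref{Main} does all the work; the reverse inclusion is the genuinely module-theoretic part.

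For the reverse inclusion $\mathrm{core}(M)\subseteq\mathfrak cM$ I would first reduce the intersection defining the core to a \emph{finite} one: since the residue field is infinite, general $(r+1)$-tuples of generators span minimal reductions, and a genericity argument shows that $\mathrm{core}(M)$ is already the intersection of finitely many general minimal reductions $N_1,\dots,N_k$. One should not expect $k=2$: already for $\mathfrak a=\mathfrak m^2$ two general minimal reductions meet strictly above $\mathrm{core}(\mathfrak a)=\mathfrak m^3$, so the number of reductions needed genuinely grows with the colength. It then remains to prove $N_1\cap\dots\cap N_k\subseteq\mathfrak cM$.

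I expect this last step to be the main obstacle, and I would handle it by induction on the Buchsbaum--Rim multiplicity $e(M^{**}/M)$, mirroring the proofs of Theorem \ref{Main} and Lemma \ref{A=B}. The free case is trivial, since then $\mathfrak c=R$ and $\mathrm{core}(M)=M$. For the inductive step, pass to the quadratic transforms $MS_{\mathfrak n}$ along a general $x\in\mathfrak m\setminus\mathfrak m^2$: these are again integrally closed by Theorem \ref{thm 5.2 in K} and Proposition \ref{properties of module in K}, and have strictly smaller multiplicity by Proposition \ref{buchsbaum rim}, so by induction their cores are computed by the Fitting ideals of the transformed matrices $(A/x)_{\mathfrak n}$. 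The two delicate points are then (i) relating $\mathrm{core}(M)$ to the cores of the transforms $MS_{\mathfrak n}$ --- this is the crux, since reductions of $M$ must be tracked faithfully through the transform --- and (ii) patching the local answers $I_{n-r-1}((A/x)_{\mathfrak n})\,MS_{\mathfrak n}$ back to $I_{n-r-1}(A)M$ by means of the intersection formula of Proposition \ref{intersection}, exactly as in the displayed computation proving Theorem \ref{Main}. Combining the two inclusions yields $\mathrm{core}(M)=\mathfrak cM=I_{n-r-1}(A)M$.
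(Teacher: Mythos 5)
First, a point of comparison: the paper does not actually prove this statement. Theorem \ref{thm in M} is imported verbatim from \cite{M}, and the paper's own contribution here (Theorem \ref{core adj}) consists of combining it with Theorem \ref{Main}. So your proposal has to stand entirely on its own. Its first half does: Theorem \ref{Main} and Corollary \ref{adj=N:M} are proved in Section 3 with no appeal to cores of modules (they rest on Kodiyalam's transform results and on Lemma 2.2/Corollary 2.6 of \cite{M}, which in Mohan's paper precede his core theorem), so there is no circularity, and the argument $I_{n-r-1}(A)M=(N:_RM)M\subseteq N$ for every minimal reduction $N$, hence $I_{n-r-1}(A)M\subseteq\mathrm{core}(M)$, is complete. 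Incidentally, your finiteness step needs no genericity: once $I_{n-r-1}(A)M\subseteq N$ is known for every minimal reduction $N$, all minimal reductions correspond to submodules of the finite-length module $M/I_{n-r-1}(A)M$ (the ideal $I_{n-r-1}(A)$ is $\mathfrak m$-primary when $M$ is not free), and an arbitrary intersection of submodules of an Artinian module equals a finite sub-intersection.

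The genuine gap is precisely the step you yourself flag as ``the crux'': in the inductive step you need the image of $\mathrm{core}(M)$ in $MS_{\mathfrak n}$ to land inside $\mathrm{core}(MS_{\mathfrak n})$ (in fact inside $x^{n-r-1}\mathrm{core}(MS_{\mathfrak n})$, for the patching in your step (ii) to come out right), and nothing you cite provides this. The formal behaviour of reductions under quadratic transform goes the wrong way: if $N$ is a reduction of $M$, then $NS_{\mathfrak n}$ is a reduction of $MS_{\mathfrak n}$, which yields only $\mathrm{core}(MS_{\mathfrak n})\subseteq\bigcap_N NS_{\mathfrak n}$, whereas your induction needs every reduction $N'$ of $MS_{\mathfrak n}$ to contain $\mathrm{core}(M)S_{\mathfrak n}$. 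Reductions of $MS_{\mathfrak n}$ are in general not extended from reductions of $M$ (they are spanned by general $S_{\mathfrak n}$-linear, not $R$-linear, combinations of the generators), so they cannot be descended, and no result quoted in the paper (from \cite{K}, \cite{HS}, or \cite{M}) supplies this compatibility of core with the transform; the needed containment is true a posteriori, but only because it follows from the very theorem being proved. So what you have is a correct, non-circular proof of the inclusion $I_{n-r-1}(A)M\subseteq\mathrm{core}(M)$, together with a plan for the hard inclusion $\mathrm{core}(M)\subseteq I_{n-r-1}(A)M$ whose decisive step is missing.
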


The following theorem is a generalization of Theorem 3.14 in \cite{HS}.

\begin{thm}\label{core adj}
Let $(R,\mathfrak m)$ be a $2$-dimensional regular local ring with infinite residue field and $M$ be a finitely generated integrally closed torsion-free $R$-module of  rank $r$.
Then 
$$\mathrm{core}(M)=\mathrm{adj}(I(M))M.$$

\end{thm}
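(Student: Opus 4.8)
The plan is to derive this as an immediate consequence of the two main technical results already established, namely Mohan's formula for the core (Theorem \ref{thm in M}) and the identification of the adjoint with a Fitting ideal (Theorem \ref{Main}). Both results are expressed in terms of the same invariant, so the theorem should reduce to a one-line substitution once these are in hand.

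First I would fix an $n\times n-r$ presenting matrix $A$ for the module $M$, which exists by the exact sequence
$$0\to K\to G\to M^{**}\to M^{**}/M\to 0$$
recorded in Section $2$, with $K$ and $G$ free. By Theorem \ref{thm in M} (Mohan's theorem), the core is computed as
$$\mathrm{core}(M)=I_{n-r-1}(A)M.$$
Separately, Theorem \ref{Main} identifies the adjoint of the ideal of maximal minors with the next Fitting ideal, giving
$$\mathrm{adj}(I(M))=I_{n-r-1}(A).$$
Combining these two displayed equalities yields
$$\mathrm{core}(M)=I_{n-r-1}(A)M=\mathrm{adj}(I(M))M,$$
which is the desired conclusion.

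One point I would verify is that both cited results refer to the same invariant: the Fitting ideal $I_{n-r-1}(A)$ is independent of the chosen presenting matrix $A$, so there is no ambiguity in passing between the two theorems, and the conventions on the size of $A$ (an $n\times n-r$ matrix, with $n$ the minimal number of generators) match in both statements. Because all the substantive work has been carried out in proving Theorem \ref{Main} and in Mohan's Theorem \ref{thm in M}, there is no genuine obstacle remaining here; the only content of the proof is recognizing that the two formulas share the common factor $I_{n-r-1}(A)$. I would therefore present the argument as a short deduction rather than attempting any further computation.
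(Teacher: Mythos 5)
Your proposal is correct and matches the paper's own proof exactly: the paper deduces Theorem \ref{core adj} by combining Theorem \ref{thm in M} (Mohan's formula $\mathrm{core}(M)=I_{n-r-1}(A)M$) with Theorem \ref{Main} ($\mathrm{adj}(I(M))=I_{n-r-1}(A)$). Your additional check that both results use the same well-defined invariant $I_{n-r-1}(A)$ is a sensible observation, but no further argument is needed.
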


\begin{proof}
This theorem follows from Theorem \ref{Main} and Theorem \ref{thm in M}.
\end{proof}

\begin{cor}\label{core finite ideal}
Let $(R,\mathfrak m)$ be a $2$-dimensional regular local ring with infinite residue field.
Let  $M$ and $N$ be  finitely generated integrally closed torsion-free $R$-modules.
Then 
$$\mathrm{core}(M\oplus N)=\mathrm{adj}(I(M)I(N))(M\oplus N).$$
In particular for $\mathfrak m$-primary integrally closed ideals $\mathfrak a, \mathfrak b$ of $R$, 
$$\mathrm{core}(\mathfrak a\oplus \mathfrak b)=\mathrm{adj}(\mathfrak a\mathfrak b)\mathfrak a\oplus \mathrm{adj}(\mathfrak a\mathfrak b)\mathfrak b.$$
\end{cor}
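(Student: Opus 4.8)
The plan is to deduce the corollary directly from Theorem \ref{core adj} applied to the module $M\oplus N$, once we have verified that $M\oplus N$ is again a finitely generated integrally closed torsion-free module and identified its invariant $I(M\oplus N)$.

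First I would check that $M\oplus N$ is integrally closed. Writing $K$ for the fraction field, we have $(M\oplus N)_K=M_K\oplus N_K$, and for any discrete valuation ring $V$ of $K$ containing $R$ the $V$-submodule $(M\oplus N)V$ generated by $M\oplus N$ is exactly $MV\oplus NV$. Hence $(v,w)\in M_K\oplus N_K$ is integral over $M\oplus N$ if and only if $v\in MV$ and $w\in NV$ for every such $V$, i.e. if and only if $v\in\overline M$ and $w\in\overline N$. Thus $\overline{M\oplus N}=\overline M\oplus\overline N=M\oplus N$, so $M\oplus N$ is integrally closed, torsion-freeness and finite generation being immediate.

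The key step is to show $I(M\oplus N)=I(M)I(N)$. Since the double dual commutes with finite direct sums, $(M\oplus N)^{**}\cong M^{**}\oplus N^{**}$, and over the local ring $R$ one has $\nu_R(M\oplus N)=\nu_R(M)+\nu_R(N)$; hence a representing matrix for $M\oplus N$ may be taken to be the block-diagonal matrix $\mathrm{diag}(A,B)$, where $A$ and $B$ are representing matrices for $M$ and $N$ of sizes $r\times\nu_R(M)$ and $s\times\nu_R(N)$, with $r=\mathrm{rank}_R(M)$ and $s=\mathrm{rank}_R(N)$. I would then compute the maximal (size $r+s$) minors of this matrix: selecting any $r+s$ columns, the resulting square submatrix is again block-diagonal, and its determinant vanishes unless exactly $r$ columns come from the $A$-block and exactly $s$ from the $B$-block, in which case it factors as a size-$r$ minor of $A$ times a size-$s$ minor of $B$. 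Therefore $I(M\oplus N)$ is generated by all such products, giving $I(M\oplus N)=I(M)I(N)$.

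Combining these two points, Theorem \ref{core adj} yields $\mathrm{core}(M\oplus N)=\mathrm{adj}(I(M\oplus N))(M\oplus N)=\mathrm{adj}(I(M)I(N))(M\oplus N)$, which is the first assertion. For the special case, an $\mathfrak m$-primary integrally closed ideal $\mathfrak a$ is a rank-one module with $\mathfrak a^{**}\cong R$, so its representing matrix is the row $(a_1,\dots,a_{\nu})$ of a minimal generating set and $I(\mathfrak a)=\mathfrak a$; likewise $I(\mathfrak b)=\mathfrak b$. Substituting, and using that multiplication by the ideal $\mathrm{adj}(\mathfrak a\mathfrak b)$ distributes over the direct sum, gives $\mathrm{core}(\mathfrak a\oplus\mathfrak b)=\mathrm{adj}(\mathfrak a\mathfrak b)\mathfrak a\oplus\mathrm{adj}(\mathfrak a\mathfrak b)\mathfrak b$. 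The main obstacle is the minor computation for the block-diagonal representing matrix, where one must argue that off-block column choices contribute zero; the remaining steps are essentially bookkeeping.
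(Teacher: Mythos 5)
Your proof is correct and follows essentially the same route as the paper: form the block-diagonal representing matrix $\mathrm{diag}(\widetilde M,\widetilde N)$ to get $I(M\oplus N)=I(M)I(N)$, apply Theorem \ref{core adj}, and use $I(\mathfrak a)=\mathfrak a$ for the $\mathfrak m$-primary case. Your extra verifications (that $M\oplus N$ is integrally closed, and the vanishing of off-block minors) are details the paper leaves implicit, but they do not change the argument.
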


\begin{proof}
Let  $\widetilde{M}$ (resp. $\widetilde{N}$) be a representing matrix for $M$ (resp. $N$).
Then 
$\begin{pmatrix}
\widetilde{M}&O\\
O&\widetilde{N}
\end{pmatrix}$
is a representing matrix for $M\oplus N$.
This implies that $I(M\oplus N)=I(M)I(N)$.
Therefore this corollary follows from Theorem \ref{core adj}.

Note that $I(\mathfrak a)=\mathfrak a$ for an $\mathfrak m$-primary ideal $\mathfrak a$.
Therefore the second part follows from the first part.
\end{proof}

The following corollary is a generalization of Corollary 3.13 in \cite{HS}.

\begin{cor}\label{core completion}
Let $(R,\mathfrak m)$ and $(S,\mathfrak n)$ be  $2$-dimensional regular local rings with infinite residue fields, $f:R\to S$ be a local flat homomorphism and $M$ be a finitely generated integrally closed torsion-free $R$-module.
If  $M\otimes_RS$ is an integrally closed $S$-module, then 
$$\mathrm{core}(M)\otimes_RS=\mathrm{core}(M\otimes_RS).$$
In particular, this holds if $S$ is the completion of $R$.
\end{cor}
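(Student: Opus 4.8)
The plan is to reduce everything to the formula $\mathrm{core}(M)=\mathrm{adj}(I(M))M$ from Theorem \ref{core adj}, combined with the base-change statement for adjoints in Corollary \ref{adj completion}. First I would apply Theorem \ref{core adj} to the $R$-module $M$ to write $\mathrm{core}(M)=\mathrm{adj}(I(M))M$ as a submodule of $M$, and then tensor with $S$ along $f$.

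The key algebraic step is the identity $(\mathfrak a M)\otimes_R S=(\mathfrak a S)(M\otimes_R S)$ for an ideal $\mathfrak a$ of $R$. Since $\mathfrak a M$ is the image of the multiplication map $\mathfrak a\otimes_R M\to M$ and $S$ is flat over $R$, tensoring with $S$ preserves this image, which yields the claimed equality. Taking $\mathfrak a=\mathrm{adj}(I(M))$ and invoking Corollary \ref{adj completion}, which identifies $\mathrm{adj}(I(M))S$ with $\mathrm{adj}(I(M\otimes_R S))$, I obtain $\mathrm{core}(M)\otimes_R S=\mathrm{adj}(I(M\otimes_R S))(M\otimes_R S)$.

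To finish I would apply Theorem \ref{core adj} a second time, now to the $S$-module $M\otimes_R S$. This requires checking that $M\otimes_R S$ is a finitely generated integrally closed torsion-free $S$-module: finite generation is automatic, torsion-freeness follows from the inclusion $M\otimes_R S\subset M^{**}\otimes_R S\cong S^r$ recorded in the proof of Corollary \ref{adj completion}, and integral closedness is exactly the hypothesis. Then Theorem \ref{core adj} gives $\mathrm{core}(M\otimes_R S)=\mathrm{adj}(I(M\otimes_R S))(M\otimes_R S)$, and comparing the two expressions yields $\mathrm{core}(M)\otimes_R S=\mathrm{core}(M\otimes_R S)$. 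The special case of completion follows because $R\to\widehat{R}$ is a local flat homomorphism of $2$-dimensional regular local rings with the same infinite residue field, while Lemma \ref{completion module} guarantees that $M\otimes_R\widehat{R}$ is integrally closed, so the hypothesis of the corollary is met.

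The main obstacle is bookkeeping rather than a deep difficulty: one must be careful that $\otimes_R S$ applied to the submodule $\mathrm{core}(M)\subset M$ genuinely produces the submodule $\mathrm{adj}(I(M))(M\otimes_R S)$ inside $M\otimes_R S$, and not merely an abstract tensor product, which is precisely where flatness of $f$ enters (it makes the natural map $\mathrm{core}(M)\otimes_R S\to M\otimes_R S$ injective). Once this point is settled, the corollary is a direct combination of Theorem \ref{core adj} and Corollary \ref{adj completion}.
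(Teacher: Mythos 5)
Your proposal is correct and follows essentially the same route as the paper: both proofs combine Theorem \ref{core adj} (applied to $M$ and to $M\otimes_R S$), Corollary \ref{adj completion}, and the flat base-change identity $(\mathfrak a M)\otimes_R S=(\mathfrak a S)(M\otimes_R S)$, with Lemma \ref{completion module} handling the completion case. The only difference is cosmetic: the paper runs the chain of equalities starting from $\mathrm{core}(M\otimes_R S)$ rather than from $\mathrm{core}(M)\otimes_R S$.
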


\begin{proof}
By Corollary \ref{adj completion} and Theorem \ref{core adj},
\begin{align*}
\mathrm{core}(M\otimes_RS)&=\mathrm{adj}(I(M\otimes_RS))(M\otimes_RS)\\
&=\Bigl(\mathrm{adj}(I(M))S\Bigr)(M\otimes_RS)\\
&=\Bigl(\mathrm{adj}(I(M))M\Bigr)\otimes_RS\\
&=\mathrm{core}(M)\otimes_RS.
\end{align*}

The second part follows from  Lemma \ref{completion module}.
\end{proof}

The following proposition is a generalization of Proposition 3.15 in \cite{HS}.

\begin{prop}
Let $(R,\mathfrak m)$ be a $2$-dimensional regular local ring with infinite residue field.
Let   $M$ and $N$ be  finitely generated integrally closed torsion-free $R$-modules.
If  $M\subset N$ and $M^{**}=N^{**}$, then
$$\mathrm{core}(M)\subset \mathrm{core}(N).$$

\end{prop}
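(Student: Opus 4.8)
The plan is to reduce everything to the formula $\mathrm{core}(M)=\mathrm{adj}(I(M))M$ from Theorem \ref{core adj} together with the analogous formula for $N$, so that the desired inclusion $\mathrm{core}(M)\subset\mathrm{core}(N)$ follows at once from the two factorwise inclusions $\mathrm{adj}(I(M))\subset\mathrm{adj}(I(N))$ and $M\subset N$. The second inclusion is part of the hypothesis; the crux is the first, which I would deduce from the ideal inclusion $I(M)\subset I(N)$ combined with the monotonicity of the adjoint recorded in Proposition \ref{basic properties}(3).

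To obtain $I(M)\subset I(N)$ I would exploit the hypothesis $M^{**}=N^{**}$. Writing $F$ for this common free module of rank $r$, we have $M\subset N\subset F$, and we may compute both $I(M)$ and $I(N)$ with respect to a single fixed basis of $F$ (recall that $I(-)$ is independent of the choices made). Let $\widetilde N$ be the $r\times \nu_R(N)$ matrix whose columns express a minimal generating set of $N$ in this basis, and let $\widetilde M$ be the corresponding matrix for $M$. Since $M\subset N$, every minimal generator of $M$ is an $R$-linear combination of the generators of $N$, so $\widetilde M=\widetilde N C$ for some matrix $C$ with entries in $R$. By the Cauchy--Binet formula each $r\times r$ minor of $\widetilde M$ is then an $R$-linear combination of the $r\times r$ minors of $\widetilde N$, whence $I(M)\subset I(N)$.

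Combining the steps, Proposition \ref{basic properties}(3) yields $\mathrm{adj}(I(M))\subset\mathrm{adj}(I(N))$, and therefore
$$\mathrm{core}(M)=\mathrm{adj}(I(M))M\subset\mathrm{adj}(I(N))N=\mathrm{core}(N)$$
by Theorem \ref{core adj}. I expect the main (though modest) obstacle to be the verification that $I(M)\subset I(N)$: once the common double dual $F$ is used to place both representing matrices over the \emph{same} basis, the inclusion is just Cauchy--Binet applied to $\widetilde M=\widetilde N C$. The degenerate cases in which $M$ or $N$ is free are handled directly, since then the relevant ideal of minors is all of $R$ and the corresponding adjoint is $R$, making the inclusion immediate.
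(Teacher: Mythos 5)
Your proposal is correct and follows essentially the same route as the paper: establish $I(M)\subset I(N)$ from the hypotheses, apply the monotonicity of the adjoint (Proposition \ref{basic properties}(3)), and conclude via Theorem \ref{core adj}. The only difference is that the paper simply asserts $I(M)\subset I(N)$, whereas you supply the justification (common basis of $F=M^{**}=N^{**}$, $\widetilde M=\widetilde N C$, Cauchy--Binet), which is exactly the right argument for that step.
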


\begin{proof}
Note that   $\mathrm{rank}_RM=\mathrm{rank}_RN$  as $M^{**}=N^{**}$.
Since $M\subset N$ and $M^{**}=N^{**}$, we have $I(M)\subset I(N)$.
This implies that $\mathrm{adj}(I(M))\subset \mathrm{adj}(I(N))$ by Proposition \ref{basic properties}.
By Theorem \ref{core adj}, we have $\mathrm{core}(M)\subset \mathrm{core}(N).$
\end{proof}

\begin{rem}
In general $\mathrm{core}(\mathfrak a)$ is not  necessarily contained in $\mathrm{core}(\mathfrak b)$ for integrally closed ideals $\mathfrak a$ and $\mathfrak b$ with $\mathfrak a\subset \mathfrak b$.
Let $R=\mathbb C[[x,y]]$ and $\mathfrak m=(x,y)$.
Then $\mathrm{core}(\mathfrak m^2)=\mathfrak m^2\mathrm{adj}(\mathfrak m^2)=\mathfrak m^3$ and $\mathrm{core}((x^2))=(x^2)$.
Therefore $\mathrm{core}((x^2))\not\subset\mathrm{core}(\mathfrak m^2)$.
\end{rem}

$\ddot{\mathrm{H}}$ubl and  Swanson proved that the following powerful property of adjoints of ideals of a $2$-dimensional regular local ring.
\begin{prop}{\rm(See page 460 in \cite{HuS})}\label{subadditivity}
Let $(R,\mathfrak m)$ be a $2$-dimensional  regular local ring with infinite residue field and  $\mathfrak a,\mathfrak b$ be  ideals of $R$.
Then $$\mathrm{adj}(\mathfrak a\mathfrak b)\subset \mathrm{adj}(\mathfrak a)\mathrm{adj}(\mathfrak b).$$
\end{prop}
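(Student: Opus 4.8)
The plan is to prove subadditivity by an induction that parallels the proof of Theorem \ref{Main}, reducing an inequality about products on $\mathrm{Spec} R$ to the same inequality on the charts of a single point blow-up. First I would reduce to the $\mathfrak m$-primary case: writing an integrally closed ideal as $\mathfrak a=g\mathfrak a_0$ with $g$ a greatest common divisor of its generators (possible since $R$ is a two-dimensional regular local ring, hence a UFD) and $\mathfrak a_0$ either the unit ideal or $\mathfrak m$-primary and integrally closed, the identities $\mathrm{adj}(g\mathfrak a_0)=g\,\mathrm{adj}(\mathfrak a_0)$ and $\mathrm{adj}(xI)=x\,\mathrm{adj}(I)$ of Proposition \ref{basic properties} let me strip off the principal factors of both $\mathfrak a$ and $\mathfrak b$ simultaneously. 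Since $\mathrm{adj}(I)=\mathrm{adj}(\overline I)$, I may also assume $\mathfrak a,\mathfrak b$ integrally closed. The induction will be on $e(\mathfrak a)+e(\mathfrak b)$. The base case is $\mathfrak a=\mathfrak m^a$, $\mathfrak b=\mathfrak m^b$, where $\mathrm{adj}(\mathfrak m^k)=\mathfrak m^{k-1}$ gives $\mathrm{adj}(\mathfrak m^{a+b})=\mathfrak m^{a+b-1}\subseteq \mathfrak m^{a+b-2}=\mathrm{adj}(\mathfrak m^a)\mathrm{adj}(\mathfrak m^b)$; the strictness here already foreshadows the mechanism of the inductive step.

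For the inductive step I would choose a general $x\in\mathfrak m\setminus\mathfrak m^2$, set $S=R[\frac{\mathfrak m}{x}]$, and let $C$ be the set of maximal ideals $\mathfrak n$ of $S$ containing $\mathfrak mS=xS$. For $\mathfrak n\in C$ the ring $S_{\mathfrak n}$ is again a two-dimensional regular local ring, and the transforms satisfy $\mathfrak aS_{\mathfrak n}=x^a\mathfrak a'$, $\mathfrak bS_{\mathfrak n}=x^b\mathfrak b'$ with $\mathfrak a',\mathfrak b'$ integrally closed of strictly smaller multiplicity (the ideal-theoretic analogue of Proposition \ref{buchsbaum rim}), so the induction hypothesis yields $\mathrm{adj}(\mathfrak a'\mathfrak b')\subseteq\mathrm{adj}(\mathfrak a')\mathrm{adj}(\mathfrak b')$. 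The key computational input, which I will justify separately, is the transformation law $\mathrm{adj}(\mathfrak a)S_{\mathfrak n}=x^{a-1}\mathrm{adj}(\mathfrak a')$ and likewise $\mathrm{adj}(\mathfrak a\mathfrak b)S_{\mathfrak n}=x^{a+b-1}\mathrm{adj}(\mathfrak a'\mathfrak b')$. Granting this, and writing $P=\mathrm{adj}(\mathfrak a)\mathrm{adj}(\mathfrak b)$, I get on each chart
$$\mathrm{adj}(\mathfrak a\mathfrak b)S_{\mathfrak n}=x^{a+b-1}\mathrm{adj}(\mathfrak a'\mathfrak b')\subseteq x^{a+b-1}\mathrm{adj}(\mathfrak a')\mathrm{adj}(\mathfrak b')=x\cdot PS_{\mathfrak n}\subseteq PS_{\mathfrak n},$$
the extra factor of $x$ being exactly the slack between discrepancy $2$ for $P$ and discrepancy $1$ for $\mathrm{adj}(\mathfrak a\mathfrak b)$. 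Since $\mathrm{adj}(\mathfrak a\mathfrak b)\subseteq R\subseteq S$ and $P$ is integrally closed (product of integrally closed ideals, Theorem \ref{thm 5.2 in K}), Proposition \ref{intersection} gives $PS=S\cap\bigcap_{\mathfrak n\in C}PS_{\mathfrak n}$, whence $\mathrm{adj}(\mathfrak a\mathfrak b)\subseteq PS$, and contracting back along $P=PS\cap R$ (contraction of an integrally closed ideal from a general chart) yields $\mathrm{adj}(\mathfrak a\mathfrak b)\subseteq P$, as desired.

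The hard part will be the transformation law $\mathrm{adj}(\mathfrak a)S_{\mathfrak n}=x^{a-1}\mathrm{adj}(\mathfrak a')$, i.e.\ tracking the \emph{single} power of $x$ correctly. This is where the discrepancy of the point blow-up enters: I would derive it from the valuative definition together with the chain rule for Jacobian ideals, $v(J_{R_v/R})=v(J_{(S_{\mathfrak n})_v/S_{\mathfrak n}})+v(J_{S_{\mathfrak n}/R})$ with $v(J_{S_{\mathfrak n}/R})=v(x)$, which is precisely the content built into Proposition \ref{basic properties}(4) and the localization formula of Proposition \ref{prop 1.3.1 in L}; one must also check that $\mathrm{adj}(\mathfrak a)S_{\mathfrak n}$ is genuinely the restriction of the adjoint, i.e.\ is controlled by the valuations centered in $\mathfrak n$. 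This bookkeeping of exceptional multiplicities is the crux, and everything else is assembly via Proposition \ref{intersection} and the multiplicity drop. As an independent check on the direction of the inequality, one can compare with the global multiplier-ideal proof (Demailly--Ein--Lazarsfeld), which obtains the same containment by restricting $\mathrm{adj}(\mathfrak a\boxtimes\mathfrak b)$ to the diagonal; that route, however, uses the restriction theorem, which lies outside the toolkit developed here, so I would present the chart induction as the primary argument.
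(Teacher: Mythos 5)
Your strategy (strip off principal parts, pass to the charts of the point blow-up, induct on multiplicity, contract back via Proposition \ref{intersection}) is sensible and runs parallel to the paper's proof of Theorem \ref{Main}; note that for this particular proposition the paper offers no proof at all, citing H\"ubl--Swanson instead, so the only question is whether your argument stands on its own. It does not, because of the step you yourself flag as the crux. The valuative definition plus the chain rule $v(J_{R_v/R})=v(J_{R_v/S_{\mathfrak n}})+v(x)$ proves only the containment $\mathrm{adj}(\mathfrak a)S_{\mathfrak n}\subseteq x^{a-1}\mathrm{adj}(\mathfrak a')$: an element of $\mathrm{adj}(\mathfrak a)$ satisfies the adjoint conditions at every divisorial valuation of $S_{\mathfrak n}$, so it lies in $x^{a-1}\mathrm{adj}(\mathfrak a')$. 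The reverse containment $x^{a-1}\mathrm{adj}(\mathfrak a')\subseteq \mathrm{adj}(\mathfrak a)S_{\mathfrak n}$ is of a different nature: an element of $\mathrm{adj}(\mathfrak a')$ is constrained only at valuations centered on $S_{\mathfrak n}$, and to place $x^{a-1}s$ in $\mathrm{adj}(\mathfrak a)S_{\mathfrak n}$ you must write it as an $S_{\mathfrak n}$-combination of elements of $R$ satisfying the adjoint conditions at \emph{all} divisorial valuations of $R$, including those centered at points of the exceptional divisor away from $\mathfrak n$. That is a global-generation statement (the adjoint sheaf on the blow-up is generated by the $R$-module $\mathrm{adj}(\mathfrak a)$), and neither of the tools you invoke supplies it: Proposition \ref{basic properties}(4) is the pushforward (intersection) formula, and extending an intersection to $S_{\mathfrak n}$ only gives containment \emph{into} the intersection of extensions, again the wrong direction; Proposition \ref{prop 1.3.1 in L} permits localization only at multiplicative sets of $R$, and $S_{\mathfrak n}$ is not a localization of $R$. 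A sanity check that no formal derivation can exist: for $\mathfrak a=R$ (order $0$) the easy direction still holds, but the claimed equality would read $S_{\mathfrak n}=x^{-1}S_{\mathfrak n}$, which is false --- indeed $H^0(X_1,\omega_{X_1})=\mathrm{adj}(R)=R$ while $\omega_{X_1}$ restricted to the chart is $\frac{1}{x}S$, so the adjoint sheaf genuinely fails to be globally generated there. Any correct proof of your transformation law must therefore use $\mathfrak m$-primarity in an essential, geometric way.

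This missing containment is exactly the one your induction consumes: the easy direction suffices for $\mathrm{adj}(\mathfrak a\mathfrak b)$, but to convert $x^{a+b-1}\mathrm{adj}(\mathfrak a')\mathrm{adj}(\mathfrak b')$ into $x\cdot\mathrm{adj}(\mathfrak a)\mathrm{adj}(\mathfrak b)S_{\mathfrak n}$ you need precisely $x^{a-1}\mathrm{adj}(\mathfrak a')\subseteq\mathrm{adj}(\mathfrak a)S_{\mathfrak n}$ and its analogue for $\mathfrak b$, so the inductive step does not close as written. The law is in fact true for $\mathfrak m$-primary integrally closed ideals --- it encodes the classical statement that the point basis of $\mathrm{adj}(\mathfrak a)$ is obtained from that of $\mathfrak a$ by subtracting $1$ at each infinitely near point (the theory of adjoint curves; one route is Zariski's theory of complete ideals together with the observation that these shifted multiplicities never violate the proximity inequalities, another is Lipman's vanishing theorem) --- but this is a substantive theorem, of essentially the same depth as the subadditivity you are trying to prove, so it cannot be dismissed as bookkeeping; supplying it is where the real work lies, and it is in effect what H\"ubl and Swanson's cited argument provides. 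Two minor points: your base case is misstated (the induction on $e(\mathfrak a)+e(\mathfrak b)$ terminates when one ideal becomes principal or the unit ideal, where the claim follows from Proposition \ref{basic properties}; powers of $\mathfrak m$ have large multiplicity and are not terminal, they merely have trivial transforms), and in the final contraction you should note that the single element $x$ must be chosen general enough to serve simultaneously for the contraction $P=PS\cap R$ and for the transform statements, which is harmless since only finitely many genericity conditions are in play.
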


Lipman showed that $\mathrm{adj}(\mathfrak  a^{m+1})=\mathfrak a\mathrm{adj}(\mathfrak a^m)$ for a natural number $m$ and an ideal $\mathfrak a$ of a $2$-dimensional regular local ring (2.3 in  \cite{L}).
Our proof is just an imitation of the proof.
\begin{lem}\label{skoda}
Let $(R,\mathfrak m)$ be a $2$-dimensional  regular local ring with infinite residue field.
Let  $\mathfrak a$ and $\mathfrak b$ be  ideals of $R$.
Then for natural number $m$, $$\mathrm{adj}(\mathfrak a\mathfrak  b^{m+1})=\mathfrak b\mathrm{adj}(\mathfrak a\mathfrak b^{m}).$$
\end{lem}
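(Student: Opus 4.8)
The plan is to prove the two inclusions separately. The inclusion $\mathfrak b\,\mathrm{adj}(\mathfrak a\mathfrak b^{m})\subset\mathrm{adj}(\mathfrak a\mathfrak b^{m+1})$ is formal, while the reverse inclusion is the real content and is where a Skoda-type cohomological argument enters, exactly as in Lipman's treatment of the case $\mathfrak a=\mathfrak b$. For the easy inclusion I would argue straight from the valuation definition of the adjoint: if $b\in\mathfrak b$ and $r\in\mathrm{adj}(\mathfrak a\mathfrak b^{m})$, then for every divisorial valuation $v$ one has $v(br)=v(b)+v(r)\ge v(\mathfrak b)+v(\mathfrak a\mathfrak b^{m})-v(J_{R_v/R})=v(\mathfrak a\mathfrak b^{m+1})-v(J_{R_v/R})$, so $br\in\mathrm{adj}(\mathfrak a\mathfrak b^{m+1})$; as $\mathrm{adj}(\mathfrak a\mathfrak b^{m+1})$ is an ideal, the inclusion follows.

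For the reverse inclusion I would pass to geometry. Choose a proper birational morphism $f\colon X\to\mathrm{Spec}\,R$ with $X$ regular that simultaneously principalizes $\mathfrak a$ and $\mathfrak b$, say $\mathfrak a\mathcal O_X=\mathcal O_X(-A)$ and $\mathfrak b\mathcal O_X=\mathcal O_X(-B)$ with $A,B$ effective. Writing $\mathcal L_k=\omega_X\otimes\mathcal O_X(-A-kB)$, Proposition \ref{prop 1.3.1 in L} gives $\mathrm{adj}(\mathfrak a\mathfrak b^{k})=H^0(X,\mathcal L_k)$ for every $k\ge0$. Since the residue field is infinite, $\mathfrak b$ has a reduction generated by at most two elements $g_1,g_2$ (Theorem \ref{Rees theorem2} applied to $\mathfrak b$ as a rank-one module), and since a reduction becomes an equality after pullback to a model on which $\mathfrak b$ is principalized, one still has $(g_1,g_2)\mathcal O_X=\mathfrak b\mathcal O_X=\mathcal O_X(-B)$. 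It therefore suffices to show that the multiplication map $H^0(X,\mathcal L_m)^{\oplus 2}\xrightarrow{(g_1,g_2)}H^0(X,\mathcal L_{m+1})$ is surjective, for this forces $\mathrm{adj}(\mathfrak a\mathfrak b^{m+1})\subset(g_1,g_2)\,\mathrm{adj}(\mathfrak a\mathfrak b^{m})\subset\mathfrak b\,\mathrm{adj}(\mathfrak a\mathfrak b^{m})$ (the case where $\mathfrak b$ is principal being trivial, the map then being an isomorphism).

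To obtain this surjectivity I would twist the Koszul resolution of the invertible sheaf $\mathcal O_X(-B)$ attached to the surjection $\mathcal O_X^{\oplus 2}\xrightarrow{(g_1,g_2)}\mathcal O_X(-B)$ by $\mathcal L_m$, producing the short exact sequence $0\to\mathcal L_{m-1}\to\mathcal L_m^{\oplus 2}\to\mathcal L_{m+1}\to0$. Taking cohomology, equivalently higher direct images over the affine base $\mathrm{Spec}\,R$, the cokernel of the map in question injects into $H^1(X,\mathcal L_{m-1})$, so everything reduces to the single vanishing $H^1(X,\mathcal L_{m-1})=R^1f_*\big(\omega_X\otimes\mathcal O_X(-A-(m-1)B)\big)=0$ for $m\ge1$.

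This vanishing is the crux of the argument, and it is the step I expect to be the main obstacle; it is exactly where dimension two and the regularity of $R$ are used. Since $\mathfrak a\mathcal O_X$ and $\mathfrak b\mathcal O_X$ are globally generated, the line bundle $\mathcal O_X(-A-(m-1)B)$ is $f$-nef for $m\ge1$, so relative Grauert--Riemenschneider vanishing in this two-dimensional, rational-singularities setting (the same framework underlying Proposition \ref{prop 1.3.1 in L}) yields the required $R^1f_*=0$. Once this is in hand the Koszul bookkeeping is routine, and combining the two inclusions gives $\mathrm{adj}(\mathfrak a\mathfrak b^{m+1})=\mathfrak b\,\mathrm{adj}(\mathfrak a\mathfrak b^{m})$.
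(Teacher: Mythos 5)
Your proposal is correct and follows essentially the same route as the paper: both pass to a model on which $\mathfrak a$ and $\mathfrak b$ are principalized, replace $\mathfrak b$ by a two-generated reduction whose pullback still equals $\mathfrak b\mathcal O_X$, twist the resulting Koszul short exact sequence by $\mathfrak a\mathfrak b^{m+1}\omega$, and reduce everything to the vanishing $H^1(X,\mathfrak a\mathfrak b^{m-1}\omega_X)=0$ (your ``relative Grauert--Riemenschneider for $f$-nef twists'' is exactly the vanishing the paper cites from Lipman, 2.2 and Remark 2.2.1(b) in \cite{L}), concluding via Proposition \ref{prop 1.3.1 in L}. The only cosmetic differences are that you verify the easy inclusion $\mathfrak b\,\mathrm{adj}(\mathfrak a\mathfrak b^m)\subset\mathrm{adj}(\mathfrak a\mathfrak b^{m+1})$ directly from the valuation definition where the paper cites Proposition 18.2.1 of \cite{HS book}, and you avoid the paper's case split into principal and $\mathfrak m$-primary $\mathfrak b$.
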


\begin{proof}
If $\mathfrak b$ is a principal ideal, then this lemma holds by Proposition \ref{basic properties}.

Suppose that $\mathfrak b$ is an $\mathfrak m$-primary ideal.
Let $f:Y\to \mathrm{Spec}(R)$ be a morphism which factors as a sequence of blowups with nonsingular centers such that $\mathfrak a\mathcal O_Y$ and $\mathfrak b\mathcal O_Y$ are invertible.
Let $\mathfrak b_0=(a,b)$ be a reduction of $\mathfrak b$, so that $\mathfrak b_0\mathcal O_Y=\mathfrak b\mathcal O_Y$.
Let $F$ be the direct sum of $2$ copies of $(\mathfrak b\mathcal O_Y)^{-1}$.
Then we have the exact sequence
$$K(F,\sigma):0\to \Lambda^2 F\to F\to \mathcal O_Y \to 0,$$
where the map $\sigma: F\to \mathcal O_Y$ is defined by $(x,y)\mapsto ax+by$
(see page 111 in \cite{LT}).
Therefore $K(F,\sigma)\otimes \mathfrak a\mathfrak b^{m+1}\omega_Y$ is exact.
Note that  $H^1(Y,\mathfrak a\mathfrak b^{m-1}\omega_Y)=0$ (See 2.2 and Remark 2.2.1(b) in \cite{L}).
This implies that $$H^0(Y,\mathfrak a\mathfrak b^{m+1}\omega_Y)=\mathfrak b_0H^0(Y,\mathfrak a\mathfrak b^{m}\omega_Y).$$
By Proposition \ref{prop 1.3.1 in L}, $\mathrm{adj}(\mathfrak a\mathfrak  b^{m+1})=\mathfrak b_0\mathrm{adj}(\mathfrak a\mathfrak b^m).$
Since $$\mathfrak b_0\mathrm{adj}(\mathfrak a\mathfrak b^m)\subset \mathfrak b\mathrm{adj}(\mathfrak a\mathfrak b^m)\subset \mathrm{adj}(\mathfrak a\mathfrak b^{m+1})$$
by Proposition 18.2.1 in \cite{HS book},
we have $$\mathrm{adj}(\mathfrak a\mathfrak  b^{m+1})=\mathfrak b\mathrm{adj}(\mathfrak a\mathfrak b^{m}).$$
\end{proof}

\begin{lem}\label{lem I(M)}
Let $(R,\mathfrak m)$ be a $2$-dimensional  regular local ring with infinite residue field, $\mathfrak a$ be an  ideal of $R$ and $M$ be a  finitely generated  torsion-free $R$-module with rank $r$.
Then $I(\mathfrak aM)=I(\mathfrak a)^{r}I(M)$.
\end{lem}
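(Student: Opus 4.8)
The plan is to reduce to the case where $\mathfrak a$ is $\mathfrak m$-primary and then compute the ideal of maximal minors directly. We may assume $\mathfrak a\neq 0$. Since $R$ is a two-dimensional regular local ring, it is a UFD, so I would write $\mathfrak a=g\mathfrak b$, where $g$ is a greatest common divisor of a generating set of $\mathfrak a$ and $\mathfrak b$ is an $\mathfrak m$-primary ideal. First I would record the elementary fact that $I(gN)=I(N)$ for every nonzero $g\in R$ and every finitely generated torsion-free $N$: multiplication by $g$ identifies $(gN)^{**}$ with $gN^{**}$, and if $B$ represents $N$ with respect to a basis $e_1,\dots,e_s$ of $N^{**}$, then the same matrix $B$ represents $gN$ with respect to the basis $ge_1,\dots,ge_s$ of $(gN)^{**}$. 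Applying this with $N=\mathfrak b M$ gives $I(\mathfrak a M)=I(\mathfrak b M)$, while applying it with $N=\mathfrak b$ together with the equality $I(\mathfrak b)=\mathfrak b$ for $\mathfrak m$-primary ideals gives $I(\mathfrak a)=\mathfrak b$. Hence the lemma reduces to the claim $I(\mathfrak b M)=\mathfrak b^{\,r}I(M)$ for $\mathfrak m$-primary $\mathfrak b$; after renaming, it suffices to prove $I(\mathfrak a M)=\mathfrak a^{\,r}I(M)$ when $\mathfrak a$ is $\mathfrak m$-primary.

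Assume now that $\mathfrak a$ is $\mathfrak m$-primary. Since $M^{**}/M$ has finite length and $M/\mathfrak a M$ is a finitely generated module over the Artinian ring $R/\mathfrak a$, the quotient $M^{**}/\mathfrak a M$ has finite length, and consequently $(\mathfrak a M)^{**}=M^{**}$. Because the ideal of maximal minors of a representing matrix is independent of the chosen generators (it is the zeroth Fitting ideal of $M^{**}/\mathfrak a M$), I am free to compute $I(\mathfrak a M)$ using any generating set of $\mathfrak a M$ inside the common free module $M^{**}$.

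The main step is then a direct minor computation. I would fix a basis $e_1,\dots,e_r$ of $M^{**}$, let $B=(b_{ij})$ be an $r\times n$ representing matrix for $M$ so that $m_j=\sum_i b_{ij}e_i$ and $I(M)=I_r(B)$, and write $\mathfrak a=(a_1,\dots,a_s)$. The products $a_k m_j$ generate $\mathfrak a M$, and in the basis $e_1,\dots,e_r$ they form the columns of the $r\times sn$ block matrix $\bigl[\,a_1B\mid a_2B\mid\cdots\mid a_sB\,\bigr]$. Choosing $r$ columns indexed by pairs $(k_1,j_1),\dots,(k_r,j_r)$, the corresponding $r\times r$ minor equals $a_{k_1}\cdots a_{k_r}\det\bigl[B_{j_1}\mid\cdots\mid B_{j_r}\bigr]$, which vanishes unless the $j_l$ are distinct and is otherwise a product of a degree-$r$ monomial in the $a_k$ with a maximal minor of $B$; conversely every such product is realized by a suitable choice of columns. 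Since the degree-$r$ monomials in $a_1,\dots,a_s$ generate $\mathfrak a^{\,r}$ and the maximal minors of $B$ generate $I_r(B)=I(M)$, this gives $I(\mathfrak a M)=\mathfrak a^{\,r}I(M)$, as desired.

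I expect the only genuinely delicate point to be the bookkeeping around double duals rather than the minors: the minor computation is elementary multilinear algebra, but it computes the zeroth Fitting ideal of $M^{**}/\mathfrak a M$, and this coincides with $I(\mathfrak a M)$ only because $(\mathfrak a M)^{**}=M^{**}$. For a non-$\mathfrak m$-primary $\mathfrak a$ this fails—one has $(\mathfrak a M)^{**}=gM^{**}$, strictly contained in $M^{**}$—which is precisely why the reduction to the $\mathfrak m$-primary case in the first step is essential.
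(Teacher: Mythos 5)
Your proposal is correct, and its central computation is the same as the paper's: in the $\mathfrak m$-primary case both arguments use $(\mathfrak a M)^{**}=M^{**}$ (finite length of $M^{**}/\mathfrak a M$ plus uniqueness of the enveloping free module) and then compute the ideal of maximal minors of the block matrix $\bigl(a_1\widetilde{M}\ a_2\widetilde{M}\ \cdots\ a_s\widetilde{M}\bigr)$, obtaining $\mathfrak a^{r}I(M)$; your explicit minor bookkeeping just spells out what the paper asserts in one line. The real difference is in the reduction: the paper's proof treats only two cases, $\mathfrak a$ principal and $\mathfrak a$ $\mathfrak m$-primary, and never says how an arbitrary ideal is handled, whereas you make this explicit by factoring $\mathfrak a=g\mathfrak b$ with $g$ a gcd and proving $I(gN)=I(N)$ via the identification $(gN)^{**}=gN^{**}$. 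That is exactly the missing glue, so your write-up is, if anything, more complete than the paper's; likewise your remark that $I(\mathfrak a M)$ may be computed from a non-minimal generating set (as the zeroth Fitting ideal of $M^{**}/\mathfrak a M$) is correct and is used silently in the paper, whose generating set $x_i m_j$ is also not minimal. The only slight imprecision is that when $\mathfrak a$ is principal your cofactor $\mathfrak b$ equals $R$ rather than an $\mathfrak m$-primary ideal, but that case is trivial, since then $I(\mathfrak a)=R$ and $I(\mathfrak a M)=I(gM)=I(M)$.
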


\begin{proof}

If $\mathfrak a$ is a principal ideal, then  $I(\mathfrak a)=R$ and $I(\mathfrak aM)=I(M)$.
Therefore $I(\mathfrak aM)=I(\mathfrak a)^{r}I(M)$ if $\mathfrak a$ is a principal ideal.
Suppose that  $\mathfrak a$ is an $\mathfrak m$-primary ideal. 
Then $I(\mathfrak a)=\mathfrak a$ and $M^{**}$ is a free module such that  $M^{**}/\mathfrak aM$  is of finite length.
Therefore $(\mathfrak aM)^{**}=M^{**}$ by Proposition 2.1 in \cite{K}.
Let $x_1,\dots, x_m$ be generators of $\mathfrak a$ and $\widetilde{M}$ be a representing matrix for $M$.
Then the  $r\times m\nu_R(M)$ matrix $\bigl( x_1\widetilde{M}\ x_2\widetilde{M}\ \dots \ x_m\widetilde{M}\bigr)  $  is a representing matrix for $\mathfrak aM$, where $x_i\widetilde{M}$ is the matrix obtained by multiplying each entry of $\widetilde{M}$ by $x_i$.
Hence $I(\mathfrak aM)=\mathfrak a^{r}I(M)$.
Therefore $I(\mathfrak aM)=I(\mathfrak a)^{r}I(M)$.
\end{proof}

The following proposition is a generalization of Corollary 4.2.5 in \cite{S}.
\begin{prop}
Let $(R,\mathfrak m)$ be a $2$-dimensional  regular local ring with infinite residue field, $\mathfrak a$ be an integrally closed ideal of $R$ and $M$ be a finitely generated integrally closed torsion-free $R$-module with rank $r$.
Then
$$\mathrm{core}(\mathfrak aM)\subset I(\mathfrak a)^{r-1}\mathrm{core}(\mathfrak a)\mathrm{core}(M).$$
\end{prop}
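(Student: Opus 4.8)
The plan is to combine the core formula of Theorem~\ref{core adj} with the Skoda-type Lemma~\ref{skoda} and the subadditivity inequality of Proposition~\ref{subadditivity}. First I would note that, since $\mathfrak a$ and $M$ are both integrally closed, Theorem~\ref{thm 5.2 in K} shows that $\mathfrak aM$ is integrally closed. Hence Theorem~\ref{core adj} applies to $\mathfrak aM$, and, using $I(\mathfrak aM)=I(\mathfrak a)^{r}I(M)$ from Lemma~\ref{lem I(M)}, I obtain
\begin{align*}
\mathrm{core}(\mathfrak aM)=\mathrm{adj}(I(\mathfrak aM))\,\mathfrak aM=\mathrm{adj}\bigl(I(\mathfrak a)^{r}I(M)\bigr)\,\mathfrak aM.
\end{align*}
This reduces the problem to bounding the ideal $\mathrm{adj}(I(\mathfrak a)^{r}I(M))$ from above.

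The key step is to strip $r-1$ copies of $I(\mathfrak a)$ off this adjoint. This is where the hypothesis enters through $I(\mathfrak a)$: for an ideal $\mathfrak a$ the ideal $I(\mathfrak a)$ equals $R$ when $\mathfrak a$ is principal and is $\mathfrak m$-primary otherwise, so $I(\mathfrak a)$ always lies in the range of ideals for which Lemma~\ref{skoda} is available. Applying that lemma with $\mathfrak b=I(\mathfrak a)$ exactly $r-1$ times, peeling the exponent down from $r$ to $1$, gives
\begin{align*}
\mathrm{adj}\bigl(I(M)I(\mathfrak a)^{r}\bigr)=I(\mathfrak a)^{r-1}\,\mathrm{adj}\bigl(I(M)I(\mathfrak a)\bigr).
\end{align*}
The remaining adjoint of a product is then controlled by Proposition~\ref{subadditivity}, namely $\mathrm{adj}(I(M)I(\mathfrak a))\subset \mathrm{adj}(I(M))\,\mathrm{adj}(I(\mathfrak a))$.

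Finally I would substitute these two facts back and regroup:
\begin{align*}
\mathrm{core}(\mathfrak aM)&\subset I(\mathfrak a)^{r-1}\,\mathrm{adj}(I(M))\,\mathrm{adj}(I(\mathfrak a))\,\mathfrak aM\\
&=I(\mathfrak a)^{r-1}\bigl(\mathrm{adj}(I(\mathfrak a))\,\mathfrak a\bigr)\bigl(\mathrm{adj}(I(M))\,M\bigr).
\end{align*}
Invoking Theorem~\ref{core adj} once more, applied to the rank-one integrally closed module $\mathfrak a$ and to $M$, identifies $\mathrm{adj}(I(\mathfrak a))\,\mathfrak a=\mathrm{core}(\mathfrak a)$ and $\mathrm{adj}(I(M))\,M=\mathrm{core}(M)$, which yields the asserted containment $\mathrm{core}(\mathfrak aM)\subset I(\mathfrak a)^{r-1}\,\mathrm{core}(\mathfrak a)\,\mathrm{core}(M)$. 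I expect the main obstacle to be the peeling step: one must verify that $I(\mathfrak a)$ really falls within the hypotheses of Lemma~\ref{skoda} and count its applications precisely, so that exactly $I(\mathfrak a)^{r-1}$ is extracted (stopping at exponent $1$, not $0$) and the residual factor is $\mathrm{adj}(I(M)I(\mathfrak a))$, which is the one place where subadditivity is genuinely needed. The degenerate case $r=1$ (no peeling, $I(\mathfrak a)^{0}=R$) is then handled uniformly by the same computation.
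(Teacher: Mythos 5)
Your proposal is correct and follows essentially the same route as the paper's own proof: both reduce via Theorem~\ref{thm 5.2 in K} and Theorem~\ref{core adj} to computing $\mathrm{adj}\bigl(I(\mathfrak a)^{r}I(M)\bigr)\mathfrak aM$, peel off $I(\mathfrak a)^{r-1}$ with Lemma~\ref{skoda}, apply the subadditivity of Proposition~\ref{subadditivity}, and then reassemble the factors as $\mathrm{core}(\mathfrak a)$ and $\mathrm{core}(M)$ using Theorem~\ref{core adj} again (together with Lemma~\ref{lem I(M)}). Your extra care about why $I(\mathfrak a)$ fits the hypotheses of Lemma~\ref{skoda} and about the exact count of peeling steps is sound but not a departure from the paper's argument.
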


\begin{proof}
By  Theorem \ref{core adj},  we have $\mathfrak a\mathrm{adj}(I(\mathfrak a))=\mathrm{core}(\mathfrak a).$
Note that $\mathfrak aM$ is integrally closed by Theorem \ref{thm 5.2 in K}.
By  Theorem \ref{core adj}, Proposition \ref{subadditivity}, Lemma \ref{skoda} and Lemma \ref{lem I(M)},
\begin{align*}
\mathrm{core}(\mathfrak aM)&=\mathrm{adj}(I(\mathfrak aM))\mathfrak aM\\
&=\mathrm{adj}(I(\mathfrak a)^{r}I(M))\mathfrak aM\\
&=I(\mathfrak a)^{r-1}\mathrm{adj}(I(\mathfrak a)I(M))\mathfrak aM\\
&\subset I(\mathfrak a)^{r-1}\mathrm{adj}(I(\mathfrak a))\mathrm{adj}(I(M))\mathfrak aM\\
&=I(\mathfrak a)^{r-1}\mathrm{core}(\mathfrak a)\mathrm{core}(M).
\end{align*}

\end{proof}

In \cite{HS}, Huneke and Swanson proved the following lemma in order to understand $\mathrm{core}\bigl(\mathrm{core}(\mathfrak a)\bigr)$.
\begin{lem} {\rm(Lemma 4.5 in \cite{HS})}\label{adj(core)=adj^2}
Let $(R,\mathfrak m)$ be a $2$-dimensional regular local ring with infinite residue field and $\mathfrak a$ be an $\mathfrak m$-primary integrally closed ideal of $R$.
Then 
$$\mathrm{adj}(\mathrm{core}(\mathfrak a))=\Bigl(\mathrm{adj}(\mathfrak a)\Bigr)^{2}.$$
\end{lem}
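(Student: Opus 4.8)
The plan is to reduce the statement to a single multiplicativity computation for adjoints and then run the Skoda-type argument of Lemma \ref{skoda} in its boundary case. First I would invoke Theorem \ref{core adj in HS} to replace $\mathrm{core}(\mathfrak a)$ by the product $\mathfrak a\,\mathrm{adj}(\mathfrak a)$, so that, writing $\mathfrak b=\mathrm{adj}(\mathfrak a)$, the assertion becomes $\mathrm{adj}(\mathfrak a\mathfrak b)=\mathfrak b^{2}$. The feature I would exploit throughout is the containment $\mathfrak a\subset\overline{\mathfrak a}\subset\mathrm{adj}(\mathfrak a)=\mathfrak b$ from Proposition \ref{basic properties}(1); it is exactly this that distinguishes $\mathfrak b=\mathrm{adj}(\mathfrak a)$ from an arbitrary ideal and makes the sharp equality, rather than a mere inclusion, plausible.

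For the inclusion $\mathfrak b^{2}\subset\mathrm{adj}(\mathfrak a\mathfrak b)$ I would use the general fact $\mathfrak b\,\mathrm{adj}(\mathfrak a)\subset\mathrm{adj}(\mathfrak b\mathfrak a)$ (Proposition 18.2.1 in \cite{HS book}, the same inclusion invoked in the proof of Lemma \ref{skoda}); substituting $\mathrm{adj}(\mathfrak a)=\mathfrak b$ gives $\mathfrak b^{2}=\mathfrak b\,\mathrm{adj}(\mathfrak a)\subset\mathrm{adj}(\mathfrak a\mathfrak b)$ at once. This direction is formal.

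The substance is the reverse inclusion $\mathrm{adj}(\mathfrak a\mathfrak b)\subset\mathfrak b^{2}$, and here I would imitate the proof of Lemma \ref{skoda} in the excluded case ``$m=0$''. I would choose $f\colon Y\to\mathrm{Spec}(R)$ factoring as a sequence of blowups with nonsingular centers so that $\mathfrak a\mathcal O_Y$ and $\mathfrak b\mathcal O_Y$ are invertible, take a two-generated reduction $\mathfrak b_{0}=(a,b)$ of $\mathfrak b$ with $\mathfrak b_{0}\mathcal O_Y=\mathfrak b\mathcal O_Y$, and tensor the Koszul sequence $0\to\Lambda^{2}F\to F\to\mathcal O_Y\to 0$ (with $F=(\mathfrak b\mathcal O_Y)^{-1}\oplus(\mathfrak b\mathcal O_Y)^{-1}$) by $\mathfrak a\mathfrak b\,\omega_Y$; the left-hand term then becomes $\mathfrak a\mathfrak b^{-1}\omega_Y$. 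Because $\mathfrak a\subset\mathfrak b$, the sheaf $\mathfrak a\mathfrak b^{-1}\mathcal O_Y$ is an honest, effective ideal sheaf on $Y$, so I would invoke Lipman's vanishing ($2.2$ and Remark $2.2.1$(b) in \cite{L}) to get $H^{1}(Y,\mathfrak a\mathfrak b^{-1}\omega_Y)=0$. Taking cohomology then yields $H^{0}(Y,\mathfrak a\mathfrak b\,\omega_Y)=\mathfrak b_{0}H^{0}(Y,\mathfrak a\,\omega_Y)$, that is $\mathrm{adj}(\mathfrak a\mathfrak b)=\mathfrak b_{0}\,\mathrm{adj}(\mathfrak a)\subset\mathfrak b\,\mathrm{adj}(\mathfrak a)=\mathfrak b^{2}$ by Proposition \ref{prop 1.3.1 in L}. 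Combined with the previous paragraph this gives $\mathrm{adj}(\mathrm{core}(\mathfrak a))=\mathfrak b^{2}=(\mathrm{adj}(\mathfrak a))^{2}$.

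The main obstacle is exactly the vanishing $H^{1}(Y,\mathfrak a\mathfrak b^{-1}\omega_Y)=0$. For $m\ge 1$ the corresponding sheaf $\mathfrak a\mathfrak b^{m-1}\mathcal O_Y$ is a product of ideal sheaves and the vanishing is standard; at $m=0$ the negative twist by $\mathfrak b^{-1}$ leaves that regime, which is precisely why Lemma \ref{skoda} is proved only for positive $m$. What I expect to rescue the argument is the special containment $\mathfrak a\subset\mathrm{adj}(\mathfrak a)$, which makes $\mathfrak a\mathfrak b^{-1}\mathcal O_Y$ integral; the delicate point, and the one I would have to check with care, is that this effective invertible ideal sheaf really lies in the range to which Lipman's vanishing theorem applies. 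If one prefers to bypass the boundary case altogether, the same conclusion follows by applying Lemma \ref{skoda} with $\mathfrak b=\mathrm{adj}(\mathfrak a)$ and $m=0$ directly, reading its statement for all $m\ge 0$.
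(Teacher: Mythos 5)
The paper offers no proof of this lemma to compare against (it is quoted from [HS, Lemma 4.5]; indeed the paper later relies on it, in the proof of Lemma \ref{adj(core)=adj^r+1}, precisely to handle the boundary case that Lemma \ref{skoda} cannot reach), so your proposal has to stand on its own. Its first two steps do: the reduction via Theorem \ref{core adj in HS} to the claim $\mathrm{adj}(\mathfrak a\mathfrak b)=\mathfrak b^2$ with $\mathfrak b=\mathrm{adj}(\mathfrak a)$, and the inclusion $\mathfrak b^2=\mathfrak b\,\mathrm{adj}(\mathfrak a)\subset\mathrm{adj}(\mathfrak a\mathfrak b)$, are both correct. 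The gap is the vanishing $H^1(Y,\mathfrak a\mathfrak b^{-1}\omega_Y)=0$, and it is a genuine one, not a point that survives a careful check. Lipman's vanishing ([L, 2.2, 2.2.1(b)]) applies to sheaves $J\mathcal O_Y\omega_Y$ with $J$ an ideal of $R$: such $J\mathcal O_Y$ are generated by global sections over $\mathrm{Spec}(R)$, hence relatively nef, and nefness is what every vanishing theorem available here requires. Being an effective invertible ideal sheaf on $Y$ is not enough (for a $(-2)$-curve $E$ one has $H^1(Y,\mathcal O_Y(E))\cong H^1(\mathbb P^1,\mathcal O(-2))\neq 0$, yet $\mathcal O_Y(-E)$ is such a sheaf), and your sheaf really does leave the nef range: take $\mathfrak a=(xy,x^3,y^3)$, so $\mathfrak b=\mathrm{adj}(\mathfrak a)=\mathfrak m$, and let $Y$ be the resolution obtained by blowing up the origin and then the two points of $E_1$ in the coordinate directions. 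Then $\mathfrak a\mathcal O_Y=\mathcal O_Y(-2E_1-3E_2-3E_3)$ and $\mathfrak m\mathcal O_Y=\mathcal O_Y(-E_1-E_2-E_3)$, so $\mathfrak a\mathfrak b^{-1}\mathcal O_Y=\mathcal O_Y(-K_{Y/R})$ with $K_{Y/R}=E_1+2E_2+2E_3$; since $E_1^2=-3$ we get $(-K_{Y/R})\cdot E_1=-1<0$, so this sheaf is not relatively nef, hence not of the form $J\mathcal O_Y$ for any ideal $J$ of $R$, and none of the cited theorems apply to it. (Here $H^1$ does vanish, but only because $\mathfrak a\mathfrak b^{-1}\omega_Y=\mathcal O_Y$ and $R$ has rational singularities --- an ad hoc argument, not a citation.)

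The deeper problem is that the vanishing you want to cite is essentially equivalent to the lemma itself. The Koszul long exact sequence together with $H^1(Y,\mathfrak a\omega_Y)=0$ identifies $H^1(Y,\mathfrak a\mathfrak b^{-1}\omega_Y)\cong\mathrm{adj}(\mathfrak a\mathfrak b)/\mathfrak b_0\,\mathrm{adj}(\mathfrak a)$, and $\mathfrak b_0\,\mathrm{adj}(\mathfrak a)=\mathfrak b_0\mathfrak b=\mathfrak b^2$ because integrally closed $\mathfrak m$-primary ideals in a two-dimensional regular local ring have reduction number one; so asserting the vanishing is asserting the conclusion. For the same reason your fallback is invalid: Lemma \ref{skoda} cannot be ``read for all $m\ge 0$,'' since at $m=0$ its statement is false for general ideals ($\mathfrak a=R$, $\mathfrak b=\mathfrak m^2$ gives $\mathrm{adj}(\mathfrak a\mathfrak b)=\mathfrak m\neq\mathfrak m^2=\mathfrak b\,\mathrm{adj}(\mathfrak a)$), and its proof needs $\mathfrak a\mathfrak b^{m-1}$ to be an actual ideal of $R$. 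A correct proof must input something beyond the formal Skoda-type mechanism --- for instance the Fitting-ideal description of iterated adjoints (Proposition \ref{adj^m in HS}) applied to $\mathrm{core}(\mathfrak a)=\mathrm{adj}(\mathfrak a^2)$, or a direct analysis on $Y$ of the antinef closure of $A-K_{Y/R}$ --- rather than the bare effectivity of $\mathfrak a\mathfrak b^{-1}\mathcal O_Y$.
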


Now we want to understand  $\mathrm{core}\bigl(\mathrm{core}(M)\bigr)$.
We need the following lemma.
\begin{lem}\label{adj(core)=adj^r+1}
Let $(R,\mathfrak m)$ be a $2$-dimensional regular local ring with infinite residue field and $M$ be a finitely generated integrally closed torsion-free $R$-module of  rank $r$.
Then 
$$\mathrm{adj}(I(\mathrm{core}(M)))=\Bigl(\mathrm{adj}(I(M))\Bigr)^{r+1}.$$
\end{lem}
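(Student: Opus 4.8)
The plan is to reduce the whole statement to ideal computations via the formula $\mathrm{core}(M)=\mathrm{adj}(I(M))M$ of Theorem \ref{core adj}, and then to evaluate the resulting adjoint by a Skoda-type iteration. First I would dispose of the free case, where $I(M)=R$ forces $\mathrm{core}(M)=M$ and both sides equal $R$, and assume $M$ non-free. Then $I(M)$ is an $\mathfrak m$-primary integrally closed ideal (Proposition \ref{properties of module in K}), and I set $\mathfrak b=\mathrm{adj}(I(M))$, which is integrally closed by Proposition \ref{basic properties} and contains the $\mathfrak m$-primary ideal $I(M)$; hence $\mathfrak b$ is either $\mathfrak m$-primary or all of $R$, and in either case $I(\mathfrak b)=\mathfrak b$ (the Fitting ideal of an $\mathfrak m$-primary ideal regarded as a module is the ideal itself). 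Writing $\mathrm{core}(M)=\mathfrak b M$ and applying Lemma \ref{lem I(M)}, I would obtain
$$I(\mathrm{core}(M))=I(\mathfrak b M)=I(\mathfrak b)^{r}I(M)=\mathrm{adj}(I(M))^{r}\,I(M),$$
reducing the task to computing $\mathrm{adj}\big(I(M)\,\mathrm{adj}(I(M))^{r}\big)$.

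Next I would run the iteration supplied by Lemma \ref{skoda}, taking its two ideals to be $I(M)$ and $\mathrm{adj}(I(M))$. Repeated application pulls one factor of $\mathrm{adj}(I(M))$ out of the adjoint at each step, giving
$$\mathrm{adj}\big(I(M)\,\mathrm{adj}(I(M))^{r}\big)=\mathrm{adj}(I(M))^{\,r-1}\,\mathrm{adj}\big(I(M)\,\mathrm{adj}(I(M))\big).$$
To close the telescope I would invoke the already-established ideal case: since $I(M)$ is $\mathfrak m$-primary and integrally closed, Theorem \ref{core adj in HS} gives $\mathrm{core}(I(M))=\mathrm{adj}(I(M))I(M)$, and Lemma \ref{adj(core)=adj^2} then yields $\mathrm{adj}\big(I(M)\,\mathrm{adj}(I(M))\big)=\mathrm{adj}(\mathrm{core}(I(M)))=\mathrm{adj}(I(M))^{2}$. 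Combining the two displays gives $\mathrm{adj}(I(\mathrm{core}(M)))=\mathrm{adj}(I(M))^{\,r-1}\cdot\mathrm{adj}(I(M))^{2}=\mathrm{adj}(I(M))^{\,r+1}$, as desired.

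The free case and the bookkeeping of the iteration are routine; the two points needing genuine care are where the main obstacle lies. First, I must justify $I(\mathfrak b)=\mathfrak b$, i.e. that $\mathfrak b=\mathrm{adj}(I(M))$ is $\mathfrak m$-primary (or $R$) rather than a proper principal ideal — this is exactly where the integrally closed, non-free hypothesis enters, through the $\mathfrak m$-primariness of $I(M)$. Second, I must bottom out the Skoda iteration correctly: Lemma \ref{skoda} should be applied only down to the factor $\mathrm{adj}\big(I(M)\,\mathrm{adj}(I(M))\big)$, and that final step should be handled by the ideal-theoretic Lemma \ref{adj(core)=adj^2} rather than by pushing the iteration one step too far. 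The crux is matching exponents so that the $r-1$ factors from the iteration combine with the $2$ from the base case to give exactly $r+1$; this also correctly specializes to the ideal statement $\mathrm{adj}(\mathrm{core}(\mathfrak a))=\mathrm{adj}(\mathfrak a)^{2}$ in the rank one case $r=1$, where the iteration is empty and the base case alone finishes the argument.
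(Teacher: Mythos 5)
Your proposal is correct and follows essentially the same route as the paper's own proof: compute $I(\mathrm{core}(M))=I\bigl(\mathrm{adj}(I(M))M\bigr)=\mathrm{adj}(I(M))^{r}I(M)$ via Theorem \ref{core adj} and Lemma \ref{lem I(M)} (using that $\mathrm{adj}(I(M))$ is $\mathfrak m$-primary or $R$, so it equals its own $I(-)$), then telescope with Lemma \ref{skoda} down to $\mathrm{adj}\bigl(I(M)\,\mathrm{adj}(I(M))\bigr)$ and finish with Theorem \ref{core adj in HS} and Lemma \ref{adj(core)=adj^2}. Your explicit free/non-free case split is the only (cosmetic) difference from the paper, which absorbs the free case by allowing the ideals to equal $R$ throughout.
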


\begin{proof}
By Theorem \ref{core adj} and Lemma \ref{lem I(M)},
\begin{align*}
I(\mathrm{core}(M))=I\bigl(\mathrm{adj}(I(M))M\bigr)=\Bigl(I\bigl(\mathrm{adj}(I(M))\bigr)\Bigr)^rI(M).
\end{align*}
Since $I(M)$ is an $\mathfrak m$-primary ideal or $R$, $\mathrm{adj}(I(M))$ is an $\mathfrak m$-primary ideal or $R$ by Proposition \ref{basic properties}.
Therefore $I\bigl(\mathrm{adj}(I(M))\bigr)=\mathrm{adj}(I(M))$.
Note that $I(M)$ is an integrally closed ideal by Proposition \ref{properties of module in K}.
By Theorem \ref {core adj in HS},  Lemma \ref{skoda} and Lemma \ref{adj(core)=adj^2}, we have
\begin{align*}
\mathrm{adj}(I(\mathrm{core}(M)))&=\mathrm{adj}\Bigl(I(M)\bigl(\mathrm{adj}(I(M))\bigr)^r\Bigr)\\
&=\Bigl(\mathrm{adj}(I(M))\Big)^{r-1}\mathrm{adj}\Bigl(I(M)\mathrm{adj}(I(M))\Bigr)\\
&=\Bigl(\mathrm{adj}(I(M))\Big)^{r+1}.
\end{align*}

\end{proof}

Now we introduce some notation: $\mathrm{core}^1(M)=\mathrm{core}(M)$ and, for $n>1$, 
$\mathrm{core}^n(M)=\mathrm{core}^{n-1}(\mathrm{core}(M))$.

The following proposition is a generalization of Proposition 4.7 in \cite{HS}.
\begin{prop}
Let $(R,\mathfrak m)$ be a $2$-dimensional regular local ring with infinite residue field and $M$ be a finitely generated integrally closed torsion-free $R$-module of  rank $r$.
Then 
$$\mathrm{core}^n(M)=\Bigl(\mathrm{adj}(I(M))\Bigr)^{\frac{(r+1)^n}{r}-\frac{1}{r}}M.$$
In particular,
$$\mathrm{core}\bigl(\mathrm{core}(M)\bigr)=\Bigl(\mathrm{adj}(I(M))\Bigr)^{r+2}M.$$
\end{prop}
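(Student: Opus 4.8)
The plan is to prove the formula by induction on $n$, carrying along the explicit shape $\mathrm{core}^n(M)=\mathfrak b^{e_n}M$, where I write $\mathfrak b=\mathrm{adj}(I(M))$ and $e_n=\frac{(r+1)^n-1}{r}$. The case $n=1$ is precisely Theorem \ref{core adj} together with $e_1=1$. I would first dispose of the free case: if $M$ is free then $I(M)=R$, so $\mathfrak b=R$ and $\mathrm{core}^n(M)=M=\mathfrak b^{e_n}M$ for every $n$; hence I may assume $M$ non-free, so that $I(M)$ is $\mathfrak m$-primary and integrally closed by Proposition \ref{properties of module in K}, and $\mathfrak b$ is integrally closed by Proposition \ref{basic properties}.

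For the inductive step, assume $\mathrm{core}^n(M)=\mathfrak b^{e_n}M$. Before iterating $\mathrm{core}$ I must know this module is integrally closed: powers of the integrally closed ideal $\mathfrak b$ remain integrally closed by repeated application of Theorem \ref{thm 5.2 in K}, and then $\mathfrak b^{e_n}M$ is integrally closed by the ``in particular'' clause of that theorem. Thus Theorem \ref{core adj} applies and gives
$$\mathrm{core}^{n+1}(M)=\mathrm{adj}\bigl(I(\mathrm{core}^n(M))\bigr)\,\mathrm{core}^n(M).$$
To evaluate the ideal of minors I would invoke Lemma \ref{lem I(M)} with the $\mathfrak m$-primary ideal $\mathfrak b^{e_n}$, using $I(\mathfrak b^{e_n})=\mathfrak b^{e_n}$, to get $I(\mathrm{core}^n(M))=I(\mathfrak b^{e_n}M)=\mathfrak b^{e_n r}I(M)$.

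The crux is the adjoint computation $\mathrm{adj}\bigl(\mathfrak b^{e_n r}I(M)\bigr)=\mathfrak b^{e_n r+1}$, which runs exactly as in the proof of Lemma \ref{adj(core)=adj^r+1}: peeling off $\mathfrak b$ one factor at a time by Lemma \ref{skoda} yields $\mathrm{adj}(I(M)\mathfrak b^{e_n r})=\mathfrak b^{e_n r-1}\mathrm{adj}(I(M)\mathfrak b)$, and since $I(M)$ is $\mathfrak m$-primary integrally closed one has $I(M)\mathfrak b=\mathrm{core}(I(M))$ by Theorem \ref{core adj in HS}, whence $\mathrm{adj}(I(M)\mathfrak b)=\mathfrak b^2$ by Lemma \ref{adj(core)=adj^2}. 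Substituting back gives
$$\mathrm{core}^{n+1}(M)=\mathfrak b^{e_n r+1}\,\mathfrak b^{e_n}M=\mathfrak b^{(r+1)e_n+1}M,$$
so the exponents satisfy $e_{n+1}=(r+1)e_n+1$. With $e_1=1$ this recursion solves to $e_n=\frac{(r+1)^n-1}{r}=\frac{(r+1)^n}{r}-\frac1r$, which is the claimed formula; taking $n=2$ gives $e_2=r+2$ and hence $\mathrm{core}\bigl(\mathrm{core}(M)\bigr)=\bigl(\mathrm{adj}(I(M))\bigr)^{r+2}M$.

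I expect the main obstacle to be bookkeeping the hypotheses that keep the machinery applicable at each stage — namely that every intermediate module $\mathrm{core}^n(M)=\mathfrak b^{e_n}M$ is an integrally closed ideal times an integrally closed module (so Theorem \ref{core adj} and Lemma \ref{lem I(M)} apply) and that the adjoint of $I(M)\mathfrak b$ genuinely collapses to $\mathfrak b^2$ — rather than the final arithmetic of solving the linear recurrence.
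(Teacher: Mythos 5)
Your proposal is correct and takes essentially the same route as the paper: an induction on $n$ with base case Theorem \ref{core adj}, integral closedness of the iterated cores supplied by Theorem \ref{thm 5.2 in K}, and the adjoint collapse obtained from Lemma \ref{lem I(M)}, Lemma \ref{skoda}, Theorem \ref{core adj in HS} and Lemma \ref{adj(core)=adj^2}. The only difference is organizational: the paper roots its induction at $\mathrm{core}(M)$ (writing $\mathrm{core}^n(M)=\mathrm{core}^{n-1}\bigl(\mathrm{core}(M)\bigr)$) so that Lemma \ref{adj(core)=adj^r+1} applies verbatim with exponent $r$, whereas you peel the outermost core and therefore re-derive that lemma inline in the slightly more general form $\mathrm{adj}\bigl(I(M)\,\mathrm{adj}(I(M))^{e_n r}\bigr)=\mathrm{adj}(I(M))^{e_n r+1}$.
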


\begin{proof}
If $n=1$, this is just Theorem \ref{core adj}.
Now let $n>1$ and assume that the proposition holds for $n-1$.
Note that $\mathrm{core}(M)=\mathrm{adj}(I(M))M$ is integrally closed by Theorem \ref{thm 5.2 in K}.
Then by  hypothesis and Lemma \ref{adj(core)=adj^r+1}
\begin{align*}
\mathrm{core}^{n}(M)&=\mathrm{core}^{n-1}\bigl(\mathrm{core}(M)\bigr)\\
&=\Bigl(\mathrm{adj}\bigl(I(\mathrm{core}(M))\bigr)\Bigr)^{\frac{(r+1)^{n-1}}{r}-\frac{1}{r}}\mathrm{core}(M)\\
&=\Bigl(\mathrm{adj}(I(M))\Bigr)^{\frac{(r+1)^n}{r}-\frac{1}{r}}M.
\end{align*}

\end{proof}

%%%%%%%%%%%%%%%%
% bibliography
%%%%%%%%%%%%%%%

% Set bibliography items using the "thebibliography" environment  and following
% the style used by the AMS journals. 
%
% If the bibliography is generated by a bibtex database, use "amsplain" or
% "amsalpha" as bibliography style

\end{document}